\theoremstyle{plain}
\newtheorem{theorem}{Theorem}[section]
\newtheorem{definition}[theorem]{Definition}
\newtheorem{corollary}[theorem]{Corollary}
\newtheorem{lemma}[theorem]{Lemma}
\newtheorem{proposition}[theorem]{Proposition}
\newtheorem{remark}[theorem]{Remark}
\numberwithin{equation}{section}
\DeclareMathOperator*{\argmax}{arg\,max}
\DeclareMathOperator*{\dist}{dist}
\DeclareMathOperator*{\CTE}{CTE}
\DeclareMathOperator*{\CCTE}{CCTE}
\DeclareMathOperator*{\VaR}{VaR}
\DeclareMathOperator*{\Tube}{Tube}
\DeclareMathOperator*{\RMAE}{RMAE}
\DeclareMathOperator*{\Boule}{B}
\newcommand{\opnorm}[1]{{\left\vert\kern-0.25ex\left\vert\kern-0.25ex\left\vert #1 
    \right\vert\kern-0.25ex\right\vert\kern-0.25ex\right\vert}}
\title{\textbf{Estimation of the covariate conditional tail expectation : a depth-based level set approach}}
\author{Elisabeth Armaut \thanks{\noindent \texttt{armaut@unice.fr}}\,
Roland Diel {\footnote{\texttt{diel@unice.fr}}}\,
Thomas Laloë {\footnote{\texttt{laloe@unice.fr}}} \vspace{0.2cm} \\
\textit{Université de Nice Sophia-Antipolis, Laboratoire J-A} \\
\textit{Dieudonné, Parc Valrose, 06108 Nice Cedex 02, France}}
\begin{document}
\clearpage
\thispagestyle{empty}
\maketitle
\textbf{Abstract.}  
The aim of this paper is to study the asymptotic behavior of a particular multivariate risk measure, the Covariate-Conditional-Tail-Expectation ($\CCTE$), based on a multivariate statistical depth function. Depth functions have become increasingly powerful tools in nonparametric inference for multivariate data, as they measure a degree of centrality of a point with respect to a distribution. A multivariate risks scenario is then represented by a depth-based lower level set of the risk factors, meaning that we consider a non-compact setting. More precisely, given a probability measure $P$ on $\mathbb{R}^d$ and a depth function $D(\cdot,P)$, we are interested in the $\alpha$-lower level set $\mathcal{L}_D(\alpha):=\left\lbrace z \in \mathbb{R}^d : D(z,P) \leq \alpha \right\rbrace$. First, we present a \textit{plug-in} approach in order to estimate $\mathcal{L}_D(\alpha).$
In a second part, we provide a consistent estimator of our $\CCTE$ for a general depth function with a rate of convergence, and we consider the particular case of the \textit{Mahalanobis} depth. A simulation study complements the performances of our estimator.
\\

\textbf{Key-Words.} \textit{Plug-in} estimation, multivariate depth function, \textit{Mahalanobis} depth, risk theory. 
\newpage
\tableofcontents
\newpage 
\section{Introduction}
Risk theory is a branch of statistics which mainly focuses on unlikely events in the aim of managing the degree of uncertainty of such events and/or the associated costs. For instance, in the context of risk management in financial institutions such as banks or insurance companies, adverse consequences may occur and usually mean potential large losses on a portfolio of assets (\citet{Eberlein07}). In hydrology, risk could represent dam floods (and/or failures) and the associated risk factors could be the amount of rainfall, water flow...
%the amount of water which exceeds the maximum storage level of a dam.
Broadly speaking, a risk measure can be viewed as a mapping from a set of real-valued random variables to $\mathbb{R}^d$, $d \geq 1$, and is used to determine the amount of an asset (or assets/goods) to be kept in reserve in order to
cover for unexpected losses. One of the most studied risk measure in the univariate risk theory is the Conditional-Tail-Expectation ($\CTE$) (\citet{denuit2006actuarial}). It characterizes the conditional expected loss given that the loss exceeds a critical loss threshold. Formally, given a real random variable $X$ with distribution  function $F_X$, the $\CTE$ at level $\alpha \in (0,1)$ is defined as : \begin{equation} \label{CTEuni}
    {\CTE}_\alpha(X):= \mathbb{E}[X | X > \VaR(\alpha)], 
\end{equation}
where 
$$\VaR(\alpha):=\inf \left\lbrace t \in \mathbb{R} : F_X(t) \geq \alpha \right\rbrace$$
is the well-known \textit{Value at Risk} which corresponds to the univariate quantile of order $\alpha$ of $X$.
Thus, the $\CTE$ is nothing but the mathematical description of an average loss in the worst $100(1-\alpha) \%$ risk scenario.\\

However, considering a single risk factor is restrictive, as we can easily imagine correlated risk factors that could be studied together. One possibility is to consider quantile regions of the risk factors distribution. In the univariate case, a wide panel of univariate quantiles has been reviewed in the literature. When it comes to multivariate risks, the study of multivariate quantile regions has increasingly been pursued in the last decades as a tool to model multivariate risk regions, especially those based on a multivariate distribution function (\citet{belzunce2007,dehaan1995, cousin2013}), or on a depth function (\citet{ZS2000}).
In this way, several generalizations in higher dimension of the $\CTE$ emerged in the literature. In particular, one can mention the one proposed by \citet{DBLMDP13} :
given a random vector $\bm X \in \mathbb{R}^d$, $\bm X=(X_1,\cdots,X_d) \in \mathbb{R}^d$, $d \geq 1$, with mutivariate distribution function $F_{\bm X}: \mathbb{R}^d \to [0,1]$, a generalization of the $\CTE$ in higher dimension is defined by
\begin{align} \label{CTEmulti}
    {\CTE}_{\alpha}(\bm X) &= \mathbb{E}[ \bm X | \bm X \in \mathcal{L}_{F_{\bm X}}(\alpha)] \in \mathbb{R}^d, %=\begin{pmatrix} 
 %\mathbb{E}[X_1 | \bm X \in \mathcal{L}_{F_{\bm X}}(\alpha)]\\ \notag 
 %\vdots \\
% \mathbb{E}[X_d | \bm X \in \mathcal{L}_{F_{\bm X}}(\alpha)]
% \end{pmatrix} 
\end{align}
where $$\mathcal{L}_{F_{\bm X}}(\alpha):=\left\lbrace x \in \mathbb{R}^d : F_{\bm X}(x) \geq \alpha \right\rbrace,~ \alpha \in (0,1),$$ is the $\alpha$-upper level set of $F_{\bm X}$, which is one generalization of the univariate quantile region $[\VaR_\alpha(X),+\infty)$ in dimension $d \geq 1$. %Note that for $d=1$, $\mathcal{L}_{F_{\bm X}}(\alpha)$ coincides withe (\ref{CTEmulti}) coincides with the quantile region in (\ref{CTEuni}).
\newline

Another interesting problem is to study the behavior of an expected cost $Y \in \mathbb{R}$ associated to $d \geq 1$ risk factors which are heterogeneous in nature.
In econometrics, for instance, one can be interested in an average return (which measures the performance of a portfolio for a certain period of time) with respect to $d \geq 1$ risk factors $\bm X \in \mathbb{R}^d$. On another note, one can also be interested in the impact of climate change (via $d$ risk factors) on high temperatures.
To address this, \citet*{DBLS2015} proposed studying the behavior of a covariate variable $Y$ on the level sets of the distribution of a $d$-dimensional vector of risk factors $\bm X$. More precisely, they define and estimate the multivariate Covariate-conditional-Tail-Expectation ($\CCTE$) defined by \begin{equation} \label{CCTEmulti}
 {\CCTE}_{\alpha}(Y,\bm X):=\mathbb{E}[Y|\bm X \in \mathcal{L}_{F_{\bm X}}(\alpha)], ~ \alpha \in (0,1).
\end{equation}

However, this $\CCTE$ based on the distribution function only considers canonical directions. For instance, it could consider an average cost associated to high or low temperatures, but not to high and low temperatures at the same time. Therefore, instead of studying the level sets $\mathcal{L}_{F_{\bm X}}(\alpha)$, \citet*{TRDiB20} studied the level sets $\mathcal{L}_{F_{R\bm X}}(\alpha)$ of a rotation $R$ of the distribution. In other words, oriented orthant are considered in order to investigate other risk regions. We propose here a more general approach, consisting in replacing the distribution function by a depth function (see \citet*{ZS2000}).\newline

Roughly speaking, a depth function is a mapping $$ D : \mathbb{R}^d \times \mathscr{P}(\mathbb{R}^d) \to \mathbb{R}_+$$
which provides a $P_{\bm X}$-based \textit{center}-outward ordering of points in $ \mathbb{R}^d,$ where $ \mathscr{P}(\mathbb{R}^d)$ denotes the set of all probability measures on $\mathbb{R}^d.$ Thus, in order to deal with risk regions, we will consider the $lower$-level sets of a depth function and propose a depth-based $\CCTE$ defined by :
\begin{align}\label{CCTE}
    {\CCTE}_{D,\alpha}(Y,\bm X):=\mathbb{E}[Y|\bm X \in \mathcal{L}_D(\alpha)],~\alpha>0,
\end{align}
where $\mathcal{L}_D(\alpha) = \left\lbrace x \in \mathbb{R}^d : D(x,P_{\bm X}) \leq \alpha \right\rbrace$ is the $\alpha$-depth based lower level set.\\

The paper is organized as follows. In Section \ref{sectiondefnot}, we introduce some notations, and tools and the mathematical definition of a depth function. Section \ref{mainresults} is devoted to our main results : in Section \ref{general CCTE section}, a construction and consistency and convergence rates of an estimator of our $\CCTE_D$ are given in a general setting, in Section \ref{Hausdorff section} we study the general asymptotic behavior of our estimator of the level set $\mathcal{L}_D(\alpha)$, and in Section \ref{MHD section} we provide consistency results and convergence rates of the $\CCTE_D$ in the particular case of $Mahalanobis$ depth. Illustrations and simulations are presented in Section \ref{SectionSimuIllu}. %Section \ref{SectionSimudata} is dedicated to simulated data while in Section \ref{Section real data} a real example is studied. 
Finally, proofs  are postponed to section \ref{proofs section}.
\section{Notations and definitions} \label{sectiondefnot}

This section is dedicated to introducing some useful notations and tools.
\subsection{General Notations}

Let $\varnothing$ be the empty set, $\mathbb{N}^*=\mathbb{N} \backslash \left\lbrace 0 \right\rbrace$ be the set of positive integers, and $\mathscr{P}:=\mathscr{P}(\mathbb{R}^d)$ be the set of all probability measures on $\mathbb{R}^d,$ $d \geq 1.$ When dealing with random variables, we assume that they are defined on a common underlying probability space $(\Omega, \mathscr{A},\mathbb{P}).$% We denote $\bm X := (X_1, X_2, \hdots, X_d)$ a random vector with distribution an element of $\mathscr{P}$.  
Given a r.v $\bm X$ with distribution $P_{ \bm X}:= P \in \mathscr{P}$ and an i.i.d sample $S_n:=(\bm{X_i})_{1 \leq i \leq n}$ of size $n \in \mathbb{N}^*$ with distribution $P$ and independent of $\bm X$, we denote by $P_n := \sum_{i=1}^n\delta_{\bm{X_i}}$ the empirical measure based on this finite sample. For notational convenience, we denote by $\mathbb{E}_{P}$ the mathematical expectation under $P$, and by $\mathbb{E}_{S_n}[Z]:=\mathbb{E}[Z | \bm X_1, \cdots, \bm X_n]$ the conditional expectation of $Z$ knowing $ \bm X_1, \cdots, \bm X_n$. Moreover, denoting $\Phi(A) := P_{\bm X}[A]=P[A]$ for any Borel-set $A \subset \mathbb{R}^d$, we denote by $P_{S_{n}}[A] := \Phi[A(X_1,\cdots,X_n)]$, where $A := A(X_1,\cdots,X_n)$ is a subset of $\mathbb{R}^d$ which depends on the data $X_1,\cdots,X_n$. Thus, $A(X_1,\cdots,X_n)$ is a random subset, so that $P_{S_{n}}[A]$ is a r.v. Furthermore, for any real number $q >0$, let $ \mathbb{L}^q(\Omega):=\mathbb{L}^q(\Omega,\mathscr{A},\mathbb{P})$ denote the vector space of real-valued random variables $U$ for which $\mathbb{E}[|U|^q] < +\infty$. \\

Unambiguously, we denote by $\| \cdot \|$ the Euclidean norm in $\mathbb{R}^d$ and by $\opnorm{\cdot}$ the matrix norm induced by the Euclidean norm in $\mathbb{R}^d,$ i.e for any $d \times d$ real matrix $M$, $$ \opnorm{M}:= \sup_{x \neq 0} \frac{\|M x \|}{\|x\|}=\sup_{\|x\|=1} \|M x \|.$$ 
%We also denote by $\Boule(x,r)$ the (open) ball centered on $x \in \mathbb{R}^d$
For $p \in [1,+\infty]$, we also denote by \begin{align*}
& \| f \|_{p,\lambda} := \left(\int_{\mathbb{R}^d} |f(x)|^pdx\right)^{\frac{1}{p}} \text{~~for~}  p < +\infty, \text{~and}  \\
&  \|f\|_{\infty,\mathbb{R}^d} :=\text{ess sup}_{x \in \mathbb{R}^d}|f(x)| \text{~~for~}  p=+\infty,
\end{align*} the $\mathbb{L}^p(\mathbb{R}^d,\lambda_d)$ norm of $f$ w.r.t the Lebesgue measure on $\mathbb{R}^d$.\\

We recall that for A and B non-empty compact sets in $(\mathbb{R}^d,\| \cdot \|)$ the Hausdorff distance between A and B is defined by \begin{center}
    $d_H(A,B)=\displaystyle \sup \left( \sup_{a \in A} \dist(a,B), \sup_{b \in B} \dist(b,A) \right)$,
\end{center}
where, $$\dist(x,A):=\displaystyle \inf_{a \in A}:= \|a-x\|.$$

Now, given two sequences of real numbers $(u_n)_{n \geq 1}$ and $(v_n)_{n \geq 1}$, we recall that $u_n = O_{n}(v_n)$ means that there exist a constant $C>0$ and $N \in \mathbb{N}^*$ s.t. for all $n \geq N,$ $|u_n| \leq C |v_n|$. \\

Finally, let $(X_{n})_{n \in (\mathbb{N}^{*})^r}$, $r \geq 1$, be a set of random variables and $(u_{n})_{n \in (\mathbb{N}^{*})^r}$ be a deterministic set of positive real numbers, we recall the following classical stochastic dominance \\

$\bm{ X_n = \mathcal{O}_{\mathbb{P},n}  (u_{n})  \stackrel{\text{def}}{\Longleftrightarrow}}$
\begin{align*}
 \forall~ \varepsilon>0, & ~ \exists ~ M_\varepsilon >0, ~ \exists~ N_\varepsilon \geq 1,~\forall ~ n:=(n_1,\cdots,n_r) \in (\mathbb{N}^{*})^r, \\
  & \min_{ 1 \leq i \leq r} n_i \geq N_\varepsilon \Rightarrow \mathbb{P}[X_{n} \geq M_\varepsilon \cdot u_{n}] \leq \varepsilon.
\end{align*}
\subsection{Depth functions}
In this section, we formally introduce the definition of a statistical multivariate depth function as in \citet*{ZS2000} (Definition 2.1 in \citep{ZS2000}). 
Let us begin with some common multivariate symmetry notions which have been widely used in the literature (the interested reader can refer to \citet{ZS2000}, \citet{liu1990}, and \citet{Beran1997}). In what follows, we review some standard symmetry notions.\\

$\bullet$ $C$-symmetry : a random vector $\bm X \in \mathbb{R}^d$ is \textit{centrally-symmetric} (or $C$-symmetric) about $\theta \in \mathbb{R}^d$ if $\bm X-\theta \stackrel{\rm d}{=} \theta - \bm X$.\\

$\bullet$ $A$-symmetry (Liu, 1990) : $\bm X$ is said to be \textit{angularly-symmetric} (or $A$-symmetric) about $\theta$, if $(\bm X-\theta)/ \| \bm X - \theta \|$ is centrally symmetric about the origin.\\

$\bullet$ $H$-symmetry (Zuo and Serfling, 2000) : $\bm X$ is said to be \textit{halfspace-symmetric} (or $H$-symmetric) about $\theta$ if $P_{\bm X}[H] \geq 1/2$ for every closed halfspace $H$ containing $\theta$. \\

Note that, it is easily established that $C$-symmetry $\Rightarrow$ $A$-symmetry $\Rightarrow$ $H$-symmetry.
In a natural terminology, $\theta$ is called the \textit{center} of the distribution $P_{\bm X}$.

\begin{definition}[\citet{ZS2000}] \label{defprofmulti}
A statistical depth function is a mapping $D: \mathbb{R}^d \times \mathscr{P} \to \mathbb{R}$ which is bounded, non negative, measurable in its first argument and satisfying :\\
\textbf{(D1) Affine invariance} : for any $P_{\bm X} \in \mathscr{P}$, $b \in \mathbb{R}^d$, and any invertible size $d$ matrix $A$, $D(Ax+b,P_{A \bm X+b})=D(x,P_{\bm X})$\\
\textbf{(D2) Maximality at center} : for any $P_{\bm X} \in  \mathscr{P}$ having a unique \textit{center} $\theta \in \mathbb{R}^d$ (for one of the symmetry notions previously presented), $D(\theta,P_{\bm X})= \sup_{x \in \mathbb{R}^d}D(x,P_{\bm X})$\\
\textbf{(D3) Monotonicity relative to deepest point} :  for any $P_{\bm X}$ having deepest point $\theta$ i.e. $D(\theta,P_{\bm X})=\sup_{x \in \mathbb{R}^d}D(x,P_{\bm X})$, $D(x,P_{\bm X}) \leq D(\alpha x + (1-\alpha)\theta,P_{\bm X})$ holds for $\alpha \in [0,1]$\\
\textbf{(D4) Vanishing at infinity :} $D(x,P_{\bm X}) \to 0$ as $\|x\| \to \infty$, for each $P_{\bm X} \in \mathscr{P}$.
\end{definition}

Informally, the first property of a depth $\bm{(D1)}$ suggests that the depth of a point $x \in \mathbb{R}^d$ does not depend on the underlying coordinate system. As far as property $\bm{(D2)}$ is concerned, for a distribution having a unique "center" i.e., the point of symmetry with respect to some notion of multivariate symmetry, the depth function should attain its maximum value at this center. % where $ \stackrel{d}{=}$ denotes "equal in distribution".
Property $\bm{(D3)}$ illustrates the fact that as a point $x \in \mathbb{R}^d$ moves away from the point of maximal depth (for instance the "center" of a distribution) along any fixed ray through the center, the depth at $x$ should decrease monotonically. Last but not least, property $\bm{(D4)}$ implies that the depth of a point $x$ approaches zero as $\|x\|$ approaches infinity.
Note that $\bm{(D3)}$ and $\bm{(D4)}$ mean that the upper level sets $$\left\lbrace  x \in \mathbb{R}^d : D(x,P_{\bm X}) \geq \alpha \right\rbrace,~\alpha >0,$$ are bounded and starshaped about the point of maximum depth.

\section{Main results} \label{mainresults}
In this section, we define a risk measure based on a general depth function, the Covariate-Conditional-Tail-Expectation ($\CCTE_D$) and we propose an estimator of the $\CCTE_D$ using a plug-in estimator of the level set. We study the asymptotic behavior of the $\CCTE_D$ when consistency of the level sets in terms of the probability under $P \in \mathscr{P}$ of the volume of the symmetric difference is provided.
\subsection{General Covariate-Conditional-Tail-Expectation consistency} \label{general CCTE section}
Fix a depth function $D: \mathbb{R}^d \times \mathscr{P} \to \mathbb{R}$ and a distribution $P \in \mathscr{P}$. We denote $$\alpha_{\max}(P):=\sup_{z \in \mathbb{R}^d} D(z,P)=\sup_{z \in \mathbb{R}^d} D(z).$$
Consider a couple $(Y, \bm X)$ s.t. $Y$ is a real random variable which is dependent on a random vector $\bm X \in \mathbb{R}^d$ with ditribution $P$.
In Definition \ref{defCCTE}, we formally define our $\CCTE_D$ and propose an estimator of the latter. For $n_1,~n_2 \geq 1$, let
 \begin{align} \notag
    \tilde{S}_{n_1}:=(\tilde{\bm X}_i)_{i=1,..,n_1} \text{~be an i.i.d~} n_1 \text{-sample from~} P, \text{~and} \\ \notag
    S_{n_2}:=((Y_i,\bm X_i))_{i=1,..,n_2} \text{~be an i.i.d~} n_2 \text{-sample from~} P_{(Y,\bm X)},
\end{align}
s.t. $\tilde{S}_{n_1}$ and $S_{n_2}$ are independent.
Now, we define the $\alpha$-\textit{lower level set} of $D$ and its plug-in estimator based on $\tilde{S}_{n_1}$ by 
\begin{align*}
  & \bm{ \mathcal{L}_D(\alpha)}=\bm{ \mathcal{L}_D(\alpha,P):=\left\lbrace  x \in \mathbb{R}^d : D(x,P) \leq \alpha \right\rbrace,} \text{and}\\
  & \bm{\mathcal{L}_{n_1}(\alpha):=\bm{\mathcal{L}_{D}(\alpha,\tilde{P}_{n_1})}=\left\lbrace  x \in \mathbb{R}^d : D_{n_1}(x):=D(x,\tilde{P}_{n_1}) \leq \alpha \right\rbrace},
\end{align*}
where $\tilde{P}_{n_1}$ is the empirical measure based on the sample $\tilde{S}_{n_1}:=(\bm{\tilde{X}_i})_{1 \leq i \leq n_1}$. \\
Finally, we provide the definition of our $\CCTE_D$ and its associated estimator.
\begin{definition}[Depth-based Covariate-Conditional-Tail-Expectation] \label{defCCTE}
Let $\bm X$ $\in \mathbb{R}^d$ be a random vector with distribution $P \in \mathscr{P}$ and $Y$ be an integrable real random variable (which is dependent on $\bm X$). Let $\alpha>0$ and assume $P[\mathcal{L}_D(\alpha)]>0$. \vspace{0.2cm}\newline
$(i)$ The depth-based Covariate-Conditionale-Tail-Expectation at level $\alpha$ is defined by : $${\CCTE}_{D,\alpha}(Y,\bm X):=\mathbb{E}[Y|\bm X \in \mathcal{L}_D(\alpha)].$$
$(ii)$ Its estimator based on the sample $S_{n_2}$ is given by :
\begin{align}\label{estmiateurCCTE}
     \widehat{\CCTE}_{D,\alpha}^{n_1,n_2}(Y,\bm X)  := \displaystyle \frac{ \displaystyle \sum_{i=1}^{n_2}Y_i \mathds{1}_{\bm X_i \in \mathcal{L}_{n_1}(\alpha)}}{\displaystyle  \sum_{i=1}^{n_2}\mathds{1}_{\bm X_i \in \mathcal{L}_{n_1}(\alpha)}},
\end{align}
with the convention $0/0 = 0.$
\end{definition}
%sous réserve que, pour $n_1,n_2 \geq 1$, $\mathbb{P}$-p.s. $\sum_{i=1}^{n_2}\mathds{1}_{\bm X_i \in \mathcal{L}_{n_1}(\alpha)}> 0,$ (ou de façon équivalente, si pour $n_1,n_2 \geq 1$, $\mathbb{P}$-p.s, il exite au moins un $1 \leq i \leq n_2$, tel que $\bm X_i$ est dans $\mathcal{L}_{n_1}(\alpha)$).
Our first result, namely Theorem \ref{vitesseCCTEproba}, links the rate of convergence of the $\CCTE_D$ to the one of the symmetric difference between the true and estimated $\alpha$-level set. We first state the following assumption describing a convergence rate for the level sets:\vspace{0.2cm}\newline
 \textbf{(H0):} there exists an increasing sequence of positive real numbers $(v_{n_1})_{n_1 \geq 1}$ s.t. 
    \begin{equation*}
       P_{\tilde{S}_{n_1}}[\mathcal{L}_{n_1}(\alpha) \Delta \mathcal{L}_D(\alpha)] = \mathcal{O}_{P,n_1}\left(v_{n_1}^{-1}\right),
    \end{equation*}
where $A\Delta B=(A\backslash B)\cup (B\backslash A)$ is the symmetric difference between $A$ and $B$. \\
In the spirit of \citet{DBLS2015}, Theorem \ref{vitesseCCTEproba} states that, under some conditions, the $\CCTE_D$ estimator is consistent with at most a convergence rate $O(\sqrt{n_2}).$ Remark that in Theorem \ref{vitesseCCTEproba}, the $r$-\textit{th} moment of $Y$ is only involved in the rate $(v_{n_1})$. Note that in our setting, the boundaries of the depth-based level sets at hand are compact, contrary to the non-compact setting studied in \citet{DBLS2015}.
\begin{theorem} \label{vitesseCCTEproba}
Let $\alpha >0$ and $P \in \mathscr{P}$. Assume
 $P[\mathcal{L}_D(\alpha)]>0 $, and $\textbf{(H0)}$ is satisfied and there exists $r \in [2,\infty]$ s.t.  $Y \in \mathbb{L}^r(\Omega).$
Then, it holds that
\begin{align*}
  \big| \widehat{\CCTE}_{D,\alpha}^{n_1,n_2}(Y,\bm X) -  {\CCTE}_{D,\alpha}(Y,\bm X)  \big| = \mathcal{O}_{P,n_1,n_2}\left(n_2^{-\frac{1}{2}} \vee v_{n_1}^{-(1-\frac{1}{r})}\right).
\end{align*}
\end{theorem}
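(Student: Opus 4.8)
The plan is to write the difference $\widehat{\CCTE}_{D,\alpha}^{n_1,n_2} - \CCTE_{D,\alpha}$ as a ratio and control numerator and denominator separately, isolating two sources of error: the statistical error from replacing $P_{(Y,\bm X)}$ by the empirical measure on $S_{n_2}$ (which should contribute the $n_2^{-1/2}$ term), and the geometric error from replacing $\mathcal{L}_D(\alpha)$ by $\mathcal{L}_{n_1}(\alpha)$ (which should contribute the $v_{n_1}^{-(1-1/r)}$ term via \textbf{(H0)} and Hölder's inequality). Concretely, set $p := P[\mathcal{L}_D(\alpha)] > 0$, write $N_{n_2} := \frac{1}{n_2}\sum_{i=1}^{n_2} Y_i \mathds{1}_{\bm X_i \in \mathcal{L}_{n_1}(\alpha)}$ and $\Delta_{n_2} := \frac{1}{n_2}\sum_{i=1}^{n_2} \mathds{1}_{\bm X_i \in \mathcal{L}_{n_1}(\alpha)}$, so that $\widehat{\CCTE} = N_{n_2}/\Delta_{n_2}$, and likewise $\CCTE_{D,\alpha} = \mathbb{E}[Y\mathds{1}_{\bm X \in \mathcal{L}_D(\alpha)}]/p$. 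Since $\tilde S_{n_1}$ and $S_{n_2}$ are independent, I would work conditionally on $\tilde S_{n_1}$, treating $\mathcal{L}_{n_1}(\alpha)$ as a fixed set, and then integrate out.

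First I would handle the denominator. Conditionally on $\tilde S_{n_1}$, $\mathbb{E}_{S_{n_2}}[\Delta_{n_2}] = P_{\tilde S_{n_1}}[\mathcal{L}_{n_1}(\alpha)]$, and by \textbf{(H0)} together with $|P_{\tilde S_{n_1}}[\mathcal{L}_{n_1}(\alpha)] - p| \le P_{\tilde S_{n_1}}[\mathcal{L}_{n_1}(\alpha)\Delta\mathcal{L}_D(\alpha)] = \mathcal{O}_{P,n_1}(v_{n_1}^{-1})$, the mean of $\Delta_{n_2}$ is within $\mathcal{O}_{P,n_1}(v_{n_1}^{-1})$ of $p$. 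The fluctuation of $\Delta_{n_2}$ around its conditional mean is $\mathcal{O}_{\mathbb{P},n_2}(n_2^{-1/2})$ by Chebyshev (the summands are bounded). Hence $\Delta_{n_2} = p + \mathcal{O}_{\mathbb{P},n_1,n_2}(n_2^{-1/2} \vee v_{n_1}^{-1})$, so $\Delta_{n_2}$ is bounded away from $0$ with probability tending to one, which makes the $0/0$ convention harmless and lets me invert it.

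Next the numerator: conditionally on $\tilde S_{n_1}$, $\mathbb{E}_{S_{n_2}}[N_{n_2}] = \mathbb{E}[Y\mathds{1}_{\bm X \in \mathcal{L}_{n_1}(\alpha)}]$, and the difference from $\mathbb{E}[Y\mathds{1}_{\bm X \in \mathcal{L}_D(\alpha)}]$ is bounded by $\mathbb{E}\big[|Y|\,\mathds{1}_{\bm X \in \mathcal{L}_{n_1}(\alpha)\Delta\mathcal{L}_D(\alpha)}\big]$; applying Hölder with exponents $r$ and $r/(r-1)$ gives the bound $\|Y\|_r \cdot \big(P_{\tilde S_{n_1}}[\mathcal{L}_{n_1}(\alpha)\Delta\mathcal{L}_D(\alpha)]\big)^{1-1/r} = \mathcal{O}_{P,n_1}(v_{n_1}^{-(1-1/r)})$ — this is exactly where the exponent $1-1/r$ and the hypothesis $Y\in\mathbb{L}^r$ enter (with the obvious modification $r=\infty$ giving the trivial bound by $\|Y\|_\infty$ times the measure of the symmetric difference). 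The fluctuation of $N_{n_2}$ around its conditional mean is $\mathcal{O}_{\mathbb{P},n_2}(n_2^{-1/2})$: since $r\ge 2$, $\mathrm{Var}(Y\mathds{1}_{\{\cdot\}}) \le \mathbb{E}[Y^2] < \infty$, so Chebyshev applies. Thus $N_{n_2} = \mathbb{E}[Y\mathds{1}_{\bm X\in\mathcal{L}_D(\alpha)}] + \mathcal{O}_{\mathbb{P},n_1,n_2}(n_2^{-1/2} \vee v_{n_1}^{-(1-1/r)})$.

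Finally I would combine the two expansions via the elementary identity $\frac{N}{\Delta} - \frac{a}{p} = \frac{N-a}{\Delta} + \frac{a}{p}\cdot\frac{p-\Delta}{\Delta}$ (valid once $\Delta$ is bounded below), which expresses the target error as a linear combination of the numerator error $\mathcal{O}(n_2^{-1/2}\vee v_{n_1}^{-(1-1/r)})$ and the denominator error $\mathcal{O}(n_2^{-1/2}\vee v_{n_1}^{-1})$, divided by a quantity bounded away from zero; since $v_{n_1}^{-1} \le v_{n_1}^{-(1-1/r)}$ (as $r\ge 2$ and $v_{n_1}$ increasing, eventually $v_{n_1}\ge 1$), the dominating rate is $n_2^{-1/2}\vee v_{n_1}^{-(1-1/r)}$, giving the claim. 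The one point requiring care — and the main obstacle — is the bookkeeping between the conditional ($\mathcal{O}_{\mathbb{P},n_2}$, given $\tilde S_{n_1}$) and unconditional ($\mathcal{O}_{P,n_1}$) stochastic-order statements: I would make explicit a small lemma that if $X_{n_1,n_2}$ is $\mathcal{O}_{\mathbb{P},n_2}(u_{n_2})$ conditionally on $\tilde S_{n_1}$ uniformly in a high-probability sense, and $Y_{n_1}$ is $\mathcal{O}_{P,n_1}(w_{n_1})$, then $X_{n_1,n_2} + Y_{n_1} = \mathcal{O}_{\mathbb{P},n_1,n_2}(u_{n_2}\vee w_{n_1})$, unwinding the $\varepsilon$-$M_\varepsilon$-$N_\varepsilon$ definition given in the Notations section; everything else is Chebyshev and Hölder.
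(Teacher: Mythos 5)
Your proposal is correct and follows essentially the same route as the paper: the paper telescopes through the intermediate quantity $\mathbb{E}_{\tilde S_{n_1}}[Y\,|\,\bm X\in\mathcal{L}_{n_1}(\alpha)]$ (Lemmas 5.2 and 5.3) whereas you expand the ratio difference directly, but the substantive ingredients are identical — H\"older with exponents $r$ and $r/(r-1)$ applied to $\mathbb{E}\bigl[|Y|\mathds{1}_{\bm X\in\mathcal{L}_{n_1}(\alpha)\Delta\mathcal{L}_D(\alpha)}\bigr]$ to produce the $v_{n_1}^{-(1-1/r)}$ rate, Chebyshev conditionally on $\tilde S_{n_1}$ for the $n_2^{-1/2}$ fluctuations, and the observation that the denominator is bounded away from zero with probability tending to one so the $0/0$ convention is harmless.
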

%Note that the convergence rates are obtained regardless of the order between $n_1$ and $n_2$ when $n_1,n_2$ tend to infinity.
 Furthermore, following the approach of \citet{DBLS2015}, Assumption \textbf{(H0)} can be replaced by Assumption \textbf{(H1)} and one can derive a similar result to Theorem \ref{vitesseCCTEproba} (c.f. Corollary \ref{corSymDiff}) : \vspace{0.1cm}\newline
\textbf{(H1):} $(i)$ there exists an increasing sequence of positive real numbers $(v_{n_1})_{n_1 \geq 1}$ s.t. \begin{equation*}
     \lambda_d(\mathcal{L}_{n_1}(\alpha)\Delta \mathcal{L}_D(\alpha))=\mathcal{O}_{P,n_1}\left(v_{n_1}^{-1}\right), \text{~and}
 \end{equation*}
\hspace{1cm}$(ii)$ $P$ is absolutely continuous with density function $f \in \mathbb{L}^p(\mathbb{R}^d,\lambda_d)$ for some $p \in (1,+\infty].$
\begin{corollary}\label{corSymDiff}
Let $\alpha >0$ and $P \in \mathscr{P}$. Assume that
 $P[\mathcal{L}_D(\alpha)]>0,$ and that there exists $r \in [2,+\infty]$ s.t. $Y \in \mathbb{L}^r(\Omega).$
Let $(v_{n_1})_{n_1}$ satisfy \textbf{(H1)}, then
 \begin{align*}
 \big| \widehat{\CCTE}_{D,\alpha}^{n_1,n_2}(Y,\bm X) -  {\CCTE}_{D,\alpha}(Y,\bm X)  & \big| = \mathcal{O}_{P,n_1,n_2}\left(n_2^{-\frac{1}{2}}\vee v_{n_1}^{-\left(1-\frac{1}{r}\right)\left(1-\frac{1}{p}\right)}\right).
\end{align*} 
\end{corollary}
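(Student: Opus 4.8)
The plan is to deduce Corollary~\ref{corSymDiff} directly from Theorem~\ref{vitesseCCTEproba}, by showing that Assumption~\textbf{(H1)} implies Assumption~\textbf{(H0)} with the sequence $(v_{n_1})$ replaced by $(v_{n_1}^{1-1/p})$, and then invoking the theorem with this modified rate. No new analytic ingredient is needed beyond Hölder's inequality and the elementary stability properties of the stochastic-dominance symbol $\mathcal{O}_{P,n}$ recalled in Section~\ref{sectiondefnot}.

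First I would fix a realization of the sample $\tilde{S}_{n_1}$, so that $A_{n_1}:=\mathcal{L}_{n_1}(\alpha)\Delta\mathcal{L}_D(\alpha)$ is a deterministic Borel subset of $\mathbb{R}^d$. Since, by \textbf{(H1)}$(ii)$, $P$ has density $f\in\mathbb{L}^p(\mathbb{R}^d,\lambda_d)$, one may write $P_{\tilde{S}_{n_1}}[A_{n_1}]=\int_{A_{n_1}}f\,d\lambda_d$, and Hölder's inequality with conjugate exponents $p$ and $q:=p/(p-1)$ (with the usual convention $q=1$ when $p=+\infty$) gives
\begin{equation*}
  P_{\tilde{S}_{n_1}}[A_{n_1}] \;\leq\; \|f\|_{p,\lambda}\,\cdot\,\lambda_d(A_{n_1})^{\,1-\frac{1}{p}}.
\end{equation*}
The constant $\|f\|_{p,\lambda}$ is finite by \textbf{(H1)}$(ii)$, and this inequality holds for every realization of $\tilde{S}_{n_1}$, hence as a pathwise inequality between the random variables $P_{\tilde{S}_{n_1}}[A_{n_1}]$ and $\lambda_d(A_{n_1})$.

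Next, from \textbf{(H1)}$(i)$ we have $\lambda_d(A_{n_1})=\mathcal{O}_{P,n_1}(v_{n_1}^{-1})$, and since $p>1$ the map $x\mapsto x^{1-1/p}$ is increasing on $\mathbb{R}_+$; applying it to a nonnegative quantity that is $\mathcal{O}_{P,n_1}(v_{n_1}^{-1})$ yields, directly from the definition of $\mathcal{O}_{P,n}$, a quantity that is $\mathcal{O}_{P,n_1}(v_{n_1}^{-(1-1/p)})$. Combining this with the pathwise Hölder bound and absorbing the deterministic factor $\|f\|_{p,\lambda}$, we obtain $P_{\tilde{S}_{n_1}}[A_{n_1}]=\mathcal{O}_{P,n_1}(v_{n_1}^{-(1-1/p)})$. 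Thus Assumption~\textbf{(H0)} holds with the sequence $\tilde{v}_{n_1}:=v_{n_1}^{\,1-1/p}$, which is positive and increasing because $v_{n_1}$ is positive and increasing and $1-1/p\in(0,1]$. Applying Theorem~\ref{vitesseCCTEproba} with this sequence $\tilde{v}_{n_1}$ (the hypotheses $P[\mathcal{L}_D(\alpha)]>0$ and $Y\in\mathbb{L}^r(\Omega)$ being assumed in the corollary) gives the bound $\mathcal{O}_{P,n_1,n_2}\big(n_2^{-1/2}\vee \tilde{v}_{n_1}^{-(1-1/r)}\big)$, and substituting $\tilde{v}_{n_1}^{-(1-1/r)}=v_{n_1}^{-(1-1/r)(1-1/p)}$ yields exactly the claimed rate.

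The only genuinely delicate point is the bookkeeping around the randomness of $A_{n_1}$: because the symmetric difference is a data-dependent set, the Hölder estimate must first be read as a pathwise inequality between random variables before the stochastic-order conclusion can be drawn, and one must check that $\mathcal{O}_{P,n}$ is preserved under multiplication by a fixed constant and under the increasing power map $x\mapsto x^{1-1/p}$ — both immediate from the definition. Everything else is a direct substitution into Theorem~\ref{vitesseCCTEproba}.
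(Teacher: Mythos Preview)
Your proposal is correct and follows essentially the same route as the paper: both reduce the corollary to Theorem~\ref{vitesseCCTEproba} by verifying that \textbf{(H1)} implies \textbf{(H0)} for the sequence $v_{n_1}^{\,1-1/p}$, using H\"older's inequality to bound $P_{\tilde S_{n_1}}[A_{n_1}]$ by $\|f\|_{p,\lambda}\,\lambda_d(A_{n_1})^{1-1/p}$ and then invoking \textbf{(H1)}$(i)$. Your write-up merely spells out more carefully the pathwise nature of the bound and the stability of $\mathcal O_{P,n}$ under the map $x\mapsto x^{1-1/p}$, but no new idea is involved.
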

\begin{proof}
It is sufficient to show that under the assumptions of Corollary \ref{corSymDiff}, assumption \textbf{(H0)} of Theorem \ref{vitesseCCTEproba} is satisfied by the sequence $(v_{n_1}^{1-1/p})_{n_1}$. When $ p \in (1,+\infty),$ it holds almost-surely
\begin{align*}
   v_{n_1}^{1-\frac{1}{p}}P_{\tilde{S}_{n_1}}[ \mathcal{L}_{n_1}(\alpha) \Delta \mathcal{L}_D(\alpha)] & =   v_{n_1}^{1-\frac{1}{p}} \int \mathds{1}_{\bm x \in \mathcal{L}_{n_1}(\alpha) \Delta \mathcal{L}_D(\alpha)}f(\bm x) d\bm x \\
   & \leq    v_{n_1}^{1-\frac{1}{p}} \lambda_d( \mathcal{L}_D(\alpha) \Delta \mathcal{L}_{n_1}(\alpha))^{1-\frac{1}{p}} \| f \|_{p,\lambda} \text{~~(Hölder)}\\
    & \leq  \underbrace{(v_{n_1} \lambda_d( \mathcal{L}_D(\alpha) \Delta \mathcal{L}_{n_1}(\alpha))^{1-\frac{1}{p}}}_\textrm{ $\mathcal{O}_{P,n_1}(1)$ (\textbf{H1})(i)} \underbrace{\| f \|_{p,\lambda}}_\textrm{$<\infty$ (\textbf{H1})(ii)}.
\end{align*}
When $p= +\infty$, the result is trivially valid by bounding $f$ by its essential supremum.
\end{proof}
\subsection{Consistency of general depth-based level sets in terms of the Hausdorff distance} \label{Hausdorff section}

In this section, the problem of interest is to study the conditions under which assumption \textbf{(H1)(i)} is satisfied : that would provide a rate of convergence for the general $\CCTE_D$. This means studying the rate of convergence of the volume of the symmetric difference between $\bm{\mathcal{L}_n}$ and $\bm{\mathcal{L}_{D}}$. It happens that, by controlling the Hausdorff distance between the respective boundaries  $\bm{\partial \mathcal{L}_D(\alpha)}$ and $ \bm{\partial \mathcal{L}_{n}(\alpha)}$ of those two sets, one can control the volume of the symmetric difference. Remark first that the Hausdorff distance between those sets is asymptotically well defined since the boundaries of the level sets are compact and non empty (c.f. Remark \ref{remarquebordalphamax} together with Remark \ref{remark2haussdistance}).
%Thus, we study the Hausdorff distance between the boundaries of the level sets, i.e. $\bm{\partial \mathcal{L}_D(\alpha)}$ and $ \bm{\partial \mathcal{L}_{n}(\alpha)}$, as a criteria of consistency. But first, we have to check that the Hausdorff distance between those sets is well defined. As a matter of fact, the Hausdorff distance $d_H(A,B)$ between $A$ and $B$ is well defined when $A$ and $B$ are non-empty closed sets (not necesssarily compact) but in this case the value $d_H(A,B)$ could be infinity. In our setting, the latter situation is not dealt with as the boundaries of the level sets are compact (c.f. Remark \ref{remarquebordalphamax}).
\begin{remark}\label{remarquebordalphamax}
On one hand, if $\alpha \in (0, \alpha_{\max}(P))$, the sets $\left\lbrace x : D(x,P) \geq \alpha \right\rbrace$ and $\left\lbrace x : D(x,P) < \alpha \right\rbrace$ are both non-empty, and since $D$ is vanishing at infinity, the empirical level set $\left\lbrace x : D(x,P_n) < \alpha \right\rbrace$, $n \geq 1$,  is non empty. Thus, $\mathcal{L}_D(\alpha)$ and its boundary $\partial \mathcal{L}_D(\alpha)$ are non-empty. On the other hand, if the empirical depth a.s. (almost-surely) converges pointwise to its true version on $\mathbb{R}^d$ then (cf. Theorem 4.1 in \citet{dyckerhoff2017}), for any $\alpha \in (0, \alpha_{\max}(P))$ and a.s. for any $n$ large enough, the upper-level set $\left\lbrace x : D(x,P_n) \geq \alpha \right\rbrace$
%(whose levels depend on $\alpha_{\max}(P_n)$) 
is non-empty as well. Thus, a.s, for $n$ large enough, $\partial \mathcal{L}_n(\alpha)$ is non-empty.
\end{remark}
Proposition \ref{PropCasalRodriguez} is a slight modification of the Proposition 3.1 in the Ph.D. thesis of \citet{RodCasal03} adapted to depth functions. 
%Elle permet, sous certaines conditions, d'assurer la validité de la seconde hypothèse du Théorème \ref{theoVitessedH}: celle-ci met en évidence le caractère localement lipschitzien de l'application $\tilde{\alpha} \mapsto \left\lbrace D= \tilde{\alpha} \right\rbrace$ au voisinage du niveau $\alpha.$
We introduce the following assumption \textbf{(L)} which characterizes the locally Lipschitz behavior of the mapping $\tilde{\alpha} \mapsto \left\lbrace D= \tilde{\alpha} \right\rbrace$ w.r.t the Hausdorff distance in a neighborhood of the fixed level $\alpha>0$ . \vspace{0.1cm}\newline
$(\textbf{L}):$
$\exists~A>0,~\exists~\gamma>0,~\forall ~ \beta>0, |\alpha-\beta| \leq \gamma \Rightarrow d_H( \left\lbrace D=\alpha \right\rbrace,  \left\lbrace D=\beta \right\rbrace) \leq \displaystyle A |\alpha-\beta|.$
\begin{proposition}\label{PropCasalRodriguez}
Let $D:\mathbb{R}^d \times \mathscr{P} \to \mathbb{R}_+$ be a multivariate depth function. Let $\alpha>0$, $0<\varepsilon<\alpha$ and $P \in \mathscr{P}$ be fixed. Denoting $D(x):=D(x,P)$, assume that \\
(i) the function $x \mapsto D(x)$ is of class $\mathscr{C}^2$ on the set $\mathcal{K}_\varepsilon(\alpha):=D^{-1}([\alpha-\varepsilon,\alpha+\varepsilon]),$ and \newline
(ii) $\displaystyle m_\nabla:=m_\nabla(\alpha,\varepsilon,P):=\inf_{x \in \mathcal{K}_\varepsilon(\alpha)}\|(\nabla D)_x\| >0$, where $(\nabla D)_x$ is the gradient of $D(\cdot)$ at x.\vspace{0.1cm}\newline
Then D satisfies Assumption \textbf{(L)}, with $A=\frac{2}{m_\nabla}.$
\end{proposition}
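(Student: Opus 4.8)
The plan is to show that for $\beta$ with $|\alpha - \beta| \leq \gamma$ (where $\gamma$ will be chosen to keep us inside the good set $\mathcal{K}_\varepsilon(\alpha)$, say $\gamma = \varepsilon/2$), every point of the level set $\{D = \alpha\}$ lies within distance $A|\alpha - \beta|$ of $\{D = \beta\}$, and symmetrically. By symmetry of the two roles it suffices to bound $\dist(x_0, \{D=\beta\})$ for an arbitrary $x_0$ with $D(x_0) = \alpha$. The idea is to flow from $x_0$ along the gradient direction of $D$ until the value reaches $\beta$: define $\varphi(t)$ by $\varphi(0) = x_0$ and $\varphi'(t) = \pm (\nabla D)_{\varphi(t)} / \|(\nabla D)_{\varphi(t)}\|^2$, choosing the sign according to whether $\beta > \alpha$ or $\beta < \alpha$. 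Then $\frac{d}{dt} D(\varphi(t)) = \pm 1$ as long as the trajectory stays in $\mathcal{K}_\varepsilon(\alpha)$, so after time $|\alpha - \beta|$ we reach a point $y$ with $D(y) = \beta$. The length of this path is $\int_0^{|\alpha-\beta|} \|\varphi'(t)\| \, dt = \int_0^{|\alpha-\beta|} \|(\nabla D)_{\varphi(t)}\|^{-1} \, dt \leq |\alpha - \beta| / m_\nabla$, hence $\dist(x_0, \{D = \beta\}) \leq \|x_0 - y\| \leq |\alpha-\beta|/m_\nabla$.

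In more detail, the $\mathscr{C}^2$ hypothesis on $\mathcal{K}_\varepsilon(\alpha)$ guarantees that the vector field $x \mapsto \pm(\nabla D)_x/\|(\nabla D)_x\|^2$ is $\mathscr{C}^1$ (the denominator is bounded below by $m_\nabla^2 > 0$), so the Cauchy–Lipschitz theorem gives a unique maximal solution $\varphi$. I need to check the trajectory does not leave $\mathcal{K}_\varepsilon(\alpha)$ before time $|\alpha - \beta|$: since $D(\varphi(t)) = \alpha \pm t$ stays in $[\alpha - \gamma, \alpha + \gamma] \subset [\alpha - \varepsilon, \alpha + \varepsilon]$ for $t \in [0, |\alpha-\beta|]$ and $|\alpha - \beta| \le \gamma$, the trajectory remains in the sublevel description of $\mathcal{K}_\varepsilon(\alpha)$; one should note that $\mathcal{K}_\varepsilon(\alpha)$ is closed and, by property \textbf{(D4)} (vanishing at infinity) together with $\alpha - \varepsilon > 0$, it is bounded, hence compact, so the solution cannot escape to infinity in finite time and is defined on the whole interval $[0, |\alpha-\beta|]$. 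This yields the one-sided bound; applying it with the roles of $\alpha$ and $\beta$ interchanged (using that $\{D = \beta\}$ is also contained in $\mathcal{K}_\varepsilon(\alpha)$ and $D$ is $\mathscr{C}^2$ there) gives the reverse inclusion, and taking the max of the two one-sided Hausdorff terms delivers $d_H(\{D = \alpha\}, \{D = \beta\}) \leq \frac{1}{m_\nabla}|\alpha - \beta| \leq \frac{2}{m_\nabla}|\alpha-\beta|$, establishing \textbf{(L)} with $A = 2/m_\nabla$. (The factor $2$ is slack; presumably it absorbs a boundary subtlety, e.g. allowing the terminal point to land on $\partial \mathcal{K}_\varepsilon(\alpha)$ or handling the edge case $|\alpha-\beta| = \gamma$ via a slightly enlarged constant.)

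The main obstacle is the bookkeeping around the boundary of $\mathcal{K}_\varepsilon(\alpha)$: one must be careful that the gradient flow is well-defined and stays in the region where $D$ is $\mathscr{C}^2$ and $\|\nabla D\|$ is bounded below, for the full time needed. Choosing $\gamma$ strictly smaller than $\varepsilon$ (and appealing to compactness of $\mathcal{K}_\varepsilon(\alpha)$, which follows from \textbf{(D4)}) is what makes this work; this is also presumably why the statement fixes $\varepsilon < \alpha$ and why $A = 2/m_\nabla$ rather than $1/m_\nabla$. A secondary point worth a line is measurability/nonemptiness of $\{D = \beta\}$ for $\beta$ near $\alpha$: since $D$ is continuous on $\mathcal{K}_\varepsilon(\alpha)$ and takes the values $\alpha - \varepsilon$ and $\alpha + \varepsilon$ there (as $\mathcal{K}_\varepsilon(\alpha) = D^{-1}([\alpha-\varepsilon,\alpha+\varepsilon])$ is the preimage of a closed interval and $D$ is not constant near level $\alpha$ because $\|\nabla D\| \geq m_\nabla > 0$), the intermediate value theorem along the flow already certifies that $\{D = \beta\}$ is nonempty, so the Hausdorff distance is well-posed.
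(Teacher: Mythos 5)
Your proof is correct, and it takes a genuinely different route from the paper's. The paper covers the compact set $\mathcal{K}_{\varepsilon/2}(\alpha)$ by finitely many balls contained in $\mathcal{K}_\varepsilon(\alpha)$, then moves from $x$ by a \emph{straight-line} step $y_\lambda = x + \lambda (\nabla D)_x/\|(\nabla D)_x\|$ and controls $D(y_\lambda)$ via a second-order Taylor expansion with the uniform Hessian bound $M_H = \sup_{\mathcal{K}_\varepsilon(\alpha)}\opnorm{(HD)_x}$; restricting to $\lambda < m_\nabla/M_H$ lets the quadratic remainder eat at most half of the first-order gain, which is exactly where the factor $2$ in $A = 2/m_\nabla$ comes from (so your closing speculation that the $2$ absorbs a boundary subtlety is off — in the paper it is the price of the Taylor remainder, and in your argument it is genuinely slack). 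You instead integrate the normalized gradient flow $\varphi' = \pm(\nabla D)_\varphi/\|(\nabla D)_\varphi\|^2$, so that $D(\varphi(t)) = \alpha \pm t$ exactly and the path length is bounded by $|\alpha-\beta|/m_\nabla$; this never invokes the Hessian (the $\mathscr{C}^2$ hypothesis is used only to make the field locally Lipschitz for Cauchy--Lipschitz) and yields the sharper constant $1/m_\nabla$. Both arguments rest on the same two structural facts, which you correctly identify: compactness of $\mathcal{K}_\varepsilon(\alpha)$ via \textbf{(D4)} and $\alpha - \varepsilon > 0$, and the fact that $\mathcal{K}_{\varepsilon/2}(\alpha)$ sits in the interior of $\mathcal{K}_\varepsilon(\alpha)$, which is what keeps your trajectory (respectively, the paper's line segment) inside the region where the gradient bound holds for the whole time $|\alpha-\beta| \le \gamma$. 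Your escape-to-the-boundary bookkeeping is the one point that deserves the explicit sentence you give it, and it is handled adequately.
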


The following result is an adapted version of Theorem 2 in \citet{CuevasRodri06} to depth functions, where we weaken the assumption of continuity of the empirical depth function by an assumption of upper-semicontinuity.
\begin{theorem}\label{theoVitessedH}
Let $D:\mathbb{R}^d \times \mathscr{P} \to \mathbb{R}_+$ be a depth function, $P \in \mathscr{P}$ and $\alpha \in (0,\alpha_{\max}(P))$. Denote by $P_n$ an estimator of $P$, $n \geq 1$. Suppose that $x \in \mathbb{R}^d \mapsto D(x,P):=D(x)$ is a continuous function, and $x \in \mathbb{R}^d \mapsto D_n(x):=D(x,P_n)$ is upper semi-continuous $\mathbb{P}$-almost surely for any $n \geq 1$, and that $$\|D_n- D\|_{\infty,\mathbb{R}^d} \xrightarrow[n \to \infty]{a.s} 0.$$ Under the same assumptions as in Proposition \ref{PropCasalRodriguez}, it holds that $$d_H(\partial \mathcal{L}(\alpha),\partial \mathcal{L}_n(\alpha))=\underset{n \to \infty}{O}(\|D_n- D\|_{\infty,\mathbb{R}^d}),~\mathbb{P}\text{-a.s}.$$
\end{theorem}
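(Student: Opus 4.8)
The plan is to transfer the uniform convergence of the depth functions into a Hausdorff-distance bound on the boundaries by exploiting the locally Lipschitz behavior of the level-set map guaranteed by Proposition \ref{PropCasalRodriguez}. First I would set $\eta_n := \|D_n - D\|_{\infty,\mathbb{R}^d}$; by hypothesis $\eta_n \to 0$ almost surely, so for $n$ large enough (almost surely) we have $\eta_n < \min(\varepsilon,\gamma)$, where $\varepsilon$ and $\gamma$ come from Proposition \ref{PropCasalRodriguez} and Assumption \textbf{(L)}. The goal is to show that each point of $\partial\mathcal{L}_n(\alpha)$ is within distance $C\eta_n$ of $\partial\mathcal{L}(\alpha)$ and vice versa, with $C$ essentially $A = 2/m_\nabla$ up to constants.

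For one inclusion, I would take a point $z \in \partial\mathcal{L}_n(\alpha)$. Using upper semi-continuity of $D_n$ and the definition of the sublevel set $\mathcal{L}_n(\alpha) = \{D_n \le \alpha\}$, a boundary point satisfies $D_n(z) = \alpha$ (the sublevel set of an u.s.c. function is closed, and on the boundary the value cannot be strictly below $\alpha$ by openness of $\{D_n < \alpha\}$ — this is where I expect to need a short argument, and it is the one genuinely delicate point, since u.s.c. only gives closedness of $\{D_n \le \alpha\}$ directly; for the reverse one uses that $\{D_n > \alpha\} \cup \{D_n < \alpha\}$ surrounds the boundary and $D$ is continuous nearby). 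Granting $D_n(z) = \alpha$, uniform convergence gives $|D(z) - \alpha| = |D(z) - D_n(z)| \le \eta_n$, so $z \in D^{-1}([\alpha - \eta_n, \alpha + \eta_n])$. Writing $\beta := D(z)$, we have $z \in \{D = \beta\}$ with $|\alpha - \beta| \le \eta_n \le \gamma$, hence by Assumption \textbf{(L)}, $\dist(z, \{D = \alpha\}) \le d_H(\{D=\beta\},\{D=\alpha\}) \le A\,\eta_n$. Since under the assumptions of Proposition \ref{PropCasalRodriguez} the level set $\{D = \alpha\} = \partial\mathcal{L}(\alpha)$ (the gradient is nonvanishing on $\mathcal{K}_\varepsilon(\alpha)$, so $\{D=\alpha\}$ is a $\mathscr{C}^2$ hypersurface with no interior), this yields $\dist(z, \partial\mathcal{L}(\alpha)) \le A\,\eta_n$.

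For the symmetric inclusion I would take $w \in \partial\mathcal{L}(\alpha) = \{D = \alpha\}$ and aim to find a nearby point of $\partial\mathcal{L}_n(\alpha)$. The idea is to move along the gradient ray through $w$: since $\|\nabla D\| \ge m_\nabla > 0$ near $w$, the function $D$ strictly decreases along $-\nabla D(w)/\|\nabla D(w)\|$ and increases along $+\nabla D(w)/\|\nabla D(w)\|$, so there exist points $w^\pm$ at distance $O(\eta_n/m_\nabla)$ from $w$ with $D(w^+) > \alpha + \eta_n$ and $D(w^-) < \alpha - \eta_n$ (using the $\mathscr{C}^2$, hence $\mathscr{C}^1$, regularity for a first-order Taylor estimate with the bound on the Hessian on the compact set $\mathcal{K}_\varepsilon(\alpha)$ to control the remainder). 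Uniform convergence then forces $D_n(w^+) > \alpha$ and $D_n(w^-) < \alpha$, so by upper semi-continuity of $D_n$ the segment $[w^-, w^+]$ contains a boundary point of $\mathcal{L}_n(\alpha)$ (one takes the supremum of $t$ along the segment for which $D_n \le \alpha$ and checks it lies on $\partial\mathcal{L}_n(\alpha)$), which is within $O(\eta_n/m_\nabla)$ of $w$. Combining both inclusions gives $d_H(\partial\mathcal{L}(\alpha), \partial\mathcal{L}_n(\alpha)) \le C\,\eta_n$ for $n$ large, i.e. the claimed $O(\|D_n - D\|_{\infty,\mathbb{R}^d})$ bound, almost surely. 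The main obstacle is the careful handling of the boundary characterization under mere upper semi-continuity of $D_n$ (rather than continuity as in \citet{CuevasRodri06}): one must verify that $\partial\mathcal{L}_n(\alpha) \subset \{D_n = \alpha\}$ and that boundary points of the true set are genuinely trapped between $\{D_n < \alpha\}$ and $\{D_n > \alpha\}$ without appealing to continuity of $D_n$, which is precisely where the weakening of the hypothesis relative to the cited theorem must be paid for.
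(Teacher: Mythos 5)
Your overall strategy coincides with the paper's: bound each one-sided Hausdorff excess separately, use the Lipschitz behaviour of $\tilde\alpha\mapsto\{D=\tilde\alpha\}$ (equivalently the gradient-ray/Taylor argument, which is just Proposition \ref{PropCasalRodriguez} re-derived inline) to produce points $w^\pm$ with $D(w^-)<\alpha-\eta_n<\alpha+\eta_n<D(w^+)$ at distance $O(\eta_n)$ from a true boundary point, transfer to $D_n$ by uniform convergence, and trap a point of $\partial\mathcal{L}_n(\alpha)$ on the segment $[w^-,w^+]$. That half is sound and matches Step~1 of the paper.

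The gap is in the other direction, and it sits exactly where you flagged the ``delicate point'': the claim $\partial\mathcal{L}_n(\alpha)\subset\{D_n=\alpha\}$ is \emph{false} under mere upper semi-continuity, and the reason you give for it is also backwards --- for an u.s.c. function it is the superlevel sets $\{D_n\geq c\}$ that are closed and the strict sublevel sets $\{D_n<c\}$ that are open; $\{D_n\leq\alpha\}$ need not be closed. Concretely, if $D_n$ equals $\alpha+1$ at a single point $z_0$ and is $<\alpha$ on a punctured neighbourhood, then $D_n$ is u.s.c., $z_0\in\overline{\{D_n\leq\alpha\}}\cap\{D_n>\alpha\}\subset\partial\mathcal{L}_n(\alpha)$, yet $D_n(z_0)\neq\alpha$. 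What u.s.c. does give is the one-sided bound $D_n(z)\geq\alpha$ for $z\in\partial\mathcal{L}_n(\alpha)$ (openness of $\{D_n<\alpha\}$ forbids $D_n(z)<\alpha$ at a boundary point), hence $D(z)\geq\alpha-\eta_n$. For the missing upper bound you cannot evaluate $D_n$ at $z$; instead, since $z\in\overline{\{D_n\leq\alpha\}}$, pick $\ell$ with $D_n(\ell)\leq\alpha$ close enough to $z$ that $|D(z)-D(\ell)|\leq\eta_n$ by continuity of $D$ (not of $D_n$), whence $D(z)\leq D(\ell)+\eta_n\leq\alpha+2\eta_n$. Together these give $|D(z)-\alpha|\leq 2\eta_n$, and Assumption \textbf{(L)} applied with $\beta=D(z)$ yields $\dist(z,\partial\mathcal{L}(\alpha))\leq 2A\eta_n$, which is exactly how the paper pays for weakening continuity of $D_n$ to upper semi-continuity. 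Your parenthetical gestures at this fix, so the error is localized and repairable (it only costs a factor $2$ in the constant), but as written the step ``Granting $D_n(z)=\alpha$'' rests on a false premise and must be replaced by the two one-sided estimates above.
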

\begin{remark} \label{remark2haussdistance}
Recall that, for $n$ large enough, $\partial \mathcal{L}_n(\alpha) \neq \varnothing$  $\mathbb{P}$-a.s, so that the Hausdorff distance is well-defined for large $n$ (see Remark \ref{remarquebordalphamax} as well). 
Indeed, in Theorem \ref{theoVitessedH}, one can underline two main properties of a depth function, namely : upper semi-continuity which is equivalent to having closed depth-based upper level sets, and Property \textbf{(D4)} (vanishing at infinity) which guarantees that the upper level sets are bounded. As a consequence, the Hausdorff distance is well defined since $\partial \mathcal{L}_n(\alpha)$ is closed by definition and is bounded as it is included in the compact $\alpha$-upper level set ($\alpha>0$). The same applies for $\partial \mathcal{L}(\alpha)$ (or immediately by continuity of D). What is more, $\alpha \in  (0,\alpha_{\max}(P))$ implies $\partial \mathcal{L}(\alpha) \neq \varnothing$ (cf. Remark \ref{remarquebordalphamax}). 
\end{remark}
%%D[D[-(1/theta)*log(1+ (exp(-theta*exp(-exp((m-x)/beta))))*(exp(-theta*exp(-exp((p-y)/gamma))))/(exp(-theta)-1)),{x,1}],{y,1}]
\subsection{\textit{MHD}-depth based Covariate-Conditional-Tail-\\Expectation consistency}\label{MHD section}
The \textit{Mahalanobis} depth function (Example 2.5 in \citet{ZS2000}) %has all the desired properties in order to satisfy all the results of the previous sections. It 
is a depth function in the sense of Definition \ref{defprofmulti} (see Definition \ref{MHDDepth}), and is smooth as a function of $x$ (which implies the upper-semicontinuity property in the empirical case as well). \newline %It satisfies property \textbf{(D4)} (vanishing at infinity), it is uniformly consistent on $\mathbb{R}^d$ and, mostly, it is a smooth function as a function of $x$ (which implies the upper-semicontinuity property in the empirical case as well).\\

In order to study the rate of convergence of the $\CCTE_D$ based on $MHD$, we check here Assumption \textbf{(H1)(i)}. According to Section \ref{Hausdorff section}, the problem reduces to studying the rate of convergence of $\|D_n-D\|_\infty$, in probability, when $D=MHD$ (c.f. Section \ref{Hausdorff section}, Theorem \ref{theoVitessedH}).
\begin{definition}[\textit{Mahalanobis depth}, \citet{ZS2000}] \label{MHDDepth}  
Let $\bm X \in \mathbb{R}^d$ be a random vector with distribution $P \in \mathscr{P}$. The \textit{Mahalanobis} depth is defined by 
 $$
MHD(x,P)= \left\{
    \begin{array}{ll}
        \displaystyle \left(1+ d_{\Sigma_{\bm X}}^2(x, \mu_{\bm X}) \right)^{-1} & \mbox{if }\mathbb{E}_P[\| \bm X\|^2] < +\infty \\
        0 & \mbox{if } \mathbb{E}_P[\| \bm X\|^2] =+\infty
    \end{array}
\right.
$$ 
where $\mu_{\bm X}= \mathbb{E}_P[\bm X]$ is the mean vector of $\bm X$ and $\Sigma_{\bm X} $ is its covariance matrix (which is assumed to be invertible) and $$d_{\Sigma_{\bm X}}^2(x, \mu_{\bm X}):=\|x - \mu_{\bm X} \|_{\Sigma_{\bm X}}^2:=~^t(x-\mu_{\bm X})\Sigma_{\bm X}^{-1}(x-\mu_{\bm X})$$ is the \textit{Mahalanobis} distance. 
\end{definition}
\begin{remark}
Note that, the above definition of $MHD$ depth is introduced as such in order to highlight the fact that, it is restricted to ditributions with second moment while still remaining a depth function in the sense of Definition \ref{defprofmulti}. Furthermore, for a fixed distribution $P$, the function $x \in \mathbb{R}^d \mapsto MHD(x,P)$ is infinitely differentiable, concave, and has $x=\mu_{\bm X}$ as unique critical point, thus $\displaystyle \mu_{\bm X}=\argmax_{x \in \mathbb{R}^d}MHD(x,P)$. And $\displaystyle \alpha_{\max}(P):=\max_{x \in \mathbb{R}^d}MHD(x,P)=1.$
\end{remark}
In \citet{ZS2000}, one can also find the following result which underlines the properties of \textit{MHD} as a depth function (Theorem 2.10, \citet{ZS2000}).
\begin{proposition}
[\citet{ZS2000}]\label{propdeprofondeurMHD}
Let $\bm X \in \mathbb{R}^d$ be a random vector with distribution $P \in \mathscr{P}$. %s.t. $\mathbb{E}_P[\| \bm X\|^2] < \infty$. 
Assume $\bm X$ is symmetric (for some notion of symmetry) about $\theta \in \mathbb{R}^d$. If $\mu_{\bm X}=\theta$, then $MHD$ is a statistical depth function in the sense of Definition \ref{defprofmulti}.
\end{proposition}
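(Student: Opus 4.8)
The plan is to verify the four axioms \textbf{(D1)}--\textbf{(D4)} of Definition \ref{defprofmulti} for $D(\cdot,P) = MHD(\cdot,P)$ on the class of distributions $P$ that are symmetric about some $\theta$ with $\mu_{\bm X}=\theta$, relying on the quadratic-form structure of the Mahalanobis distance. The measurability, boundedness (by $1$) and non-negativity are immediate from the definition, so the substance is the four numbered properties. For \textbf{(D4)} (vanishing at infinity), I would note that when $\mathbb{E}_P[\|\bm X\|^2]<\infty$ the matrix $\Sigma_{\bm X}^{-1}$ is positive definite, hence $d_{\Sigma_{\bm X}}^2(x,\mu_{\bm X}) \geq \lambda_{\min}(\Sigma_{\bm X}^{-1})\,\|x-\mu_{\bm X}\|^2 \to \infty$ as $\|x\|\to\infty$, so $MHD(x,P)=(1+d_{\Sigma_{\bm X}}^2(x,\mu_{\bm X}))^{-1}\to 0$; the case $\mathbb{E}_P[\|\bm X\|^2]=\infty$ is trivial since $MHD\equiv 0$.

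Next I would dispatch \textbf{(D1)} (affine invariance). For invertible $A$ and $b\in\mathbb{R}^d$, the random vector $A\bm X+b$ has mean $A\mu_{\bm X}+b$ and covariance $A\Sigma_{\bm X}\,^tA$, which is still invertible, and the moment condition is preserved. A direct computation gives
\begin{align*}
d_{A\Sigma_{\bm X}\,^tA}^2(Ax+b, A\mu_{\bm X}+b)
&= {}^t\!\big(A(x-\mu_{\bm X})\big)\,(A\Sigma_{\bm X}\,^tA)^{-1}\,A(x-\mu_{\bm X})\\
&= {}^t\!(x-\mu_{\bm X})\,\Sigma_{\bm X}^{-1}\,(x-\mu_{\bm X})
= d_{\Sigma_{\bm X}}^2(x,\mu_{\bm X}),
\end{align*}
so $MHD(Ax+b,P_{A\bm X+b})=MHD(x,P_{\bm X})$. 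For \textbf{(D2)} and \textbf{(D3)} I would exploit the fact, already recorded in the Remark preceding the statement, that $x\mapsto MHD(x,P)$ is concave with the unique critical point $x=\mu_{\bm X}$: concavity plus a single stationary point forces a global maximum there, so $MHD(\mu_{\bm X},P)=\sup_x MHD(x,P)=1$. The hypothesis $\mu_{\bm X}=\theta$ then identifies this maximizer with the center $\theta$, giving \textbf{(D2)}. For \textbf{(D3)}, with deepest point $\theta=\mu_{\bm X}$, the map $t\mapsto d_{\Sigma_{\bm X}}^2(\theta + t(x-\theta),\mu_{\bm X}) = t^2 d_{\Sigma_{\bm X}}^2(x,\theta)$ is nondecreasing in $t\in[0,1]$, hence $\beta\mapsto MHD(\beta x+(1-\beta)\theta,P)$ is nondecreasing as $\beta$ decreases from $1$ to $0$, i.e. $MHD(x,P)\le MHD(\beta x+(1-\beta)\theta,P)$ for $\beta\in[0,1]$; this is exactly \textbf{(D3)}. (Alternatively \textbf{(D3)} follows from concavity of $MHD$ along the segment together with the maximality at $\theta$.)

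The only genuinely non-formal point — and the one I expect to be the main obstacle — is the role of the symmetry hypothesis and the extra assumption $\mu_{\bm X}=\theta$ in property \textbf{(D2)}: \textbf{(D2)} requires the depth to be maximized \emph{at the center of symmetry} $\theta$, whereas the computation above shows $MHD$ is intrinsically maximized at the mean $\mu_{\bm X}$. Without assuming $\mu_{\bm X}=\theta$ these two points may differ, and \textbf{(D2)} would fail; under $C$-symmetry about $\theta$ one in fact has $\mu_{\bm X}=\theta$ automatically (when the mean exists), but for the weaker $A$- and $H$-symmetry notions this need not hold, which is precisely why the statement carries $\mu_{\bm X}=\theta$ as an explicit hypothesis. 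So the care in the argument is simply to invoke this hypothesis at the right place and to check that all the covariance and moment conditions defining $MHD$ transform correctly under the affine maps in \textbf{(D1)}; everything else is the elementary linear algebra of positive-definite quadratic forms sketched above. Finally I would remark that this is Theorem 2.10 of \citet{ZS2000}, so the proof reduces to assembling these observations.
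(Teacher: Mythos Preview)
The paper does not actually prove this proposition: it is stated as Theorem 2.10 of \citet{ZS2000} and left without argument. Your axiom-by-axiom verification is therefore more than the paper provides, and it is essentially correct.

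One caveat worth flagging: you invoke the claim from the Remark preceding the statement that $x\mapsto MHD(x,P)$ is \emph{concave}. That claim is in fact false --- already in dimension one, $f(x)=(1+x^2)^{-1}$ has $f''(x)=(6x^2-2)/(1+x^2)^3>0$ for $|x|>1/\sqrt{3}$ --- so the route ``concavity plus a unique critical point $\Rightarrow$ global maximum'' is not available. Fortunately you do not need it: for \textbf{(D2)} the maximality at $\mu_{\bm X}$ is immediate from $d_{\Sigma_{\bm X}}^2(x,\mu_{\bm X})\geq 0$ with equality iff $x=\mu_{\bm X}$, giving $MHD(x,P)\leq 1=MHD(\mu_{\bm X},P)$; and for \textbf{(D3)} your explicit computation $d_{\Sigma_{\bm X}}^2(\theta+t(x-\theta),\theta)=t^2\,d_{\Sigma_{\bm X}}^2(x,\theta)$ already does the job without any appeal to concavity. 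With that adjustment your proof stands, and your discussion of why the hypothesis $\mu_{\bm X}=\theta$ is needed (automatic under $C$-symmetry, but not under $A$- or $H$-symmetry) is exactly the point.
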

%\begin{table}[H]
%\noindent
%\noindent \begin{tabularx}{\linewidth}{|c||X|X|X|X|X|X|}
%\hline
%\multicolumn{7}{c}{ \textbf{Consistance faible}} \\
%\hline
%  & \textbf{D1} & \textbf{D2} & \textbf{D3} & \textbf{D3conv} & \textbf{D4} & \textbf{D5} \\
%\hline
%$MHD$ & \centering \textcolor{black}{$\checkmark$} & \centering %\textcolor{black}{$\checkmark$} & \centering \textcolor{black}{$\checkmark$} & \centering %\textcolor{black}{$\checkmark$} & \centering \textcolor{black}{$\checkmark$} & \centering %\textcolor{black}{$\checkmark$} \tabularnewline
% \hline
%\end{tabularx}
%\caption{\label{profmultitable} Properties of \textit{MHD} as a depth function.}
%\end{table}
\begin{figure}[H]
    \centering
    \includegraphics[scale=0.4]{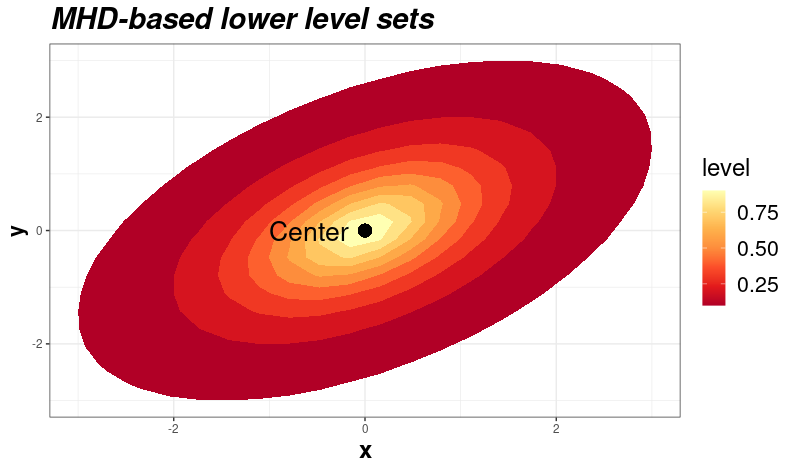}
    \caption{Theoretical lower-level sets based on $MHD(\cdot,P),$ with $P$ the law of a Gaussian vector in $\mathbb{R}^2.$}
    \label{MHDlevelsetsfig}
\end{figure}
A natural estimator of $MHD$ is given by \begin{equation}\label{estmMHD}
\displaystyle MHD_n(x):=MHD(x,P_n)=\left(1+ ~^t(x-\hat{\mu}_n)\hat{\Sigma}_n^{-1}(x-\hat{\mu}_n)) \right)^{-1},
\end{equation}
where $\hat{\mu}_n$ is the empirical mean vector and $\hat{\Sigma}_n$ is the empirical covariance matrix.

In Proposition \ref{propositiondeMHDD_h}, we provide the particular version of Proposition \ref{PropCasalRodriguez} associated to $MHD$ and $MHD_n$ depths.
\begin{proposition}\label{propositiondeMHDD_h}
Let $\bm X \in \mathbb{R}^d$ be a random vector from $P \in \mathscr{P}$ s.t. $\mathbb{E}_P[\| \bm X\|^2] < \infty.$
Consider $D(x):=MHD(x):=MHD(x,P)$ and fix $\alpha \in (0,1)$ and $0<\varepsilon<\alpha \wedge (1-\alpha)$. Denoting $\mathcal{K}_\varepsilon(\alpha):=D^{-1}([\alpha \pm \varepsilon])$, it holds \vspace{0.2cm} \newline
\textbf{(i)} $\displaystyle m_\nabla:=\inf_{x \in \mathcal{K}_\varepsilon(\alpha)} \|(\nabla D)_x\|>0,$ and \newline
\textbf{(ii)} %if $\mathbb{P}$-a.s, for all $n \geq 1,$ $\partial \mathcal{L}_n(\alpha) \neq \varnothing$, then
$d_H(\partial \mathcal{L}_{MHD}(\alpha),\partial \mathcal{L}_n(\alpha))=\underset{n \to \infty}{O}(\|MHD_n- MHD\|_{\infty,\mathbb{R}^d}),~\mathbb{P}\text{-a.s}.$
\end{proposition}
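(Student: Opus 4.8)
The plan is to check that $MHD$ satisfies the hypotheses of Proposition \ref{PropCasalRodriguez} — which is precisely part \textbf{(i)} — and then to feed part \textbf{(i)} together with the regularity and uniform-convergence requirements into Theorem \ref{theoVitessedH} to obtain \textbf{(ii)}. For \textbf{(i)}, write $g(x):={}^{t}(x-\mu_{\bm X})\Sigma_{\bm X}^{-1}(x-\mu_{\bm X})$, so that $D(x)=(1+g(x))^{-1}$; since $g$ is a polynomial and $1+g\ge 1$ everywhere, $D\in\mathscr{C}^{\infty}(\mathbb{R}^d)$ (as already observed after Definition \ref{MHDDepth}), so hypothesis (i) of Proposition \ref{PropCasalRodriguez} holds on $\mathcal{K}_\varepsilon(\alpha)$, and $(\nabla D)_x=-2(1+g(x))^{-2}\Sigma_{\bm X}^{-1}(x-\mu_{\bm X})$. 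On $\mathcal{K}_\varepsilon(\alpha)$ one has $\tfrac{1}{\alpha+\varepsilon}-1\le g(x)\le\tfrac{1}{\alpha-\varepsilon}-1$, and the hypothesis $\varepsilon<\alpha\wedge(1-\alpha)$ forces $\alpha+\varepsilon<1$, hence $g(x)\ge\tfrac{1}{\alpha+\varepsilon}-1>0$ on $\mathcal{K}_\varepsilon(\alpha)$ — this is exactly what keeps the unique critical point $x=\mu_{\bm X}$ out of $\mathcal{K}_\varepsilon(\alpha)$. From $D(x)\ge\alpha-\varepsilon$ one gets $(1+g(x))^{-2}\ge(\alpha-\varepsilon)^2$, while writing $w:=\Sigma_{\bm X}^{-1}(x-\mu_{\bm X})$ gives $g(x)={}^{t}w\,\Sigma_{\bm X}\,w\le\opnorm{\Sigma_{\bm X}}\|w\|^2$, i.e. $\|w\|^2\ge g(x)/\opnorm{\Sigma_{\bm X}}$; therefore
\[
m_\nabla=\inf_{x\in\mathcal{K}_\varepsilon(\alpha)}\|(\nabla D)_x\|\;\ge\;2(\alpha-\varepsilon)^2\sqrt{\frac{1}{\opnorm{\Sigma_{\bm X}}}\Big(\frac{1}{\alpha+\varepsilon}-1\Big)}\;>\;0,
\]
which proves \textbf{(i)} (and Proposition \ref{PropCasalRodriguez} then also yields Assumption \textbf{(L)} with $A=2/m_\nabla$).

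For \textbf{(ii)}, I would apply Theorem \ref{theoVitessedH} with $D=MHD$, $D_n=MHD_n$ and $P_n$ the empirical measure, checking its four inputs: (a) the Proposition \ref{PropCasalRodriguez} hypotheses, which hold by \textbf{(i)}; (b) continuity of $D(\cdot,P)$, which is $\mathscr{C}^{\infty}$; (c) upper semi-continuity of $D_n$ $\mathbb{P}$-a.s.\ for large $n$ — indeed, by the strong law of large numbers (valid since $\mathbb{E}_P\|\bm X\|^2<\infty$) $\hat\mu_n\to\mu_{\bm X}$ and $\hat\Sigma_n\to\Sigma_{\bm X}$ a.s., hence $\lambda_{\min}(\hat\Sigma_n)\to\lambda_{\min}(\Sigma_{\bm X})>0$, so a.s.\ $\hat\Sigma_n$ is invertible and $MHD_n$ is $\mathscr{C}^{\infty}$ for all $n$ large (small $n$ being irrelevant to the asymptotic statement, cf.\ Remarks \ref{remarquebordalphamax}--\ref{remark2haussdistance}); and (d) $\|MHD_n-MHD\|_{\infty,\mathbb{R}^d}\to 0$ $\mathbb{P}$-a.s. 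Granting (a)--(d), Theorem \ref{theoVitessedH} gives $d_H(\partial\mathcal{L}_{MHD}(\alpha),\partial\mathcal{L}_n(\alpha))=O(\|MHD_n-MHD\|_{\infty,\mathbb{R}^d})$ $\mathbb{P}$-a.s., which is \textbf{(ii)}.

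The crux — and the only genuinely delicate point — is (d). The naive bound through the $1$-Lipschitz map $t\mapsto(1+t)^{-1}$ is useless here, since $\|g_n-g\|_{\infty,\mathbb{R}^d}=+\infty$ because the leading quadratic terms of $g_n={}^{t}(\,\cdot-\hat\mu_n)\hat\Sigma_n^{-1}(\,\cdot-\hat\mu_n)$ and of $g$ do not coincide; instead one must exploit the saturation of $(1+\cdot)^{-1}$ together with the uniform vanishing at infinity. Concretely, I would split the supremum: on the a.s.\ event where $\hat\mu_n\to\mu_{\bm X}$ and $\hat\Sigma_n\to\Sigma_{\bm X}$, for $n$ large one has $\opnorm{\hat\Sigma_n}\le 2\opnorm{\Sigma_{\bm X}}$ and $\|\hat\mu_n\|\le C$, so $g_n(x)\ge(\|x\|-C)^2/(2\opnorm{\Sigma_{\bm X}})$ and likewise $g(x)\to\infty$ as $\|x\|\to\infty$; given $\eta>0$, pick $R$ independent of $n$ with $MHD(x,P)<\eta/4$ and, for all $n$ large, $MHD_n(x)<\eta/4$ once $\|x\|>R$, so $|MHD_n-MHD|<\eta/2$ on $\{\|x\|>R\}$, while on the compact $\{\|x\|\le R\}$ matrix inversion being continuous at $\Sigma_{\bm X}$ gives $\hat\Sigma_n^{-1}\to\Sigma_{\bm X}^{-1}$ a.s., hence $g_n\to g$ and $MHD_n\to MHD$ uniformly there, so $|MHD_n-MHD|<\eta/2$ on $\{\|x\|\le R\}$ for $n$ large; altogether $\|MHD_n-MHD\|_{\infty,\mathbb{R}^d}<\eta$ for all $n$ large, a.s. Everything else is routine bookkeeping around Proposition \ref{PropCasalRodriguez} and Theorem \ref{theoVitessedH}.
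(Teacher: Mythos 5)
Your proof is correct and follows the same overall skeleton as the paper's: verify the hypotheses of Proposition \ref{PropCasalRodriguez} for $MHD$ (part \textbf{(i)}) and then invoke Theorem \ref{theoVitessedH}, the only substantive obligation being the almost-sure uniform convergence $\|MHD_n-MHD\|_{\infty,\mathbb{R}^d}\to 0$. You execute two sub-steps differently, though, and both variants are valid. For \textbf{(i)}, the paper computes $(\nabla MHD)_x=-2\,MHD(x)^2\,\Sigma_{\bm X}^{-1}(x-\mu_{\bm X})$ exactly as you do, but then argues non-quantitatively: the norm of the gradient is continuous and vanishes only at $\mu_{\bm X}$, the infimum over the compact set $\mathcal{K}_\varepsilon(\alpha)$ is attained, and $\mu_{\bm X}\notin\mathcal{K}_\varepsilon(\alpha)$ because $\varepsilon<1-\alpha$. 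Your explicit bound $m_\nabla\ge 2(\alpha-\varepsilon)^2\bigl(\opnorm{\Sigma_{\bm X}}^{-1}(\tfrac{1}{\alpha+\varepsilon}-1)\bigr)^{1/2}$ is sharper in the sense that it quantifies how $m_\nabla$ (hence the Lipschitz constant $A=2/m_\nabla$ in \textbf{(L)}) degrades as $\alpha\to 0$ or $\alpha+\varepsilon\to 1$, which the paper only alludes to informally when discussing the simulations. For the uniform convergence, the paper does not split the supremum: it reuses the global bound $\|MHD_n-MHD\|_{\infty,\mathbb{R}^d}\le A_n(d)$ from the proof of Theorem \ref{dnRate_MHD}, obtained by bounding $|MHD_n(x)-MHD(x)|$ by $|g_n(x)-g(x)|/(1+g(x))$ (keeping the denominator, which is what defeats the problem you correctly identify with the naive Lipschitz bound) and then changing variables $y=\Sigma^{-1/2}(x-\mu)$ so that the resulting expression is dominated uniformly in $y$ by quantities converging to zero a.s.\ under the strong law. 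Your far-field/near-field decomposition reaches the same conclusion by a more hands-on route that does not presuppose the computation of Theorem \ref{dnRate_MHD}; it also has the small merit of explicitly handling the invertibility of $\hat\Sigma_n$ for large $n$ and the restriction to second moments, points the paper passes over silently.
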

Now, Theorem \ref{dnRate_MHD} is a useful result in which we provide the rate of convergence (in probability) of $MHD_n$ to its population version $MHD$ uniformly on $\mathbb{R}^d$. %is at most $\sqrt{n.}$ 
\begin{theorem}\label{dnRate_MHD}
Let $\bm X=(X^{(1)},\cdots,X^{(d)})$ be a random vector with distribution $P \in \mathscr{P}$ satisfying $\mathbb{E}_P[|X^{(i)}|^4]<\infty$ for all $1 \leq i \leq d$. Then, it holds that \begin{center}
   $\displaystyle \| MHD_n - MHD \|_{\infty, \mathbb{R}^d} =\mathcal{O}_{P,n}\left(n^{-\frac{1}{2}}\right).$
\end{center}
\end{theorem}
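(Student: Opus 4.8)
# Proof Proposal for Theorem \ref{dnRate_MHD}

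\textbf{Overall strategy.} The plan is to reduce the uniform convergence of $MHD_n$ to $MHD$ on all of $\mathbb{R}^d$ to the convergence of the finite-dimensional parameters $(\hat\mu_n, \hat\Sigma_n)$ to $(\mu_{\bm X}, \Sigma_{\bm X})$, and then invoke the classical $\sqrt{n}$-rate for these empirical moments. The key structural observation is that both $MHD(x,P)$ and $MHD_n(x)$ have the form $(1+Q(x))^{-1}$ where $Q$ is a (random, in the empirical case) nonnegative quadratic form; since $t\mapsto (1+t)^{-1}$ is $1$-Lipschitz on $[0,\infty)$, we get the pointwise bound
\begin{equation*}
  \big| MHD_n(x) - MHD(x) \big| \;\le\; \big| {}^t(x-\hat\mu_n)\hat\Sigma_n^{-1}(x-\hat\mu_n) - {}^t(x-\mu_{\bm X})\Sigma_{\bm X}^{-1}(x-\mu_{\bm X}) \big|.
\end{equation*}
The right-hand side is a quadratic polynomial in $x$ and is \emph{not} uniformly bounded in $x$, so the naive bound fails. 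The trick is to keep a surviving factor $(1+Q(x))^{-1}$: write the difference of the depths as a product of the difference of the quadratic forms times $MHD_n(x)\,MHD(x)$, and exploit that $MHD(x)$ decays like $\|x\|^{-2}$ while the difference of quadratic forms grows at most like $\|x\|^2$.

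\textbf{Key steps in order.} First I would write, with $A:=\Sigma_{\bm X}^{-1}$, $A_n:=\hat\Sigma_n^{-1}$, and using $MHD_n(x)\,MHD(x) = \big[(1+{}^t(x-\hat\mu_n)A_n(x-\hat\mu_n))(1+{}^t(x-\mu_{\bm X})A(x-\mu_{\bm X}))\big]^{-1}$,
\begin{equation*}
  MHD_n(x) - MHD(x) = \Big( {}^t(x-\mu_{\bm X})A(x-\mu_{\bm X}) - {}^t(x-\hat\mu_n)A_n(x-\hat\mu_n) \Big)\, MHD_n(x)\, MHD(x).
\end{equation*}
Second, I would expand the difference of quadratic forms as a sum of terms, grouping them by their degree in $x$: a degree-$2$ term $ {}^t x (A - A_n) x$, degree-$1$ terms linear in $x$ with coefficients involving $A\mu_{\bm X} - A_n\hat\mu_n$, and a degree-$0$ term. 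Each coefficient is controlled by $\opnorm{A - A_n}$ and $\|\mu_{\bm X} - \hat\mu_n\|$ (and $\|\mu_{\bm X}\|$, $\opnorm{A}$, which are fixed constants). Third, I would bound each resulting term: the degree-$2$ term contributes $\opnorm{A-A_n}\,\|x\|^2\,MHD(x)$, and since $MHD(x) \le (1 + \lambda_{\min}(A)\|x-\mu_{\bm X}\|^2)^{-1}$ one checks that $\sup_{x}\|x\|^2\,MHD(x) < \infty$ (a deterministic constant depending only on $A$ and $\mu_{\bm X}$); the degree-$1$ terms contribute $\big(\opnorm{A-A_n} + \|\hat\mu_n - \mu_{\bm X}\|\big)\cdot\|x\|\cdot MHD(x)$, bounded similarly using $\sup_x \|x\|\,MHD(x)<\infty$; the degree-$0$ term is bounded by $MHD(x)\le 1$ times the same small quantities. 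We may freely bound the extra factor $MHD_n(x)$ by $1$. Conclusion: $\|MHD_n - MHD\|_{\infty,\mathbb{R}^d} \le C\big(\opnorm{\hat\Sigma_n^{-1} - \Sigma_{\bm X}^{-1}} + \|\hat\mu_n - \mu_{\bm X}\|\big)$ with $C$ deterministic, on the event that $\hat\Sigma_n$ is invertible (which holds with probability tending to one, or a.s. for $n$ large under mild nondegeneracy).

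\textbf{Finishing with the $\sqrt n$ rate.} It remains to show $\|\hat\mu_n - \mu_{\bm X}\| = \mathcal{O}_{P,n}(n^{-1/2})$ and $\opnorm{\hat\Sigma_n^{-1} - \Sigma_{\bm X}^{-1}} = \mathcal{O}_{P,n}(n^{-1/2})$. For the mean, $\mathbb{E}\|\hat\mu_n - \mu_{\bm X}\|^2 = O(n^{-1})$ from the second-moment hypothesis, and Markov gives the stochastic $O_{P}$ bound. For the covariance, the entries of $\hat\Sigma_n$ are empirical averages of products $X^{(i)}X^{(j)}$ minus products of empirical means; the fourth-moment hypothesis $\mathbb{E}[|X^{(i)}|^4]<\infty$ is exactly what is needed for each entry of $\hat\Sigma_n - \Sigma_{\bm X}$ to have variance $O(n^{-1})$, whence $\opnorm{\hat\Sigma_n - \Sigma_{\bm X}} = \mathcal{O}_{P,n}(n^{-1/2})$. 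Finally, the map $M \mapsto M^{-1}$ is locally Lipschitz near the invertible matrix $\Sigma_{\bm X}$ (via the identity $M^{-1} - \Sigma_{\bm X}^{-1} = -M^{-1}(M - \Sigma_{\bm X})\Sigma_{\bm X}^{-1}$ and a bound on $\opnorm{\hat\Sigma_n^{-1}}$ valid on the high-probability event $\opnorm{\hat\Sigma_n - \Sigma_{\bm X}} \le \tfrac12\opnorm{\Sigma_{\bm X}^{-1}}^{-1}$), so it transfers the $n^{-1/2}$ rate to $\hat\Sigma_n^{-1}$. Combining everything yields the claim.

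\textbf{Main obstacle.} The delicate point is not the moment estimates, which are routine, but the handling of the unboundedness in $x$: one must be careful to retain exactly one factor $MHD(x)$ (not $MHD_n(x)$, whose decay constant $\lambda_{\min}(\hat\Sigma_n^{-1})$ is random) to absorb the polynomial growth, and to verify that the resulting suprema $\sup_x\|x\|^k MHD(x)$, $k=0,1,2$, are finite deterministic constants. This is elementary but is the crux that makes the uniform-in-$x$ statement work; everything else is bookkeeping.
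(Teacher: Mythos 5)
Your proposal is correct, and its first half is essentially the paper's argument: both exploit that $\frac{1}{1+a}-\frac{1}{1+b}=\frac{b-a}{(1+a)(1+b)}$ to retain exactly one factor $MHD(x)=(1+{}^t(x-\mu)\Sigma^{-1}(x-\mu))^{-1}$ that absorbs the quadratic growth of the difference of quadratic forms, after which the supremum over $x$ reduces to deterministic constants times $\opnorm{\hat\Sigma_n^{-1}-\Sigma^{-1}}$ and $\|\hat\mu_n-\mu\|$. Where you diverge is in the bookkeeping and in the moment estimates. For the uniform-in-$x$ step, the paper whitens via the change of variables $y=\Sigma^{-1/2}(x-\mu)$ and then uses $\|y\|^2/(1+\|y\|^2)\le 1$ and $2\|y\|/(1+\|y\|^2)\le 1$, whereas you expand the difference of quadratic forms by degree in $x$ and verify $\sup_x\|x\|^kMHD(x)<\infty$ for $k=0,1,2$; these are interchangeable. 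For the $\sqrt{n}$ rates, the paper invokes the multivariate CLT for $\hat\Sigma_n$ (viewed in $\mathbb{R}^{d(d+1)/2}$, which is where the fourth-moment hypothesis enters) together with a delta method based on the Fr\'echet derivative of $H\mapsto \Sigma^{1/2}H^{-1}\Sigma^{1/2}$, while you use Chebyshev on the entries of $\hat\Sigma_n-\Sigma$ and transfer the rate to the inverse via the resolvent identity $M^{-1}-\Sigma^{-1}=-M^{-1}(M-\Sigma)\Sigma^{-1}$ on the high-probability event where $\hat\Sigma_n$ is well conditioned. Your route is more elementary and arguably cleaner, since it avoids distributional limit theory altogether and only needs tightness; the paper's delta-method route yields the same $\mathcal{O}_{P}(n^{-1/2})$ conclusion but with more machinery. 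Both are complete proofs of the stated rate.
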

Finally, in Theorem \ref{MHD_vitesseCCTE}, we derive the specific rate of convergence for the $\CCTE$ based on $MHD$-depth.
\begin{theorem} \label{MHD_vitesseCCTE}
Let $P \in \mathscr{P}$, $D(\cdot,P)=MHD(\cdot,P)$ and $\alpha \in (0,1)$. Assume $P[ \mathcal{L}_D(\alpha)] >0. $
Under the assumptions of %Proposition \ref{propositiondeMHDD_h},
Theorem \ref{dnRate_MHD}, and Assumption \textbf{(H1)(ii)} and assuming moreover that there exists $r \in [2, +\infty]$ s.t. $Y \in \mathbb{L}^r(\Omega)$, it holds that
\begin{align*}  \displaystyle \left|  \widehat{\CCTE}_{D,\alpha}^{n_1,n_2}(Y,\bm X) - {\CCTE}_\alpha(Y,\bm X)  \right| = \mathcal{O}_{P,n_1,n_2}\left(n_2^{-\frac{1}{2}}\vee n_1^{-\frac{1}{2}\left(1-\frac{1}{r}\right)\left(1-\frac{1}{p}\right)}\right).
\end{align*}
%where $d_{n_1}=o(\sqrt{n_1})$.
\end{theorem}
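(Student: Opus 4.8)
The plan is to feed the $MHD$-specific results of this section into the general Corollary~\ref{corSymDiff}; the only real work is to verify Assumption \textbf{(H1)(i)} with the explicit sequence $v_{n_1}=n_1^{1/2}$.

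First I would invoke Theorem~\ref{dnRate_MHD}: since $\mathbb{E}_P[|X^{(i)}|^4]<\infty$ for all $i$, it gives $\|MHD_{n_1}-MHD\|_{\infty,\mathbb{R}^d}=\mathcal{O}_{P,n_1}(n_1^{-1/2})$, and a fortiori $\mathbb{E}_P[\|\bm X\|^2]<\infty$, so by the strong law $\widehat\mu_{n_1}\to\mu_{\bm X}$ and $\widehat\Sigma_{n_1}\to\Sigma_{\bm X}$ $\mathbb{P}$-a.s., whence $MHD_{n_1}\to MHD$ uniformly on $\mathbb{R}^d$ $\mathbb{P}$-a.s.\ --- the regime in which Proposition~\ref{propositiondeMHDD_h} operates. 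Next, fixing $\varepsilon\in(0,\alpha\wedge(1-\alpha))$, Proposition~\ref{propositiondeMHDD_h} (applicable since $\mathbb{E}_P[\|\bm X\|^2]<\infty$ and $\alpha\in(0,1)$) yields $m_\nabla>0$ together with the a.s.\ domination $d_H(\partial\mathcal{L}_{MHD}(\alpha),\partial\mathcal{L}_{n_1}(\alpha))=O(\|MHD_{n_1}-MHD\|_{\infty,\mathbb{R}^d})$. Combining this a.s.\ $O$-bound (whose constant and threshold are a.s.\ finite) with the $\mathcal{O}_P$-rate above in the standard way --- for $\eta>0$ one intersects the event $\{\|MHD_{n_1}-MHD\|_{\infty,\mathbb{R}^d}\le M'_\eta n_1^{-1/2}\}$ with the events on which that random constant and threshold lie below deterministic levels --- I obtain $d_H(\partial\mathcal{L}_{MHD}(\alpha),\partial\mathcal{L}_{n_1}(\alpha))=\mathcal{O}_{P,n_1}(n_1^{-1/2})$.

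The main step is then to convert this boundary estimate into a volume estimate. For the Mahalanobis depth the level sets are explicit: with $c:=1/\alpha-1>0$ one has $\mathcal{L}_{MHD}(\alpha)=\mathbb{R}^d\setminus E$ and $\mathcal{L}_{n_1}(\alpha)=\mathbb{R}^d\setminus E_{n_1}$, where $E=\{x:d_{\Sigma_{\bm X}}^2(x,\mu_{\bm X})<c\}$ and $E_{n_1}=\{x:d_{\widehat\Sigma_{n_1}}^2(x,\widehat\mu_{n_1})<c\}$ are solid ellipsoids, so that $\mathcal{L}_{n_1}(\alpha)\,\Delta\,\mathcal{L}_{MHD}(\alpha)=E_{n_1}\,\Delta\,E$ and $\partial\mathcal{L}_{MHD}(\alpha)=\partial E$ is a fixed compact $\mathscr{C}^\infty$ hypersurface with nowhere-vanishing normal. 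For convex bodies the Hausdorff distance of the boundaries dominates that of the bodies, so the Steiner formula yields $\lambda_d(E_{n_1}\,\Delta\,E)\le C_{n_1}\,d_H(\partial E,\partial E_{n_1})$ with $C_{n_1}$ bounded as $n_1\to\infty$ (it involves the surface areas of $E$ and of $E_{n_1}$, and $E_{n_1}\to E$). Combined with the Hausdorff rate above, this gives $\lambda_d(\mathcal{L}_{n_1}(\alpha)\,\Delta\,\mathcal{L}_{MHD}(\alpha))=\mathcal{O}_{P,n_1}(n_1^{-1/2})$, i.e.\ Assumption \textbf{(H1)(i)} holds with the increasing sequence $v_{n_1}=n_1^{1/2}$. (Alternatively one may bypass the Hausdorff machinery: $E_{n_1}$ is the image of $E$ under the affine map $x\mapsto\widehat\mu_{n_1}+\widehat\Sigma_{n_1}^{1/2}\Sigma_{\bm X}^{-1/2}(x-\mu_{\bm X})$, which is within $\mathcal{O}_{P,n_1}(\|\widehat\mu_{n_1}-\mu_{\bm X}\|+\opnorm{\widehat\Sigma_{n_1}-\Sigma_{\bm X}})=\mathcal{O}_{P,n_1}(n_1^{-1/2})$ of the identity on the bounded set $E$, so $\lambda_d(E_{n_1}\,\Delta\,E)$ is bounded directly by a dilation/erosion estimate for $E$.)

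Finally, since Assumption \textbf{(H1)(ii)} ($f\in\mathbb{L}^p(\mathbb{R}^d,\lambda_d)$, $p\in(1,+\infty]$) and $Y\in\mathbb{L}^r(\Omega)$ with $r\in[2,+\infty]$ are assumed and $P[\mathcal{L}_D(\alpha)]>0$, the sequence $v_{n_1}=n_1^{1/2}$ satisfies \textbf{(H1)}, and Corollary~\ref{corSymDiff} gives
\[
\big|\widehat{\CCTE}_{D,\alpha}^{n_1,n_2}(Y,\bm X)-\CCTE_{D,\alpha}(Y,\bm X)\big|
=\mathcal{O}_{P,n_1,n_2}\!\left(n_2^{-\frac12}\vee n_1^{-\frac12\left(1-\frac1r\right)\left(1-\frac1p\right)}\right),
\]
which is the claim. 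The step I expect to need the most care is the Hausdorff-to-volume passage --- the Steiner/convexity estimate just used: here it is harmless because the level sets are ellipsoids (indeed the direct affine-map argument settles it), but for a general depth the analogous conclusion would require an a priori regularity hypothesis on $\partial\mathcal{L}_D(\alpha)$ (e.g.\ positive reach, or a rolling-ball condition). The remaining steps are routine assembly of the results already established, the only minor subtlety being the passage from an a.s.\ $O$-bound to an $\mathcal{O}_P$-bound in the second paragraph.
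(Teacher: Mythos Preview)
Your proof is correct and follows essentially the same route as the paper: verify Assumption~\textbf{(H1)(i)} with $v_{n_1}=n_1^{1/2}$ by chaining Theorem~\ref{dnRate_MHD} and Proposition~\ref{propositiondeMHDD_h}, convert the Hausdorff bound on the boundaries into a Lebesgue bound on the symmetric difference, and then invoke Corollary~\ref{corSymDiff}. The only cosmetic difference is in the Hausdorff-to-volume step: the paper encloses $\mathcal{L}_{n_1}(\alpha)\Delta\mathcal{L}_D(\alpha)$ in the tube $\Tube(\partial\mathcal{L}_D(\alpha),\ell_{n_1})$ and appeals to Weyl's tube formula, whereas you use the Steiner formula for convex bodies (or, more directly, the affine-perturbation argument for ellipsoids) --- both yield the same first-order bound $C\,\ell_{n_1}$ and your alternative affine-map route is in fact the most elementary of the three.
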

\section{Simulations and illustrations} \label{SectionSimuIllu}
%\subsection{$\CCTE_{MHD,\alpha}(Y,\bm X)$ estimation via copulas} \label{SectionSimudata}
In this section, we provide an illustration of Theorem \ref{MHD_vitesseCCTE}. We study the estimated  $\CCTE_D$ for cost variables $Y$ which are dependent on the law of $\bm X := (X_1, X_2) \in \mathbb{R}^2$ and having the form : $$ Y= \| \bm X\|^2 + \varepsilon, $$ where $\varepsilon \sim \mathcal{N}(0,\sigma^2),~ \sigma^2 >0,$ is a gaussian noise. In our simulations we will take $\sigma^2=0.005.$ Here, we choose the squared euclidian norm which has fourth moment under $P$, defined by : $\| \bm x \|^2=|x_1|^2+|x_2|^2$ (see Figure \ref{normcostfunction}).
Moreover, we consider dependent risk factors $X_1$ and $X_2$ via a bivariate Frank Copula with Gumbel marginals with parameter $(\mu, \beta)=(0,0.25)$ and $(-0.5,0.25)$ respectively (Figure \ref{samplefig}).
\begin{figure}[H]
    \centering
    \includegraphics[scale=0.45,width=7.5cm,height=6.5cm]{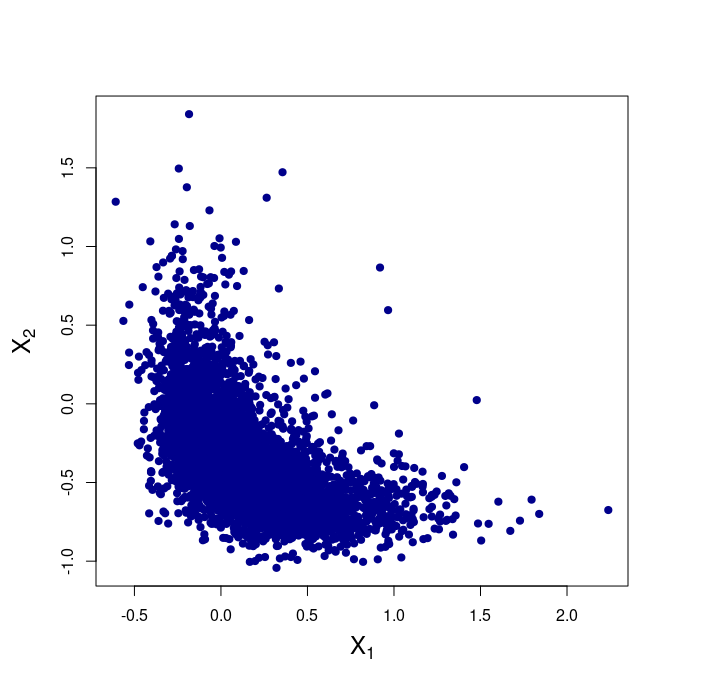}
    \caption{Sample of dependent Gumbel marginals via a Frank Copula.} \label{samplefig}
\end{figure}
 %However, in practice, regression functions can be
  Note that the above example satisfies the assumptions of Theorem \ref{MHD_vitesseCCTE}. 
 \begin{figure}[H]
    \centering
    \includegraphics[scale=0.5]{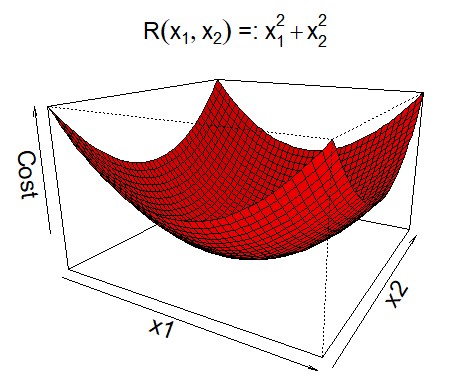}
    \caption{\textit{Monotonic} function : $R(x_1,x_2) = |x_1|^2+|x_2|^2.$}
    \label{normcostfunction}
\end{figure} 
Here we compare $\widehat{\CCTE}_{MHD,\alpha}^{n_1,n_2}$ with the theoretical $\CCTE_{MHD,\alpha}$ for \textit{Mahalanobis} depth. For the sake of simplicity, we take $n_1=n_2=n$. Assuming that $p,r \approx + \infty$ with our sample, we obtain that $|\widehat{\CCTE}_{MHD,\alpha}^n-\CCTE_{MHD,\alpha}|$ decays to zero at most with a convergence rate $o(\sqrt{n}).$\\
 
We provide a deterministic approximation of the true mean vector and covariance matrix, $m$ and $\Sigma$ respectively. However, due to the complexity of the level-sets as domains of integration in the computation of the $\CCTE_D$, we perform a Monte Carlo procedure to fix the "true" value of the $\CCTE_D$ based on a sample of size $10^8$ (without noise), that is:
$$ \displaystyle \frac{ \displaystyle \sum_{i=1}^{10^8}R(\bm X_i) \mathds{1}_{\bm X_i \in \mathcal{L}_{MHD}(\alpha)}}{\displaystyle  \sum_{i=1}^{10^8}\mathds{1}_{\bm X_i \in \mathcal{L}_{MHD}(\alpha)}}.$$ 
Recall that Theorem \ref{MHD_vitesseCCTE} illustrates convergence rates in probability, however, for the sake of computational simplicity, we provide $\mathbb{L}^1$-estimation for the $\CCTE_D$ (which implies convergence results in probability). More precisely, we denote $\overline{\widehat{\CCTE}_{\alpha}^n}:=\overline{\widehat{\CCTE}_{\alpha, MHD}^n}$ the mean of the $\widehat{\CCTE}_{MHD,\alpha}^n$ based on $400$ simulations. The empirical standard deviation is
$$\hat{\sigma} = \displaystyle \sqrt{\frac{1}{399} \sum_{j=1}^{400} \left(\widehat{\CCTE}_{\alpha, j}^n- \overline{\widehat{\CCTE}_{\alpha}^n}\right)^2},$$
while the relative mean absolute error associated to $\widehat{\CCTE_{\alpha}^n}$, denoted by RMAE, is defined as follows : $$ \RMAE :={\RMAE}_{n,\alpha} = \displaystyle \frac{1}{400} \sum_{j=1}^{400} \frac{\big|\widehat{\CCTE}_{\alpha, j}^n-\CCTE_{MHD,\alpha}(Y,\bm X))\big|}{|\CCTE_{MHD,\alpha}(Y, \bm X)|}. $$
Note that, most of the times, one uses the Relative Mean Squared Error (RMSE) rather than the RMAE. However, since our results are presented with absolute value (particularly Theorem \ref{MHD_vitesseCCTE}), we work here with the RMAE for which we provide $\mathbb{L}^1$-estimation as well.
%$$ \RMSE := \displaystyle \sqrt{ \frac{1}{400} \sum_{j=1}^{400} \frac{\big|\widehat{\CCTE}_{\alpha, j}^n-\CCTE_\alpha(Y,\bm X))\big|^2}{|\CCTE_\alpha(Y, \bm X)|^2}}. $$
%où $\widehat{\CCTE}_{\alpha, j}^n$ est une $j$-ème estimation de la $\CCTE$ basée sur un échantillon de taille $n$, c'est-à-dire correspondant à la quantité : 
% $$ \widehat{\CCTE}_{\alpha,j}^n := \left(\displaystyle \frac{ \displaystyle \sum_{i=1}^{n}Y_i \mathds{1}_{\bm X_i \in \mathcal{L}_{n}(\alpha)}}{\displaystyle  \sum_{i=1}^{n}\mathds{1}_{\bm X_i \in \mathcal{L}_{n}(\alpha)}} \right)_j,$$
\\

In Table \ref{CCTE_RMAE_MHD}, we provide the above estimations for different values of $\alpha$ and sample size $n$. 
\begin{table}[H]\centering
\resizebox{12.2cm}{4cm}{
 \begin{tabular}{|c|lccccc|}
\hline 
 {\large\strut}  {$n$ } &  & $\alpha=0.1$ & $\alpha=0.2$ &$\alpha=0.5$ &$\alpha=0.8$ &$\alpha=0.9$ \tabularnewline
 {\large\strut}  & &   $\CCTE=1.4237$ & $\CCTE=0.8831$ &  $\CCTE=0.4804$ &$\CCTE=0.3831$ & $\CCTE=0.3661$ \tabularnewline
\hline\hline
\multirow{3}{0.8cm}{100}
 {\large\strut}  & {Mean } &   1.2744 &	0.8568 &	0.4706 & 0.3806 & 0.3636
  \\ 
 {\large\strut}  & {$\widehat{\sigma}$ }&0.6596 & 0.2678 & 0.0716 & 0.045 & 0.042 \tabularnewline
{\large\strut}  & {RMAE} & 0.3733 &	0.2360 &	 0.1217 & 0.0954 & 0.0933
  \\  
  %{\large\strut}  & {Rate} &  &	 &	 & & 
  %\\  
  \hline \hline
\multirow{3}{0.8cm}{1000}
 {\large\strut}  & {Mean } & 1.4125	& 0.8758 &0.4801	& 0.3835 & 0.3664 \\ % \cline{2-7}
 {\large\strut}  & {$\widehat{\sigma}$ } & 0.1923 &0.0782 & 0.0233 & 0.0153 & 0.0143\tabularnewline
 {\large\strut}  & {RMAE} &   0.1056 &	0.0696 &	0.0381 &	0.0318 & 0.0310
  \\  
%  {\large\strut}  & {Rate} &  0.533 &	0.058 &	0.014 &	0.01 & 0.01
 % \\  
\hline\hline
\multirow{3}{0.8cm}{5000}
 {\large\strut}  &{Mean} &  1.4213 & 	0.8808	 & 0.4804	& 0.3832& 0.3661 \tabularnewline %\cline{2-7}
 {\large\strut}  & {$\widehat{\sigma}$ } & 0.0855 & 0.0377 & 0.0106 & 0.0065 & 0.0061 \tabularnewline
 {\large\strut}  & {RMAE} &  0.0476 &	0.0338 & 0.018 &	0.0138 & 0.0134
  \\  
  %{\large\strut}  & {Rate} &   0.376 &	0.04 &	0.01 &	0.007 & 0.007
  %\\  
\hline \hline
\multirow{3}{0.8cm}{10000}
 {\large\strut}  & {Mean} & 1.4178	& 0.8807 & 0.4799	& 0.3828 & 0.3659 \\  %\cline{2-7}
 {\large\strut}  & {$\widehat{\sigma}$ } &0.0625 &0.0271 & 0.0071 & 0.0046&0.0043\tabularnewline
 {\large\strut}  & {RMAE} &  0.0361 &	0.0241 &	0.0117 &	0.0095 &  0.0094
  \\  
 % {\large\strut}  & {Rate} &  0.275 &	0.028 &	0.007 &	0.005 & 0.005
  %\\  
\hline \hline
\multirow{3}{0.7cm}{50000}
 {\large\strut}  & {Mean} & 1.4231&0.8829&0.4807&0.3833&0.3663\\  %\cline{2-7}
 {\large\strut}  & {$\widehat{\sigma}$ } &0.0283 &0.0118&0.0033&0.002&0.0019\tabularnewline
 {\large\strut}  & {RMAE} &0.0158 &0.0106& 0.0055&0.0042&0.0041
  \\  
 % {\large\strut}  & {Rate} &  0.275 &	0.028 &	0.007 &	0.005 & 0.005
  %\\  
\hline
\end{tabular}
}
\caption{$\mathbb{L}^1$-estimation of $\CCTE_{\alpha,MHD} (\mathbf{X},Y)$ and associated $\RMAE$ for bivariate Frank Copulas with Gumbel marginals.}\label{CCTE_RMAE_MHD} 
\end{table}
According to the results in Table \ref{CCTE_RMAE_MHD}, we observe that the error $\RMAE_n$ decreases as the sample size $n$ increases, as one may expect. Besides, remark that for low levels $\alpha$ ($\alpha = 0.1$) and sample sizes $n$ ($n=100,1000$), the value of $\RMAE$ is relatively high. This may be explained by the fact that, for small values of $\alpha$, there is fewer data to observe so that it becomes more difficult to estimate the mean $\widehat{\CCTE}$ as well as the $\alpha$-level set. Indeed, for low levels $\alpha$, the constant $A=2/m_{\nabla}$ is large since $m_{\nabla}$ approaches zero (see the proof of Proposition \ref{PropCasalRodriguez} and Theorem \ref{theoVitessedH} in Section \ref{proofs section}), meaning that the constant bounding the error ($\RMAE$) becomes large.  \\

In Table \ref{CCTEratesMHD}, we also provide $\mathbb{L}^1$-convergence rates : $V_{n,\alpha,\delta} = v_n(\delta) \cdot \RMAE_{n,\alpha},$ with  $v_n(\delta) = n^{\frac{1}{2}-\delta}$ for different values of $\delta$ (see Figure \ref{graph_vitesseMHD}).

\begin{table}[H]
\centering
 \begin{tabular}{|c|c||lcccc|}
\hline 
$\alpha$ & \diagbox{$\delta$}{$n$} & 100 & 1000 & 5000 &10000 &50000 \\
\hline
\multirow{4}{0.5cm}{0.1}
 & 0.05 &2.9656 & 2.3641 & 2.1971 & 2.2773 & 2.0529  \\ 
  & 0.01 & 3.5654 & 3.1164  & 3.0889 & 3.2916  & 3.1646\\
 & 0 & 3.7335 & 3.3393 & 3.3635& 3.6092 &  3.5262\\
 & -0.01 & 3.9094 &  3.5782& 3.6625& 3.9574 & 3.9292 \\
\hline \hline 
\multirow{4}{0.5cm}{0.2}
 & 0.05 & 1.8749 & 1.5589& 1.5602&1.5217 &    1.3821 \\ 
 & 0.01 & 2.2542& 2.0551 & 2.1935 & 2.1996 &  2.1305 \\
& 0 &  2.3604& 2.2021 & 2.3885 & 2.4118& 2.374 \\
& -0.01 & 2.4716 & 2.3595 & 2.6009  & 2.6445 &2.6452\\
\hline \hline 
\multirow{4}{0.5cm}{0.5}
 & 0.05 & 0.9666 & 0.8522 & 0.8298& 0.7407 &  0.7124 \\ 
& 0.01 & 1.1411 & 1.1234& 1.1666 &1.0707 & 1.0982\\
& 0 & 1.2169 & 1.2038& 1.2703& 1.1740& 1.2236 
 \\
& -0.01 &  1.2742 & 1.2899& 1.3832 &1.2872 & 1.3635 
\\
\hline \hline 
\multirow{4}{0.5cm}{0.8}
 & 0.05 & 0.7577   & 0.7119  &  0.6362 & 0.5989 &  0.5481 \\ 
 & 0.01 & 0.9111 &  0.9385 & 0.8945   & 0.8656& 0.845\\
& 0 & 0.9539 & 1.0056 & 0.9740 & 0.9491& 0.9415  \\
& -0.01 & 0.9988 &  1.0775  & 1.0606 & 1.0407 &1.0491 \\
\hline \hline
\multirow{4}{0.5cm}{0.9}
 & 0.05 & 0.7415 & 0.6949 & 0.6200 & 0.5952 & 0.5281  \\ 
 & 0.01 & 0.8914& 0.9160 & 0.8717 & 0.8603 & 0.8141\\
& 0 & 0.9334 & 0.9815 & 0.9492 & 0.9433&0.9072 \\
& -0.01 & 0.9774 & 1.0517 &  1.0335 & 1.0344 & 1.0108\\
\hline
\end{tabular}
\caption{Estimated $n^{\frac{1}{2}-\delta} \cdot \RMAE_{n,\alpha}$ based on $MHD$ for bivariate Frank Copulas with Gumbel marginals.} \label{CCTEratesMHD} 
\end{table}
\begin{figure}[H]
    \centering
    \includegraphics[scale=0.3]{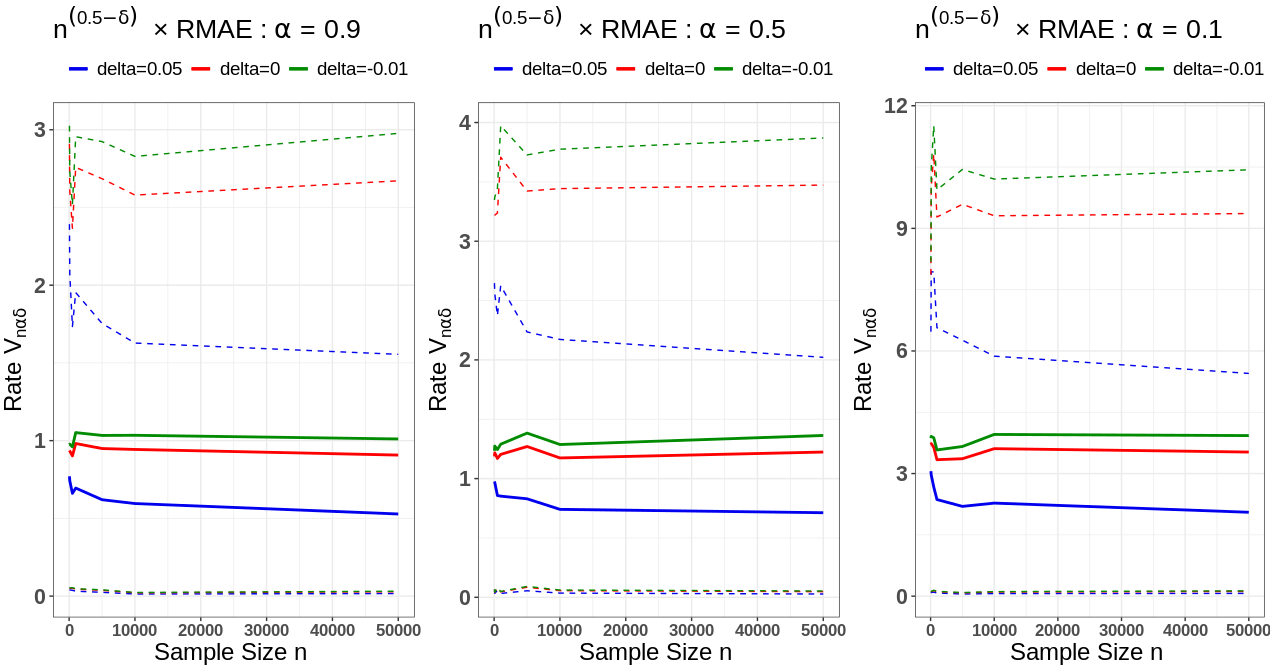}
    \caption{Estimation of the convergence rates $V_{n,\alpha,\delta}$ associated to RMAE, based on $MHD$ for bivariate Frank Copulas with Gumbel marginals.}
    \label{graph_vitesseMHD}
\end{figure}
In Table \ref{CCTEratesMHD} and Figure \ref{graph_vitesseMHD}, the parameter $\delta$ allows us to check that the critical regime is $\delta =0$ which corresponds to the $n^{1/2}$-convergence rate associated to the $\RMAE$. 
%\begin{figure}[H]
  %  \centering
  % \includegraphics[scale=0.35]{rate_graph.png}
%\end{figure}
%\subsection{A real data study} \label{Section real data}
\section{Proofs of Section \ref{mainresults}} \label{proofs section}
\textbf{Proofs of Section \ref{general CCTE section}.}\\
%In what follows, the results are proved in the sense of convergence in probability, which directly implies the $\mathcal{O}_P$ bounds.
\begin{proof}[Proof of Theorem \ref{vitesseCCTEproba}]
We can write \begin{align*}
    \displaystyle  &\big|  \widehat{\CCTE}_{D,\alpha}^{n_1,n_2}(Y,\bm X) - {\CCTE}
    _{D,\alpha}(Y,\bm X)  \big| \cdot \mathds{1}_{P_{\tilde{S}_{n_1}}[\mathcal{L}_{n_1}(\alpha)]>0}\\
    &= \displaystyle \left| \frac{\frac{1}{n_2} \sum_{i=1}^{n_2} Y_i \mathds{1}_{\bm X_i\in \mathcal{L}_{n_1}(\alpha)}}{\frac{1}{n_2}\sum_{i=1}^{n_2} \mathds{1}_{\bm X_i\in \mathcal{L}_{n_1}(\alpha)}}- \mathbb{E}[Y|\bm X \in \mathcal{L}_D(\alpha)]   \right| \cdot \mathds{1}_{P_{\tilde{S}_{n_1}}[\mathcal{L}_{n_1}(\alpha)]>0} \\
    & \leq \displaystyle \left|  \frac{\frac{1}{n_2} \sum_{i=1}^{n_2} Y_i \mathds{1}_{\bm X_i\in \mathcal{L}_{n_1}(\alpha)}}{\frac{1}{n_2}\sum_{i=1}^{n_2} \mathds{1}_{\bm X_i\in \mathcal{L}_{n_1}(\alpha)}} - \mathbb{E}_{\tilde{S}_{n_1}}[Y|\bm X \in \mathcal{L}_{n_1}(\alpha)]   \right|\cdot \mathds{1}_{P_{\tilde{S}_{n_1}}[\mathcal{L}_{n_1}(\alpha)]>0} \\
     &~~+ \displaystyle \left| \mathbb{E}_{\tilde{S}_{n_1}}[Y|\bm X \in \mathcal{L}_{n_1}(\alpha)] - \mathbb{E}[Y|\bm X \in \mathcal{L}_D(\alpha)]   \right|\cdot \mathds{1}_{P_{\tilde{S}_{n_1}}[\mathcal{L}_{n_1}(\alpha)]>0}.
\end{align*}

The proof of Theorem \ref{vitesseCCTEproba} is a modified version of the proof of Theorem 5.1 in \citet{DBLS2015}. The latter focuses on distribution functions instead of depth functions. Besides, in the proof of Theorem \ref{vitesseCCTEproba}, we show that $\mathds{1}_{P_{\tilde{S}_{n_1}}[\mathcal{L}_{n_1}(\alpha)]>0}$ converges to one in probability.\\

First recall that, in the following, probability measures involving events which depend on $\mathcal{L}_{n_1}(\alpha)$ are conditional expectations to the sample $\tilde{S}_{n_1}$. For notational convenience, we recall that $$P_{\tilde{S}_{n_1}}[\mathcal{L}_{n_1}(\alpha)]:=\mathbb{P}[\bm X \in \mathcal{L}_{n_1}(\alpha)]$$ which is a random variable. Moreover, note that the convergence to zero in probability implies directly the $\mathcal{O}_P(1)$ result.\\

The proof of Theorem \ref{vitesseCCTEproba} is based on the two following preliminary results, Lemma \ref{lemmeccte1} and Lemma \ref{lemmeccte2}. 
\end{proof}
\begin{lemma}\label{lemmeccte1}
Under assumptions of Theorem \ref{vitesseCCTEproba}, it holds that
\begin{equation*}
 \left| \mathbb{E}_{\tilde{S}_{n_1}}[Y|\bm X \in \mathcal{L}_{n_1}(\alpha)] - \mathbb{E}[Y|\bm X \in \mathcal{L}_D(\alpha)]   \right| = \mathcal{O}_{P,n_1}\left( v_{n_1}^{-\left(1-\frac{1}{r}\right)}\right).
\end{equation*}
\end{lemma}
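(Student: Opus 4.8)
The plan is to compare the two conditional expectations by writing each as a ratio $\mathbb{E}[Y\mathds{1}_A]/P[A]$ and estimating numerator and denominator separately, with $A = \mathcal{L}_{n_1}(\alpha)$ or $\mathcal{L}_D(\alpha)$. Write $a_{n_1} := \mathbb{E}_{\tilde S_{n_1}}[Y\mathds{1}_{\bm X \in \mathcal{L}_{n_1}(\alpha)}]$, $a := \mathbb{E}[Y\mathds{1}_{\bm X \in \mathcal{L}_D(\alpha)}]$, $b_{n_1} := P_{\tilde S_{n_1}}[\mathcal{L}_{n_1}(\alpha)]$, $b := P[\mathcal{L}_D(\alpha)]$. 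The key elementary estimate is the identity
\[
\frac{a_{n_1}}{b_{n_1}} - \frac{a}{b} = \frac{a_{n_1}-a}{b_{n_1}} + \frac{a}{b}\cdot\frac{b-b_{n_1}}{b_{n_1}},
\]
so that it suffices to control $|a_{n_1}-a|$ and $|b_{n_1}-b|$ and to bound $1/b_{n_1}$ from above with probability tending to one. For the denominator difference, $|b_{n_1}-b| = |P[\bm X \in \mathcal{L}_{n_1}(\alpha)] - P[\bm X \in \mathcal{L}_D(\alpha)]| \le P_{\tilde S_{n_1}}[\mathcal{L}_{n_1}(\alpha)\,\Delta\,\mathcal{L}_D(\alpha)]$, which is $\mathcal{O}_{P,n_1}(v_{n_1}^{-1})$ by assumption \textbf{(H0)}. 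In particular $b_{n_1} \to b > 0$ in probability, so $\mathds{1}_{b_{n_1}>0}\to 1$ in probability and $1/b_{n_1} = \mathcal{O}_{P,n_1}(1)$; this also disposes of the indicator in the statement and establishes the claim announced in the proof sketch of Theorem \ref{vitesseCCTEproba}.

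The main step is the numerator: $|a_{n_1}-a| = |\mathbb{E}[Y(\mathds{1}_{\bm X \in \mathcal{L}_{n_1}(\alpha)} - \mathds{1}_{\bm X \in \mathcal{L}_D(\alpha)})]| \le \mathbb{E}[|Y|\,\mathds{1}_{\bm X \in \mathcal{L}_{n_1}(\alpha)\,\Delta\,\mathcal{L}_D(\alpha)}]$. Here I would apply Hölder's inequality with exponents $r$ and $r'=r/(r-1)$: since $Y\in\mathbb{L}^r(\Omega)$,
\[
\mathbb{E}\big[|Y|\,\mathds{1}_{\bm X \in \mathcal{L}_{n_1}(\alpha)\,\Delta\,\mathcal{L}_D(\alpha)}\big] \le \|Y\|_r \cdot \big(P_{\tilde S_{n_1}}[\mathcal{L}_{n_1}(\alpha)\,\Delta\,\mathcal{L}_D(\alpha)]\big)^{1-\frac1r}.
\]
Note the probability appearing on the right is the conditional one $P_{\tilde S_{n_1}}[\cdot]$, because $\mathcal{L}_{n_1}(\alpha)$ depends on $\tilde S_{n_1}$ and the expectation defining $a_{n_1}$ is $\mathbb{E}_{\tilde S_{n_1}}$; the independence of $\tilde S_{n_1}$ and $S_{n_2}$ (and of $\bm X$) is what legitimizes treating $\mathcal{L}_{n_1}(\alpha)$ as a fixed set inside this conditional expectation. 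By \textbf{(H0)} the bracketed quantity is $\mathcal{O}_{P,n_1}(v_{n_1}^{-1})$, hence raising to the power $1-\frac1r$ gives $|a_{n_1}-a| = \mathcal{O}_{P,n_1}(v_{n_1}^{-(1-1/r)})$. The case $r=\infty$ is handled separately and more simply by bounding $|Y|$ by $\|Y\|_\infty$ and using $P_{\tilde S_{n_1}}[\mathcal{L}_{n_1}(\alpha)\,\Delta\,\mathcal{L}_D(\alpha)] = \mathcal{O}_{P,n_1}(v_{n_1}^{-1})$ directly.

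Combining: the first term $\frac{a_{n_1}-a}{b_{n_1}}$ is $\mathcal{O}_{P,n_1}(1)\cdot\mathcal{O}_{P,n_1}(v_{n_1}^{-(1-1/r)}) = \mathcal{O}_{P,n_1}(v_{n_1}^{-(1-1/r)})$, and the second term $\frac{a}{b}\cdot\frac{b-b_{n_1}}{b_{n_1}}$ is $\mathcal{O}_{P,n_1}(1)\cdot\mathcal{O}_{P,n_1}(v_{n_1}^{-1}) = \mathcal{O}_{P,n_1}(v_{n_1}^{-1})$, which is negligible compared to the first since $v_{n_1}$ is increasing and $1-\frac1r \le 1$. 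This yields the stated rate $\mathcal{O}_{P,n_1}(v_{n_1}^{-(1-1/r)})$. The only mild subtlety — and the point I would be most careful about — is bookkeeping the conditioning: making sure that all probabilities of events involving $\mathcal{L}_{n_1}(\alpha)$ are the random variables $P_{\tilde S_{n_1}}[\cdot]$, that the $\mathcal{O}_{P,n_1}$ notation is used consistently with this random-variable interpretation, and that the product of two $\mathcal{O}_{P}$ terms is again $\mathcal{O}_{P}$ with the product rate (an easy union-bound argument on the defining events, but worth stating once).
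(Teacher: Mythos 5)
Your proposal is correct and follows essentially the same route as the paper's proof: the same numerator/denominator decomposition of the two conditional expectations, the bound $|b_{n_1}-b|\le P_{\tilde S_{n_1}}[\mathcal{L}_{n_1}(\alpha)\,\Delta\,\mathcal{L}_D(\alpha)]$ for the denominators, H\"older with exponents $r$ and $r/(r-1)$ for the numerators, and the observation that $P_{\tilde S_{n_1}}[\mathcal{L}_{n_1}(\alpha)]\to P[\mathcal{L}_D(\alpha)]>0$ in probability to handle the positivity of the denominator. The only differences are cosmetic (your algebraic identity versus the paper's common-denominator bound), and your closing remarks on the conditional-probability bookkeeping match what the paper makes explicit.
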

\begin{proof}[Proof of Lemma \ref{lemmeccte1}]
On the event $\left\lbrace P_{\tilde{S}_{n_1}}[\mathcal{L}_{n_1}(\alpha)]>0 \right\rbrace$, it holds
\begin{multline*}
    v_{n_1}^{1-\frac{1}{r}} \displaystyle \left| \mathbb{E}_{\tilde{S}_{n_1}}[Y|\bm X \in \mathcal{L}_{n_1}(\alpha)] - \mathbb{E}[Y|\bm X \in \mathcal{L}_D(\alpha)]   \right| \\
    =v_{n_1}^{1-\frac{1}{r}} \left| \displaystyle \frac{ \mathbb{E}_{\tilde{S}_{n_1}}[Y\mathds{1}_{\bm X \in \mathcal{L}_{n_1}(\alpha)}]}{P_{\tilde{S}_{n_1}}[\mathcal{L}_{n_1}(\alpha)]} -  \frac{ \mathbb{E}[Y\mathds{1}_{\bm X \in \mathcal{L}_D(\alpha)}]}{P[\mathcal{L}_D(\alpha)]} \right| \hspace{5cm}\\
      \leq \displaystyle \frac{v_{n_1}^{1-\frac{1}{r}}}{P_{\tilde{S}_{n_1}}[\mathcal{L}_{n_1}(\alpha)]P[ \mathcal{L}_D(\alpha)]} \times \bigg( \hspace{7.5cm} \\ P[\mathcal{L}_D(\alpha)]  \Big| \mathbb{E}_{\tilde{S}_{n_1}}[Y\mathds{1}_{\bm X \in \mathcal{L}_{n_1}(\alpha)}]
        - \mathbb{E}[Y\mathds{1}_{\bm X \in \mathcal{L}_D(\alpha)}] \Big| \\
       + \displaystyle \left| P[\mathcal{L}_D(\alpha)]- P_{\tilde{S}_{n_1}}[\mathcal{L}_{n_1}(\alpha)] \right| \left|\mathbb{E}[Y\mathds{1}_{\bm X \in \mathcal{L}_D(\alpha)}]\right|
      \bigg) \\
      \leq \displaystyle \frac{v_{n_1}^{1-\frac{1}{r}}}{P_{\tilde{S}_{n_1}}[\mathcal{L}_{n_1}(\alpha)]P[\mathcal{L}_D(\alpha)]} \times \bigg( \Big| \mathbb{E}_{\tilde{S}_{n_1}}[Y \mathds{1}_{\bm X \in \mathcal{L}_{n_1}(\alpha)}]  - \mathbb{E}[Y \mathds{1}_{\bm X \in \mathcal{L}_D(\alpha)}]  \Big| \hspace{1cm}\\
       + \displaystyle \mathbb{E}[|Y|]\Big| P[\mathcal{L}_D(\alpha)]- P_{\tilde{S}_{n_1}}[\mathcal{L}_{n_1}(\alpha)] \Big| \bigg).\hspace{1cm}
\end{multline*}
Under Assumption \textbf{(H0)} and since $v_{n_1}^{-1} \to 0$ as $n_1 \to \infty$, it holds that $$P_{\tilde{S}_{n_1}}[\mathcal{L}_{n_1}(\alpha)\Delta \mathcal{L}_D(\alpha)] \xrightarrow[n_1 \to \infty]{\mathbb{P}}0,$$ so that
 $$P_{\tilde{S}_{n_1}}[ \mathcal{L}_{n_1}(\alpha)] \xrightarrow[n_1 \to \infty]{\mathbb{P}} P[\mathcal{L}_D(\alpha)]>0,$$
 and \begin{equation} \label{proba1event}
     \mathbb{P}\left[\left\lbrace P_{\tilde{S}_{n_1}}[\mathcal{L}_{n_1}(\alpha)]>0 \right\rbrace \right] \xrightarrow[n_1 \to \infty]{} 1.
 \end{equation} 
On the one hand, 
\begin{align*}
   v_{n_1}^{1-\frac{1}{r}}\left|P_{\tilde{S}_{n_1}}[\mathcal{L}_{n_1}(\alpha)]-P[\mathcal{L}_D(\alpha)] \right|
    \leq  v_{n_1}^{1-\frac{1}{r}} P_{\tilde{S}_{n_1}}[ \mathcal{L}_D(\alpha) \Delta \mathcal{L}_{n_1}(\alpha)],
\end{align*}
so we obtain\begin{equation} \label{vi1}
    v_{n_1}^{1-\frac{1}{r}} \left|P_{\tilde{S}_{n_1}}[\mathcal{L}_{n_1}(\alpha)]-P[\mathcal{L}_D(\alpha)] \right| \xrightarrow[n_1 \to \infty]{\mathbb{P}} 0.
\end{equation}
On the other hand, using Hölder inequality
\begin{align*}
   v_{n_1}^{1-\frac{1}{r}} \big| \mathbb{E}_{\tilde{S}_{n_1}}[Y \mathds{1}_{\bm X \in \mathcal{L}_n(\alpha)}]&  - \mathbb{E}[Y \mathds{1}_{\bm X \in \mathcal{L}_D(\alpha)}]  \big| \\ \notag
    \leq & v_{n_1}^{1-\frac{1}{r}} \mathbb{E}_{\tilde{S}_{n_1}}\big[|Y| \mathds{1}_{\bm X \in \mathcal{L}_{n_1}(\alpha)\Delta \mathcal{L}_D(\alpha)}\big]  \\ \notag
    \leq& v_{n_1}^{1-\frac{1}{r}} \mathbb{E}\big[|Y|^r \big]^{\frac{1}{r}}\mathbb{E}_{\tilde{S}_{n_1}}\big[\mathds{1}_{\bm X \in \mathcal{L}_{n_1}(\alpha)\Delta \mathcal{L}_D(\alpha)}\big]^{1-\frac{1}{r}}  \\ \notag \\
    = & v_{n_1}^{1-\frac{1}{r}}P_{\tilde{S}_{n_1}}[\mathcal{L}_{n_1}(\alpha) \Delta \mathcal{L}_D(\alpha)]^{1-\frac{1}{r}} \|Y\|_{\mathbb{L}^r(\Omega)}  .
\end{align*}
Since $v_{n_1}P_{\tilde{S}_{n_1}}[\mathcal{L}_{n_1}(\alpha) \Delta \mathcal{L}_D(\alpha)]=\mathcal{O}_{P,n_1}(1)$, it holds
\begin{equation} \label{vi2}
   v_{n_1}^{1-\frac{1}{r}} \big| \mathbb{E}_{\tilde{S}_{n_1}}[Y \mathds{1}_{\mathcal{L}_{n_1}(\alpha)}]  - \mathbb{E}[Y \mathds{1}_{\mathcal{L}_D(\alpha)}]  \big| = \mathcal{O}_{P,n_1}(1).
\end{equation}
Since the convergence to zero in probability implies the $\mathcal{O}_P(1)$ result, the lemma follows directly from (\ref{proba1event}, (\ref{vi1}), and (\ref{vi2}).
The case $r=+\infty$ is analogous.\\
\end{proof}
\begin{lemma}\label{lemmeccte2}
Under assumptions of Theorem \ref{vitesseCCTEproba}, we obtain \begin{equation*}
    \displaystyle \left|  \frac{\frac{1}{n_2} \sum_{i=1}^{n_2} Y_i \mathds{1}_{\bm X_i\in \mathcal{L}_{n_1}(\alpha)}}{\frac{1}{n_2}\sum_{i=1}^{n_2} \mathds{1}_{\bm X_i\in \mathcal{L}_{n_1}(\alpha)}} - \mathbb{E}[Y|\bm X \in \mathcal{L}_{n_1}(\alpha)]   \right|= \mathcal{O}_{P,n_1,n_2}(n_2^{-\frac{1}{2}}).
\end{equation*} 
\end{lemma}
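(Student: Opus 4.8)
The plan is to condition on the auxiliary sample $\tilde S_{n_1}$, which freezes $L:=\mathcal{L}_{n_1}(\alpha)$ into a deterministic Borel set, and then to treat the left-hand side as the error of a ratio estimator built from the i.i.d.\ sample $((Y_i,\bm X_i))_{i=1}^{n_2}$, which is independent of $\tilde S_{n_1}$. Write $a_{n_1}:=\mathbb{E}_{\tilde S_{n_1}}[Y\mathds{1}_{\bm X\in L}]$, $b_{n_1}:=P_{\tilde S_{n_1}}[\bm X\in L]$, $\bar A_{n_2}:=\tfrac1{n_2}\sum_{i=1}^{n_2}Y_i\mathds{1}_{\bm X_i\in L}$ and $\bar B_{n_2}:=\tfrac1{n_2}\sum_{i=1}^{n_2}\mathds{1}_{\bm X_i\in L}$, so that $\mathbb{E}_{\tilde S_{n_1}}[Y\mid \bm X\in L]=a_{n_1}/b_{n_1}$ on $\{b_{n_1}>0\}$. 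On $\{\bar B_{n_2}>0\}$ I will use the elementary identity
\begin{equation*}
\frac{\bar A_{n_2}}{\bar B_{n_2}}-\frac{a_{n_1}}{b_{n_1}}
=\frac{\bar A_{n_2}-a_{n_1}}{\bar B_{n_2}}-\frac{a_{n_1}}{b_{n_1}}\cdot\frac{\bar B_{n_2}-b_{n_1}}{\bar B_{n_2}},
\end{equation*}
reducing the problem to controlling the centred averages $\bar A_{n_2}-a_{n_1}$ and $\bar B_{n_2}-b_{n_1}$ and to bounding $\bar B_{n_2}$ away from $0$.

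For the fluctuations I would use second moments. Conditionally on $\tilde S_{n_1}$ the summands are i.i.d., hence $\mathbb{E}_{\tilde S_{n_1}}[(\bar A_{n_2}-a_{n_1})^2]=n_2^{-1}\mathrm{Var}(Y\mathds{1}_{\bm X\in L})\le n_2^{-1}\mathbb{E}[Y^2]$, which is finite because $r\ge 2$ gives $Y\in\mathbb{L}^2(\Omega)$, and — crucially — this bound is free of $\tilde S_{n_1}$; likewise $\mathbb{E}_{\tilde S_{n_1}}[(\bar B_{n_2}-b_{n_1})^2]\le \tfrac1{4n_2}$. Conditional Chebyshev then yields, uniformly in $\tilde S_{n_1}$, tail bounds of the form $\mathbb{P}[\,|\bar A_{n_2}-a_{n_1}|\ge t\mid\tilde S_{n_1}]\le \mathbb{E}[Y^2]/(n_2t^2)$ and analogously for $\bar B_{n_2}$. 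The delicate point is that $b_{n_1}$ is random and could a priori be small; here I import from the proof of Lemma~\ref{lemmeccte1} the fact that $P_{\tilde S_{n_1}}[\mathcal{L}_{n_1}(\alpha)]\xrightarrow[n_1\to\infty]{\mathbb{P}}P[\mathcal{L}_D(\alpha)]>0$. Setting $c_0:=\tfrac12 P[\mathcal{L}_D(\alpha)]$, the event $E_{n_1}:=\{b_{n_1}\ge c_0\}$ has probability tending to $1$, and on $E_{n_1}$ the same Chebyshev estimate also forces $\bar B_{n_2}\ge c_0/2$ with conditional probability at least $1-1/(c_0^2n_2)$.

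Finally I would assemble the bound. On the intersection of $E_{n_1}$ with $\{\bar B_{n_2}\ge c_0/2\}$, and using $|a_{n_1}|\le\mathbb{E}[|Y|]<\infty$, the identity above gives
\begin{equation*}
\Big|\frac{\bar A_{n_2}}{\bar B_{n_2}}-\frac{a_{n_1}}{b_{n_1}}\Big|
\le \frac{2}{c_0}\,|\bar A_{n_2}-a_{n_1}|+\frac{2\,\mathbb{E}[|Y|]}{c_0^{2}}\,|\bar B_{n_2}-b_{n_1}|.
\end{equation*}
A routine $\varepsilon$-budget argument — allocate $\varepsilon/3$ to the complement of $E_{n_1}$ (absorbed once $n_1\ge N_\varepsilon$), $\varepsilon/3$ to $\{\bar B_{n_2}<c_0/2\}$ (once $n_2\ge N_\varepsilon$), and pick $M_\varepsilon$ large enough that the two conditional Chebyshev contributions are each $\le\varepsilon/6$ — then shows $\mathbb{P}[\,|\cdots|\ge M_\varepsilon n_2^{-1/2}]\le\varepsilon$ whenever $\min(n_1,n_2)\ge N_\varepsilon$, which is exactly the asserted $\mathcal{O}_{P,n_1,n_2}(n_2^{-1/2})$; the case $r=+\infty$ is identical, replacing $\mathbb{E}[Y^2]$ by $\|Y\|_{\mathbb{L}^\infty(\Omega)}^2$. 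The main obstacle is the bookkeeping of the interplay of the two sample sizes in this joint stochastic-dominance statement, and it is precisely resolved by the two features highlighted above: the variance bounds do not involve $\tilde S_{n_1}$, and the lower bound on $b_{n_1}$ comes for free from Lemma~\ref{lemmeccte1}.
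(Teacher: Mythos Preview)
Your argument is correct and follows essentially the same route as the paper: the same ratio decomposition, the same conditional Chebyshev bounds with variance estimates uniform in $\tilde S_{n_1}$, and the same use of $P_{\tilde S_{n_1}}[\mathcal{L}_{n_1}(\alpha)]\to P[\mathcal{L}_D(\alpha)]>0$ (imported from Lemma~\ref{lemmeccte1}) to keep the denominators away from zero. The only organisational difference is that the paper first isolates the event $\{\bar B_{n_2}=0\}$ and bounds its probability by $(1-\varepsilon)^{n_2}+\mathbb{P}[b_{n_1}<\varepsilon]$, whereas you absorb that event into $\{\bar B_{n_2}<c_0/2\}$; your $\varepsilon$-budget assembly is also slightly more explicit about the joint dependence on $(n_1,n_2)$, but the underlying mathematics is identical.
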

\begin{proof}[Proof of Lemma \ref{lemmeccte2}] First, we distinguish the event in which \\ $\frac{1}{n_2}\sum_{i=1}^{n_2} \mathds{1}_{\bm X_i\in \mathcal{L}_{n_1}(\alpha)} =0$, then the one in which $\frac{1}{n_2}\sum_{i=1}^{n_2} \mathds{1}_{\bm X_i\in \mathcal{L}_{n_1}(\alpha)} \neq 0.$ \newline
Let $0< \varepsilon <P[\mathcal{L}_D(\alpha)]. $
Since the $(Y_i,\bm X_i)_{1 \leq i \leq n_2}$ are iid so that the $(\bm X_i)_{1 \leq i \leq n_2}$ are iid, it holds that
\begin{align*}
    \mathbb{P}& \left[\frac{1}{n_2}\sum_{i=1}^{n_2} \mathds{1}_{\bm X_i\in \mathcal{L}_{n_1}(\alpha)} = 0 \right] \\
& =  
\mathbb{E}\left[P_{\tilde{S}_{n_1}}\left[\frac{1}{n_2}\sum_{i=1}^{n_2} \mathds{1}_{\bm X_i\in \mathcal{L}_{n_1}(\alpha)} =0 \right]\right] \\
& = \mathbb{E}\left[ \prod_{i=1}^{n_2}P_{\tilde{S}_{n_1}}\left[ \bm X_i \notin \mathcal{L}_{n_1}(\alpha) \right]\right] \\
& = \mathbb{E}\left[ P_{\tilde{S}_{n_1}}\left[ \bm X \notin \mathcal{L}_{n_1}(\alpha) \right]^{n_2}\right] \\
%& =\mathbb{E}\left[ (1-P_{\tilde{S}_{n_1}}[\mathcal{L}_{n_1}(\alpha)])^{n_2}\right] \\
& = \mathbb{E}\left[(1-P_{\tilde{S}_{n_1}}[\mathcal{L}_{n_1}(\alpha)])^{n_2} \mathds{1}_{P_{\tilde{S}_{n_1}}[\mathcal{L}_{n_1}(\alpha)] \geq \varepsilon}\right] \\
& \hspace{4cm} + \mathbb{E}\left[(1-P_{\tilde{S}_{n_1}}[\mathcal{L}_{n_1}(\alpha)])^{n_2} \mathds{1}_{P_{\tilde{S}_{n_1}}[\mathcal{L}_{n_1}(\alpha)] < \varepsilon}\right]  \\
& \leq (1-\varepsilon)^{n_2} + \mathbb{P}\left[P_{\tilde{S}_{n_1}}[\mathcal{L}_{n_1}(\alpha)] < \varepsilon \right].
\end{align*}
Since $\varepsilon \in \left(0, P[\mathcal{L}_D(\alpha)] \right)$ and $P_{\tilde{S}_{n_1}}[\mathcal{L}_{n_1}(\alpha)] \xrightarrow[n_1 \to \infty]{\mathbb{P}} P[\mathcal{L}_D(\alpha)]$ (see Lemma \ref{lemmeccte1}), we obtain $\mathbb{P}  \left[\frac{1}{n_2}\sum_{i=1}^{n_2} \mathds{1}_{\bm X_i\in \mathcal{L}_{n_1}(\alpha)} = 0 \right] \xrightarrow[n_1,n_2 \to \infty]{}0.$ Now on the event \\ $\left\lbrace \frac{1}{n_2}\sum_{i=1}^{n_2} \mathds{1}_{\bm X_i\in \mathcal{L}_{n_1}(\alpha)} \neq 0 \right\rbrace \cap \left\lbrace P_{\tilde{S}_{n_1}}[\mathcal{L}_{n_1}(\alpha)] >0 \right\rbrace$,
we can write
\begin{align*}\label{ineq1}
    \Bigg| & \frac{\frac{1}{n_2} \sum_{i=1}^{n_2} Y_i \mathds{1}_{\bm X_i\in \mathcal{L}_{n_1}(\alpha)}}{\frac{1}{n_2}\sum_{i=1}^{n_2} \mathds{1}_{\bm X_i\in \mathcal{L}_{n_1}(\alpha)}}- \mathbb{E}_{\tilde{S}_{n_1}}[Y|\bm X \in \mathcal{L}_{n_1}(\alpha)]  \Bigg|  \\
    = & \left| \frac{ \sum_{i=1}^{n_2} Y_i \mathds{1}_{\bm X_i\in \mathcal{L}_{n_1}(\alpha)}}{\sum_{i=1}^{n_2} \mathds{1}_{\bm X_i\in \mathcal{L}_{n_1}(\alpha)}}
    - \frac{\mathbb{E}_{\tilde{S}_{n_1}}[Y \mathds{1}_{\bm X \in \mathcal{L}_{n_1}(\alpha)}]}{P_{\tilde{S}_{n_1}}[\mathcal{L}_{n_1}(\alpha)]}   \right| \\
    \leq & \frac{1}{\frac{1}{n_2}\sum_{i=1}^{n_2} \mathds{1}_{\bm X_i \in \mathcal{L}_{n_1}(\alpha)}}\bigg| \frac{1}{n_2} \sum_{i=1}^{n_2} Y_i \mathds{1}_{\bm X_i\in \mathcal{L}_{n_1}(\alpha)} - \mathbb{E}_{\tilde{S}_{n_1}}[Y \mathds{1}_{\bm X \in \mathcal{L}_{n_1}(\alpha)}]  \bigg| \\
    & +\big| \mathbb{E}_{\tilde{S}_{n_1}}[Y \mathds{1}_{\bm X \in \mathcal{L}_{n_1}(\alpha)}] \big| \bigg| \frac{1}{\frac{1}{n_2}\sum_{i=1}^{n_2}\mathds{1}_{\bm X_i\in \mathcal{L}_{n_1}(\alpha)}}-\frac{1}{P_{\tilde{S}_{n_1}}[\mathcal{L}_{n_1}(\alpha)]} \bigg|\\
    \leq & \frac{1}{\frac{1}{n_2}\sum_{i=1}^{n_2} \mathds{1}_{\bm X_i \in \mathcal{L}_{n_1}(\alpha)}}\bigg| \frac{1}{n_2} \sum_{i=1}^{n_2} Y_i \mathds{1}_{\bm X_i\in \mathcal{L}_{n_1}(\alpha)} -\mathbb{E}_{\tilde{S}_{n_1}}[Y \mathds{1}_{\bm X \in \mathcal{L}_{n_1}(\alpha)}] \bigg| \\
    & + \frac{\mathbb{E}[|Y|]}{P_{\tilde{S}_{n_1}}[\mathcal{L}_{n_1}(\alpha)]\frac{1}{n_2}\sum_{i=1}^{n_2}\mathds{1}_{\bm X_i\in \mathcal{L}_{n_1}(\alpha)}} \bigg|\frac{1}{n_2}\sum_{i=1}^{n_2}\mathds{1}_{\bm X_i\in \mathcal{L}_{n_1}(\alpha)}-P_{\tilde{S}_{n_1}}[\mathcal{L}_{n_1}(\alpha)] \bigg|. \tag{\textbf{R}}
\end{align*}
Let us first clarify the convergence of the denominator terms. Recall that under Assumption \textbf{(H0)} of Theorem \ref{vitesseCCTEproba}, $$ P_{\tilde{S}_{n_1}}[ \mathcal{L}_{n_1}(\alpha)] \xrightarrow[n_1 \to \infty]{\mathbb{P}} P[\mathcal{L}_D(\alpha)]>0,$$
and $\mathbb{P}\left[ \left\lbrace P_{\tilde{S}_{n_1}}[\mathcal{L}_{n_1}(\alpha)]>0 \right\rbrace \right] \xrightarrow[n_1 \to \infty]{} 1$ (see the proof of Lemma \ref{lemmeccte1}).
Next, we prove %for all $\delta>0$ 
\begin{equation}\label{vittermeindi}
    n_2^{\frac{1}{2}} \left( \displaystyle \frac{1}{n_2}\sum_{i=1}^{n_2}\mathds{1}_{\bm X_i\in \mathcal{L}_{n_1}(\alpha)}-P_{\tilde{S}_{n_1}}[\mathcal{L}_{n_1}(\alpha)] \right) =\mathcal{O}_{P,n_1,n_2}(1),
\end{equation}
so that we obtain \begin{equation}
     \displaystyle \frac{1}{n_2}\sum_{i=1}^{n_2}\mathds{1}_{\bm X_i\in \mathcal{L}_{n_1}(\alpha)}-P_{\tilde{S}_{n_1}}[\mathcal{L}_{n_1}(\alpha)] \xrightarrow[\substack{n_1, n_2 \to \infty}]{\mathbb{P}} 0,
\end{equation} 
and $$  \frac{1}{n_2}\sum_{i=1}^{n_2}\mathds{1}_{\bm X_i\in \mathcal{L}_{n_1}(\alpha)}  \xrightarrow[\substack{n_1, n_2 \to \infty}]{\mathbb{P}} P[\mathcal{L}_D(\alpha)]>0.$$
Let us prove (\ref{vittermeindi}). Let $n_1 \geq 1$ and $\varepsilon>0$. Using Tchebychev inequality, we can write $\mathbb{P}$-a.s (here the event $\omega \in \Omega$ is one realisation of the sample $\tilde{S}_{n_1}$ and is independent of $\varepsilon$) \begin{align*}
    P_{\tilde{S}_{n_1}}\bigg[ \Big| \displaystyle \frac{1}{n_2}\sum_{i=1}^{n_2}\mathds{1}_{\bm X_i\in \mathcal{L}_{n_1}(\alpha)}-P_{\tilde{S}_{n_1}}[\mathcal{L}_{n_1}(\alpha)] \Big| \geq \varepsilon \bigg] & \leq \displaystyle \frac{\mathbb{V}_{\tilde{S}_{n_1}}\big(\frac{1}{n_2}\sum_{i=1}^{n_2}\mathds{1}_{\bm X_i\in \mathcal{L}_{n_1}(\alpha)}\big)}{\varepsilon^2} \\
    & = \displaystyle \frac{\mathbb{V}_{\tilde{S}_{n_1}} \big(\mathds{1}_{\bm X_1\in \mathcal{L}_{n_1}(\alpha)}\big)}{n_2 \varepsilon^2} \\
    & \leq \frac{1}{n_2\varepsilon^2}.
\end{align*}
Thus, taking $M_\varepsilon:=1/\varepsilon^{\frac{1}{2}}$ it holds that
\begin{align*}
    \sup_{n_1,n_2 \geq 1}\mathbb{P} \bigg[ n_2^{\frac{1}{2}} \Big| \displaystyle \frac{1}{n_2}\sum_{i=1}^{n_2}\mathds{1}_{\bm X_i\in \mathcal{L}_{n_1}(\alpha)}-P_{\tilde{S}_{n_1}}[\mathcal{L}_{n_1}(\alpha)] \Big| \geq M_\varepsilon \bigg] & \leq \frac{1}{n_2\left(\frac{M_\varepsilon}{n_2^{\frac{1}{2}}}\right)^2} \\
    &=\varepsilon,
\end{align*}
which means that (\ref{vittermeindi}) is satisfied.
Similarly, we obtain
$$ \frac{1}{n_2} \sum_{i=1}^{n_2} Y_i \mathds{1}_{\bm X_i\in \mathcal{L}_{n_1}(\alpha)} -\mathbb{E}_{\tilde{S}_{n_1}}[Y \mathds{1}_{\bm X \in \mathcal{L}_{n_1}(\alpha)}] = \mathcal{O}_{P,n_1,n_2}\left(n_2^{\frac{1}{2}}\right)$$
%\begin{align*}
%    \mathbb{P} \bigg[  n_2^{\frac{1}{2}-\delta}\bigg| \frac{1}{n_2} \sum_{i=1}^{n_2} Y_i \mathds{1}_{\bm X_i\in \mathcal{L}_{n_1}(\alpha)} -\mathbb{E}[Y \mathds{1}_{\bm X \in \mathcal{L}_{n_1}(\alpha)}] \bigg| \geq \varepsilon \bigg] & \leq \displaystyle \mathbb{E}[Y^2] \frac{n_2^{-2\delta}}{\varepsilon^2} \\
 %   & \xrightarrow[n_1,n_2 \to \infty]{}0.
%\end{align*}
Hence the result.
\end{proof}
\textbf{Proofs of Section \ref{Hausdorff section}.}
\begin{remark}\label{remarquedPropCasalRod}
Among the four properties of a depth function, the only necessary property in Proposition \ref{PropCasalRodriguez} is \textbf{(D4)} (vanishing at infinity). The latter guarantees that the set $\mathcal{K}_\varepsilon(\alpha)$ is compact in $\mathbb{R}^d.$ Indeed, as $D$ satisfies \textbf{(D4)}, the assumption $0<\varepsilon<\alpha$ implies $\mathcal{K}_\varepsilon(\alpha)$ is bounded, moreover under (i), $D$ is continuous on $\mathcal{K}_\varepsilon(\alpha)$ so that $\mathcal{K}_\varepsilon(\alpha)$ is a closed set. By denoting $(HD)_x$ the Hessian matrix of $D$ at x, one can note that $M_H:=\sup_{x \in \mathcal{K}_\varepsilon(\alpha)}\opnorm{(HD)_x}<\infty$, as a supremum of a continuous mapping on a compact set.
\end{remark}
\begin{proof}[Proof of Proposition \ref{PropCasalRodriguez}]
For the sake of simplicity, denote by $[\alpha \pm \varepsilon]$ the interval $[\alpha-\varepsilon, \alpha+ \varepsilon]$. Under the assumptions of Proposition \ref{PropCasalRodriguez}, $K_\varepsilon(\alpha):=D^{-1}([\alpha \pm \varepsilon])$ is compact and $$M_H:=\sup_{x \in K_\varepsilon(\alpha)}\opnorm{HD_x} < \infty$$ (cf. Remark \ref{remarquedPropCasalRod}).
We state the following useful lemma.
\begin{lemma}\label{lemmeboule}
Under the assumptions of Proposition \ref{PropCasalRodriguez}, there exist $N:=N_\varepsilon \geq 1$, some points $x_i:=x_{i,\varepsilon} \in \mathcal{K}_{\frac{\varepsilon}{2}}(\alpha)$, and some positive real numbers $r_i:=r_{x_i} \in \mathbb{R}_+^*$, $1 \leq i \leq N$, s.t. \begin{center}
     $\displaystyle \mathcal{K}_{\frac{\varepsilon}{2}}(\alpha) \subset \bigcup_{i=1}^N \Boule \bigg(x_i,\frac{r_i}{2} \bigg) \subset \bigcup_{i=1}^N \Boule(x_i,r_i) \subset  \mathcal{K}_{\varepsilon}(\alpha).$
 \end{center}
%In particular, for all $0<\gamma\leq \varepsilon/2,$
% \begin{center}
 %    $\displaystyle \mathcal{K}_{\gamma}(\alpha) \subset \bigcup_{i=1}^N \Boule \bigg(x_i,\frac{r_i}{2} \bigg) \subset \bigcup_{i=1}^N \Boule(x_i,r_i) \subset  \mathcal{K}_{\varepsilon}(\alpha).$
% \end{center}
\end{lemma}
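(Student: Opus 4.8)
The plan is a routine compactness-and-covering argument; the only thing requiring care is the topology of the annular sets $\mathcal{K}_{\cdot}(\alpha)$. First I would record that $\mathcal{K}_{\varepsilon/2}(\alpha)=D^{-1}([\alpha\pm\tfrac{\varepsilon}{2}])$ is compact, by exactly the reasoning of Remark \ref{remarquedPropCasalRod}: it is bounded since $\tfrac{\varepsilon}{2}<\varepsilon<\alpha$ forces $\mathcal{K}_{\varepsilon/2}(\alpha)\subseteq\mathcal{K}_\varepsilon(\alpha)$ and the latter is bounded by \textbf{(D4)}, and it is closed since, under assumption (i), $D$ is continuous on $\mathcal{K}_\varepsilon(\alpha)$. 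The geometric heart of the lemma is the observation that $\mathcal{K}_{\varepsilon/2}(\alpha)$ lies in the interior of $\mathcal{K}_\varepsilon(\alpha)$: for any $x\in\mathcal{K}_{\varepsilon/2}(\alpha)$ one has $D(x)\in[\alpha-\tfrac{\varepsilon}{2},\alpha+\tfrac{\varepsilon}{2}]\subset(\alpha-\varepsilon,\alpha+\varepsilon)$, so continuity of $D$ near $x$ yields some $r_x>0$ with $D(y)\in(\alpha-\varepsilon,\alpha+\varepsilon)$ for every $y\in B(x,r_x)$, that is, $B(x,r_x)\subseteq\mathcal{K}_\varepsilon(\alpha)$.

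Next I would run the finite-subcover step. For each $x\in\mathcal{K}_{\varepsilon/2}(\alpha)$ fix such an $r_x>0$; then $\{B(x,r_x/2):x\in\mathcal{K}_{\varepsilon/2}(\alpha)\}$ is an open cover of the compact set $\mathcal{K}_{\varepsilon/2}(\alpha)$, since each $x$ lies in its own ball. By the Borel–Lebesgue property there are finitely many centres $x_1,\dots,x_N\in\mathcal{K}_{\varepsilon/2}(\alpha)$ and radii $r_i:=r_{x_i}>0$ with $\mathcal{K}_{\varepsilon/2}(\alpha)\subseteq\bigcup_{i=1}^N B(x_i,r_i/2)$. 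The middle inclusion $\bigcup_{i=1}^N B(x_i,r_i/2)\subseteq\bigcup_{i=1}^N B(x_i,r_i)$ is trivial, and since $B(x_i,r_i)\subseteq\mathcal{K}_\varepsilon(\alpha)$ for every $i$ by construction, the last inclusion $\bigcup_{i=1}^N B(x_i,r_i)\subseteq\mathcal{K}_\varepsilon(\alpha)$ follows. This is precisely the asserted chain.

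The one point deserving attention is the interior claim: I am implicitly using that assumption (i) makes $D$ continuous on a neighbourhood of $\mathcal{K}_\varepsilon(\alpha)$, which is the natural reading of ``$\mathscr{C}^2$ on $\mathcal{K}_\varepsilon(\alpha)$'' and is in any case needed for the gradient and Hessian appearing in Proposition \ref{PropCasalRodriguez} to be meaningful, so that $D^{-1}((\alpha-\varepsilon,\alpha+\varepsilon))$ is genuinely open in $\mathbb{R}^d$ and contains $\mathcal{K}_{\varepsilon/2}(\alpha)$. In the Mahalanobis instance of Proposition \ref{propositiondeMHDD_h} there is nothing to check, since $MHD(\cdot,P)$ is $\mathscr{C}^\infty$ on all of $\mathbb{R}^d$. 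Beyond this, the argument uses only compactness of $\mathcal{K}_{\varepsilon/2}(\alpha)$ and involves no quantitative estimates.
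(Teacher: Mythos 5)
Your proof is correct and follows essentially the same route as the paper's: observe that $\mathcal{K}_{\varepsilon/2}(\alpha)$ sits in the interior of $\mathcal{K}_\varepsilon(\alpha)$, cover it by balls $\B(x,r_x/2)$ with $\B(x,r_x)\subset\mathcal{K}_\varepsilon(\alpha)$, and extract a finite subcover by compactness. Your remark that the interior claim tacitly requires continuity of $D$ on a neighbourhood of $\mathcal{K}_\varepsilon(\alpha)$ is a fair point of care that the paper glosses over, but it does not change the argument.
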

\begin{proof}[Proof of Lemma \ref{lemmeboule}]
Since $\varepsilon >0$, then $K_{\frac{\varepsilon}{2}}(\alpha)$ is a subset of the interior of $\mathcal{K}_{\varepsilon}(\alpha)$. Thus, for any $x \in K_{\frac{\varepsilon}{2}}(\alpha)$, there exists $r_x:=r_x(\varepsilon)>0$ s.t. $$\Boule(x,r_x) \subset \mathcal{K}_\varepsilon(\alpha),$$ that is,
$$\displaystyle K_{\frac{\varepsilon}{2}}(\alpha) \subset \bigcup_{x \in K_{\frac{\varepsilon}{2}}(\alpha)} \Boule \bigg(x,\frac{r_x}{2} \bigg) \subset \bigcup_{x \in K_{\frac{\varepsilon}{2}}(\alpha)} \Boule(x,r_x) \subset \mathcal{K}_\varepsilon(\alpha).$$
Since we have an open cover of the compact set $\displaystyle K_{\frac{\varepsilon}{2}}(\alpha)$, then the latter has a finite cover. In other words, there exist $N \geq 1$, some points $x_i:=x_{i,\varepsilon} \in \mathcal{K}_{\frac{\varepsilon}{2}}(\alpha)$, and $r_i:=r_{x_i} \in \mathbb{R}_+^*$, $1 \leq i \leq N$, s.t.\begin{center}
     $\displaystyle \mathcal{K}_{\frac{\varepsilon}{2}}(\alpha) \subset \bigcup_{i=1}^N \Boule \bigg(x_i,\frac{r_i}{2} \bigg) \subset \bigcup_{i=1}^N \Boule(x_i,r_i)  \subset \mathcal{K}_\varepsilon(\alpha).$
 \end{center}
 Hence the result.
\end{proof}
Let $0<\gamma \leq \varepsilon/2$ and $x \in \mathcal{K}_\gamma(\alpha)$. For $\lambda \in \mathbb{R}$, define $$y_\lambda:=y_{\lambda,x}=x+\lambda\frac{(\nabla D)_x}{\|(\nabla D)_x\|},$$
with $\|(\nabla D)_x\| \geq m_\nabla >0,$ since $\mathcal{K}_\gamma(\alpha) \subset \mathcal{K}_\varepsilon(\alpha)$. In what follows, we take $$\| y_\lambda-x \|=|\lambda| < \displaystyle \min_{1 \leq i \leq N} \frac{r_i}{2}.$$ It holds $[y_\lambda,x] \subset \mathcal{K}_\varepsilon(\alpha)$. Indeed, $x \in \mathcal{K}_\gamma(\alpha)$ so that Lemma \ref{lemmeboule} applies, namely, there exists $ 1 \leq i_0 \leq N$ s.t. $x \in \Boule(x_{i_0},r_{i_0}/2),$ and for all $z \in [y_\lambda,x],$ \begin{align*}
    \|z-x_{i_0}\| &\leq \|z-x\|+\|x-x_{i_0}\| \\
    & \leq \|y_\lambda-x\|+\|x-x_{i_0}\| \\
    &=|\lambda|+\|x-x_{i_0}\| \\
    & < \min_{1 \leq i \leq N}\frac{r_i}{2}+ \frac{r_{i_0}}{2}\\
    & \leq r_{i_0}.
\end{align*}
Thus, $z \in \Boule(x_{i_0},r_{i_0}) \subset \mathcal{K}_\varepsilon(\alpha)$ (cf. Lemma \ref{lemmeboule}).
%The function $D(\cdot)$ is of class $\mathscr{C}^2$ on $\mathcal{K}_\varepsilon(\alpha)$, with the fact that
Since $\displaystyle |\lambda| < \min_{1 \leq i \leq N} r_i/2$, using a Taylor expansion on the line $[x,y_\lambda] \subset \mathcal{K}_\varepsilon(\alpha)$, it holds \begin{center}
     $\displaystyle D(y_\lambda)=D(x)+\langle (\nabla D)_x, y_\lambda-x\rangle + \frac{1}{2} \left\langle y_\lambda - x, (HD)_{\overline{x}}(y_\lambda-x) \right\rangle,$ $\overline{x} \in [x,y_\lambda],$
 \end{center} 
then, %using Cauchy-Schwarz inequality,
 \begin{align*}
      \displaystyle D(y_\lambda)=& D(x)+ \lambda \|(\nabla D)_x\| + \frac{\lambda^2}{2\|(\nabla D)_x\|^2}\left\langle (\nabla D)_x, (HD)_{\overline{x}}(\nabla D)_x \right\rangle.
 \end{align*}
Using Cauchy-Schwarz inequality, it holds
\begin{align*}
 |D(y_\lambda) -D(x) - \lambda \|(\nabla D)_x\| |\leq~&  \frac{\lambda^2}{2\|(\nabla D)_x\|^2}\| (\nabla D)_x \| \opnorm{(HD)_{\overline{x}}} \cdot \|(\nabla D)_x \| \\
     = & \frac{\lambda^2}{2}\opnorm{(HD)_{\overline{x}}}.
\end{align*}
 Since $\overline{x} \in \mathcal{K}_\varepsilon(\alpha)$, then $ \opnorm{(HD)_{\overline{x}}} \leq \sup_{x \in \mathcal{K}_\varepsilon(\alpha)} \opnorm{(HD)_{\overline{x}}} =M_H < \infty.$
For any $\displaystyle |\lambda| < \min_{1 \leq i \leq N} r_i/2$, we obtain
\begin{equation}\label{E.0}
    D(x) + \lambda \|(\nabla D)_x\| - \frac{\lambda^2}{2}M_H \leq D(y_\lambda) \leq D(x) + \lambda \|(\nabla D)_x\| + \frac{\lambda^2}{2}M_H.
\end{equation}
If $\displaystyle 0<\lambda<\min_{1 \leq i \leq N} r_i/2$, then with the above inequality, we have \begin{equation}\label{E.1}
  \displaystyle D(y_\lambda) \geq  D(x) + \lambda \inf_{x \in \mathcal{K}_\varepsilon(\alpha)} \|(\nabla D)_x\| - \frac{\lambda^2}{2}M_H =D(x)+\lambda \left(m_\nabla-\lambda\frac{M_H}{2}\right)
\end{equation}
Suppose now $M_H>0$ (the case $M_H=0$ is trivial). That way, if $ 0<\lambda<\frac{m_\nabla}{M_H} \wedge \min_{1 \leq i \leq N} r_i/2$, %(i.e. $\displaystyle 0<\lambda<\min_{1 \leq i \leq N} r_i/2$, and $\lambda<\frac{m_\nabla}{M_H}$ so that $m_\nabla-\lambda\frac{M_H}{2} \geq \frac{m_\nabla}{2}$)
using (\ref{E.1}), \begin{center}
   $ D(y_\lambda) \geq D(x)+\lambda \frac{m_\nabla}{2}$.
\end{center}
Similarly, using the right hand side of inequality (\ref{E.0}), for any $\displaystyle 0<\lambda < \frac{m_\nabla}{M_H} \wedge \min_{1 \leq i \leq N} \frac{r_i}{2} $, $$D(y_{-\lambda})\leq D(x)-\lambda\frac{m_\nabla}{2}.$$
To sum up, for any $0<\gamma \leq \varepsilon/2$, $x \in \mathcal{K}_\gamma(\alpha)$ and  $\displaystyle 0<\lambda<\frac{m_\nabla}{M_H} \wedge \min_{1 \leq i \leq N} \frac{r_i}{2}$, it holds \begin{align}
    &D(y_\lambda) \geq D(x) +\lambda\frac{m_\nabla}{2}, \label{1*} \\
    &D(y_{-\lambda}) \leq D(x) -\lambda\frac{m_\nabla}{2}. \label{2*}
\end{align}
Choose $\displaystyle \gamma:=\displaystyle \left[\frac{m_\nabla}{4}\left( \frac{m_\nabla}{M_H} \wedge \min_{1\leq i \leq N} \frac{r_i}{2}\right) \right]\wedge \frac{\varepsilon}{2}>0.$ Now we show :\begin{center}
if $|\alpha-\beta|  \leq \gamma $, then $d_H(\left\lbrace D=\alpha \right\rbrace,\left\lbrace D=\beta \right\rbrace) \leq \frac{2}{m_\nabla}|\alpha-\beta|$.
\end{center} 
Assume $|\alpha-\beta|  \leq \gamma $.\\
Let $\beta$ be s.t. $0<\beta-\alpha \leq \gamma$. In this case, $\beta=\alpha+\eta$ with $0<\eta \leq \gamma.$
First, we have to find an upper bound for $\sup_{x \in \left\lbrace D=\beta \right\rbrace}\dist(x,\left\lbrace D=\alpha \right\rbrace)$. Let $x\in \left\lbrace D=\beta \right\rbrace$, i.e. $D(x)=\beta=\alpha+\eta$. Since $0<\eta \leq \gamma$, $0<D(x)-\alpha \leq \gamma,$ i.e. $x\in \mathcal{K}_\gamma(\alpha)$.
Choose $\lambda:=\frac{2\eta}{m_\nabla} \in
\displaystyle \left (0, \frac{m_\nabla}{M_H} \wedge \min_{1 \leq i \leq N} \frac{r_i}{2}\right)$ %since $\displaystyle \eta \leq  \gamma \leq \frac{m_\nabla}{4}\left(\frac{m_\nabla}{M_H} \wedge \min_{1 \leq i \leq N} \frac{r_i}{2} \right)<m_\nabla \left(\frac{m_\nabla}{M_H} \wedge \min_{1 \leq i \leq N} \frac{r_i}{2} \right)$, 
so that with (\ref{2*}), $$D(y_{-\lambda}) \leq D(x)-\lambda\frac{m_\nabla}{2}=D(x)-\eta=\alpha<D(x).$$ 
%car \alpha+\varepsilon=D(x)
From the above inequality and the continuity property of $z \mapsto D(z)$ on $\mathcal{K}_\varepsilon(\alpha) \supset [y_{-\lambda},x]$, %(since $x \in \mathcal{K}_\gamma(\alpha)$ and $0<\gamma \leq \varepsilon/2$), 
there exists a point $y \in [y_{-\lambda},x]$ s.t. $D(y)=\alpha$. Moreover, $$\| x-y\|\leq \| x-y_{-\lambda}\|=|\lambda|=\frac{2\eta}{m_\nabla}=\frac{2}{m_\nabla}(\beta-\alpha).$$
As a consequence, for all $x \in \left\lbrace D=\beta
\right\rbrace$,
$$\dist(x,\left\lbrace D=\alpha \right\rbrace \leq \|x-y\| \leq \frac{2}{m_\nabla}|\beta-\alpha|.$$
So $$\sup_{x \in \left\lbrace D=\beta \right\rbrace}\dist(x,\left\lbrace D=\alpha \right\rbrace)\leq \frac{2}{m_\nabla}|\beta-\alpha|.$$
In order to get an upper bound for $\sup_{x \in \left\lbrace D=\alpha \right\rbrace}\dist(x,\left\lbrace D=\beta \right\rbrace)$, we use the inequality (\ref{1*}) by proceeding in a similar way.\vspace{0.2cm}\newline
The proof in the case  $0>\beta-\alpha>-\gamma$ is completely analogous.
\end{proof}
\begin{proof}[Proof of Theorem \ref{theoVitessedH}] Let $\alpha \in (0, \alpha_{\max}(P))$.\vspace{0.3cm}\newline
\underline{\textbf{Step 1 : }} we need to find an upper bound for $\sup_{x \in \partial \mathcal{L}(\alpha)}d(x,\partial \mathcal{L}_n(\alpha)).$ \vspace{0.3cm}\newline
Let $x \in \partial \mathcal{L}(\alpha)$. Denote $\varepsilon_n=2 \|D_n-D\|_\infty$. Under the assumptions of Theorem \ref{theoVitessedH} $\varepsilon_n \to 0~\mathbb{P}$-a.s, so that $\mathbb{P}$-a.s, there exists an integer $n_0:=n_0(\omega) \geq 1$ (independent from $x$), s.t. for all $n \geq n_0$, $\varepsilon_n \leq \gamma.$ Taking $\beta= \alpha+\varepsilon_n$, it holds 
\begin{center}
    $\mathbb{P}$-a.s, for all $n\geq n_0$, $d_H(\partial \mathcal{L}(\alpha+\varepsilon_n),\partial \mathcal{L}(\alpha)) \leq A\varepsilon_n.$
\end{center}
Thus, from the above inequality and using the continuity property of $D$, $\mathbb{P}$-a.s, for all $n\geq n_0$, there exists $u_n:=u_{x,\varepsilon_n} \in \partial \mathcal{L}(\alpha+\varepsilon_n)$ i.e. $D(u_n)=\alpha+\varepsilon_n$, and $l_n:=l_{x,\varepsilon_n} \in \partial \mathcal{L}(\alpha-\varepsilon_n)$ i.e. $D(l_n)=\alpha-\varepsilon_n,$ s.t. \begin{equation*}
    \|u_n-x \| \leq A \varepsilon_n \text{~and~} \|l_n-x \| \leq A \varepsilon_n.
\end{equation*}
Let us assume $\|D_n- D\|_{\infty}>0$ (the case $\|D_n- D\|_{\infty}=0$ is trivial). In this case, $$D_n(u_n)=D_n(u_n)+\alpha+\varepsilon_n-D(u_n) \geq \alpha+\varepsilon_n-\|D_n- D\|_{\infty}=\alpha+\|D_n- D\|_{\infty}>\alpha.$$
Similarly, we have $D_n(l_n)<\alpha.$ So $\mathbb{P}$-a.s, for all $n\geq n_0$, $$D_n(l_n)<\alpha<D_n(u_n).$$ For the sake of simplicity, we denote here $\mathcal{L}_n:=\left\lbrace x : D_n(x) \leq \alpha \right\rbrace$. Then, almost surely, for all $n\geq n_0$, $\mathcal{L}_n$ is non-empty (since it contains $l_n$). And by definition, $l_n \in \mathcal{L}_n \subset \overline{\mathcal{L}_n}$. Denoting by $\mathcal{L}_n^c$ the complementary of $\mathcal{L}_n$ in $\mathbb{R}^d$, it holds $u_n \in \mathcal{L}_n^c \subset \overline{\mathcal{L}_n^c}= (\mathring{\mathcal{L}_n})^c,$ that is, $u_n \notin \mathring{\mathcal{L}_n}$. Then, %(using Lemma 3 in \citet{Wills07}) 
\begin{center}
    $\mathbb{P}$-a.s, for all $n\geq n_0$, there exists $z_n \in [l_n,u_n] \cap \partial \mathcal{L}_n$.
\end{center} 
%\begin{figure}[H]
 %   \centering
  %  \input{bordasymptotique}
  %  \caption{Existence of the point $z_n$.}
%\end{figure}
Thus, $\mathbb{P}$-a.s, for all $n\geq n_0$, \begin{align*}
    \dist(x,\partial \mathcal{L}_n(\alpha)) & \leq \|x-z_n\|  \\
    & \leq \|x-u_n\|+ \|u_n-z_n\| \\
    & \leq \|u_n-x\|+\|u_n-l_n\| \\
    & \leq \|u_n-x\|+ \|u_n-x\|+\|x-l_n\| \\
    &\leq 3A\varepsilon_n\\
    &=6A\|D_n-D\|_\infty.
\end{align*}
Since the previous inequality holds for all $x \in \partial \mathcal{L}(\alpha)$, we have, $\mathbb{P}$-a.s, for all $n\geq n_0$, \begin{center}
    $\displaystyle \sup_{x \in \partial \mathcal{L}(\alpha)}d(x,\partial \mathcal{L}_n(\alpha)) \leq 6A\|D_n-D\|_\infty.$
\end{center}
\underline{\textbf{Step 2 : }} Let us find an upper bound for $\sup_{x \in \partial \mathcal{L}_n(\alpha)}d(x,\partial \mathcal{L}(\alpha)).$ \vspace{0.2cm}\newline
Let $x_n \in \partial \mathcal{L}_n(\alpha):=\partial \mathcal{L}_n= \overline{\left\lbrace D_n \leq \alpha \right\rbrace}\cap \overline{ \left\lbrace D_n > \alpha \right\rbrace} \subset \overline{ \left\lbrace D_n \geq \alpha \right\rbrace}=\left\lbrace D_n \geq \alpha \right\rbrace$, since $D_n$ is a.s upper-semicontinuous, so that the upper level set based on $D_n$ is closed. Then, $D_n(x_n) \geq \alpha$. Furthermore, since $x_n \in \overline{\left\lbrace D_n \leq \alpha \right\rbrace}$, there exists $\ell_n$ "close" enough to $x_n$ s.t. $D_n(\ell_n) \leq \alpha$, and s.t. by continuity of $D$, $|D(x_n)-D(\ell_n)| \leq \varepsilon_n/2.$ On the one hand, 
$$ D(x_n)=D_n(x_n)-D_n(x_n)+D(x_n) \geq \alpha -\varepsilon_n/2 \geq \alpha -\varepsilon_n ,$$
on the other hand,
\begin{align*}
    D(x_n) & =D_n(\ell_n)-D_n(\ell_n)+D(\ell_n)-D(\ell_n)+D(x_n) \\
     & \leq \alpha + \varepsilon_n/2+ \varepsilon_n/2,
\end{align*} so, $$|D(x_n)-\alpha| \leq \varepsilon_n. $$
Recall that, a.s for all $n \geq n_0$, $\varepsilon_n \leq \gamma.$ Then, using property \textbf{(L)} with $\beta=D(x_n)$, %($x_n \in \left\lbrace D=D(x_n) \right\rbrace $ trivially),
we can write
 \begin{align*}
   \dist(x_n, \partial \mathcal{L}(\alpha)) \leq d_H(\partial \mathcal{L}(D(x_n)),\partial \mathcal{L}(\alpha)) & \leq A|D(x_n)-\alpha| \\
    & \leq 2 A \|D_n-D\|_\infty.
\end{align*}
Now we deduce that, a.s. for $n$ large enough, $$\sup_{x \in \partial \mathcal{L}_n(\alpha)}d(x,\partial \mathcal{L}(\alpha)) \leq 2A\|D_n-D\|_\infty.$$
Hence the result.
\end{proof}
\textbf{Proofs of Section \ref{MHD section}.}
\begin{proof}[Proof of Proposition \ref{propositiondeMHDD_h}]
\textbf{\textit{(i)}} The function $MHD(\cdot)$ is infinitely differentiable on $\mathbb{R}^d$, and denoting $\mu=\mu_{\bm X}$, we can write for any $1 \leq k \leq d$,\begin{align*}
    \displaystyle \frac{\partial MHD(x)}{\partial x_k}&=-MHD(x)^2  \frac{\partial}{\partial x_k}\Bigg[ \displaystyle \sum_{i,j=1}^d (x_i-\mu_i)(\Sigma_{\bm X}^{-1})_{ij}(x_j-\mu_j) \Bigg]\\
    %&=-MHD(x)^2 \Bigg[ \displaystyle \sum_{j=1}^d (\Sigma_{\bm X}^{-1})_{kj}(x_j-\mu_j) + \sum_{i=1}^d (\Sigma_{\bm X}^{-1})_{ik}(x_i-\mu_i) \Bigg]\\
    &=-MHD(x)^2 \cdot 2 \Bigg[ \displaystyle \sum_{i=1}^d (\Sigma_{\bm X}^{-1})_{ki}(x_i-\mu_i) \Bigg], \text{~}(\Sigma_{\bm X}^{-1}\text{~is symmetric})\\
    &=-2MHD(x)^2\bigg[\Sigma_{\bm X}^{-1}(x-\mu) \bigg]_k.
\end{align*}
So $$(\nabla MHD)_x=-2MHD(x)^2\Sigma_{\bm X}^{-1}(x-\mu).$$
Since $MHD(x)>0$, $$ (\nabla MHD)_x=0 \Longleftrightarrow x=\mu=\mu_{\bm X}.$$
Thus, $$\displaystyle \|(\nabla MHD)_x\|>0,~ \text{for all~} x \neq \mu_{\bm X}.$$
Now since $x \in \mathbb{R}^d \mapsto \| (\nabla MHD)_x \|$ is continuous and $\mathcal{K}_\varepsilon(\alpha)$ is compact, then there exists $x_0 \in \mathcal{K}_\varepsilon(\alpha)$ in which the infimum $m_\nabla$ is attained, $$m_\nabla=\| (\nabla MHD)_{x_0}\|>0.$$ The latter inequality is strict since $x_0 \in \mathcal{K}_\varepsilon(\alpha)$, and $\mu_{\bm X} \notin \mathcal{K}_\varepsilon(\alpha)$ (from the assumption $\varepsilon < 1-\alpha).$ Indeed,
$$\mu_{\bm X} \in \mathcal{K}_\varepsilon(\alpha)  \Leftrightarrow |MHD(\mu_{\bm X})-\alpha| \leq \varepsilon \Leftrightarrow |1-\alpha|=1-\alpha \leq \varepsilon. $$
\textit{\textbf{(ii)}} It suffices to prove \begin{equation} \label{MhdconsisDycker}
    \|MHD_n- MHD\|_{\infty,\mathbb{R}^d} \xrightarrow[n \to \infty]{a.s} 0,
\end{equation} by recalling that $\alpha_{\max}(P)=1$ for $MHD$ depth. The result is hence a straight forward application of Proposition \ref{PropCasalRodriguez} and Theorem \ref{theoVitessedH}. In order to prove (\ref{MhdconsisDycker}), one can refer to the computations in the proof of Theorem \ref{dnRate_MHD} and obtain the desired result knowing that $\hat{\Sigma}_n \xrightarrow[n \to \infty]{a.s} \Sigma,$ and $\hat{\mu}_n \xrightarrow[n \to \infty]{a.s} \mu.$
\end{proof}
\begin{proof}[Proof of Theorem \ref{dnRate_MHD}]
Let $x \in \mathbb{R}^d.$  Denote $\mu:=\mu_{\bm X}$ and $\Sigma:=\Sigma_{\bm X}$, we can write \begin{align*}
    |MHD_n(x)-& MHD(x)|  \\
    &= \displaystyle \left| \frac{1}{1+~^t(x-\hat{\mu}_n)\hat{\Sigma}_n^{-1}(x-\hat{\mu}_n)}-\frac{1}{1+~^t(x-\mu)\Sigma^{-1}(x-\mu)}\right| \\
    & \leq \displaystyle \frac{\left|~^t(x-\hat{\mu}_n)\hat{\Sigma}_n^{-1}(x-\hat{\mu}_n)-~^t(x-\mu)\Sigma^{-1}(x-\mu) \right|}{1+~^t(x-\mu)\Sigma^{-1}(x-\mu)}.
\end{align*}
Since $\Sigma$ is a positive definite symmetric and invertible matrix, we can make the change of variable $y=\Sigma^{-\frac{1}{2}}(x-\mu)$. So that, \begin{align*}
   \displaystyle \| MHD_n & - MHD \|_{\infty, \mathbb{R}^d} 
  % & \leq \displaystyle \sup_{y \in \mathbb{R}^d} \frac{\left|~^t(\Sigma^{\frac{1}{2}}y+\mu-\hat{\mu}_n)\hat{\Sigma}_n^{-1}(\Sigma^{\frac{1}{2}}y+\mu-\hat{\mu}_n)-~^tyy\right|}{1+~^tyy} \\
 \leq \displaystyle \sup_{y \in \mathbb{R}^d} \frac{\left| \| \hat{\Sigma}_n^{-\frac{1}{2}}(\Sigma^{\frac{1}{2}}y+\mu-\hat{\mu}_n)\|^2 -~\|y\|^2\right|}{1+\|y\|^2}.
\end{align*}
Now, denoting by $I_d$ the identity matrix of size $d$, and using a triangle inequality then Cauchy-Schwarz inequality, it holds
\begin{align*}
     &\frac{\left| \| \hat{\Sigma}_n^{-\frac{1}{2}}(\Sigma^{\frac{1}{2}}y+\mu-\hat{\mu}_n)\|^2 -~\|y\|^2\right|}{1+\|y\|^2}\\
     %&=\frac{\left| \| \hat{\Sigma}_n^{-\frac{1}{2}}\Sigma^{\frac{1}{2}}y\|^2+\|\hat{\Sigma}_n^{-\frac{1}{2}}(\mu-\hat{\mu}_n)\|^2+2\left\langle \hat{\Sigma}_n^{-\frac{1}{2}}\Sigma^{\frac{1}{2}}y,\hat{\Sigma}_n^{-\frac{1}{2}}(\mu-\hat{\mu}_n) \right\rangle-~\|y\|^2\right|}{1+\|y\|^2} \\
     &\leq  \frac{\left| \| \hat{\Sigma}_n^{-\frac{1}{2}}\Sigma^{\frac{1}{2}}y\|^2-~\|y\|^2 \right| +\|\hat{\Sigma}_n^{-\frac{1}{2}}(\mu-\hat{\mu}_n)\|^2+2 \left| \left\langle \hat{\Sigma}_n^{-\frac{1}{2}}\Sigma^{\frac{1}{2}}y,\hat{\Sigma}_n^{-\frac{1}{2}}(\mu-\hat{\mu}_n) \right\rangle \right|}{1+\|y\|^2} \\
     & \leq  \frac{\opnorm{\Sigma^{\frac{1}{2}} \hat{\Sigma}_n^{-1} \Sigma^{\frac{1}{2}}- I_d}\|y\|^2 +\|\hat{\Sigma}_n^{-\frac{1}{2}}(\mu-\hat{\mu}_n)\|^2+2 \opnorm{\hat{\Sigma}_n^{-1}} \opnorm{\Sigma^{\frac{1}{2}}}\|y\|\|\mu-\hat{\mu}_n\|}{1+\|y\|^2},
\end{align*}
%since,\\
%$\| \hat{\Sigma}_n^{-\frac{1}{2}}\Sigma^{\frac{1}{2}}y\|^2-~\|y\|^2=\left\langle \hat{\Sigma}_n^{-\frac{1}{2}}\Sigma^{\frac{1}{2}}y,\hat{\Sigma}_n^{-\frac{1}{2}}\Sigma^{\frac{1}{2}}y \right\rangle - \left\langle y,y \right\rangle=\left\langle  (\Sigma^{\frac{1}{2}} \hat{\Sigma}_n^{-1} \Sigma^{\frac{1}{2}}- I_d)y,y \right\rangle$, and
%$\left\langle \hat{\Sigma}_n^{-\frac{1}{2}}\Sigma^{\frac{1}{2}}y,\hat{\Sigma}_n^{-\frac{1}{2}}(\mu-\hat{\mu}_n) \right\rangle=\left\langle \hat{\Sigma}_n^{-1}\Sigma^{\frac{1}{2}}y,\mu-\hat{\mu}_n \right\rangle$, then using Cauchy-Schwarz inequality.
This, together with the fact that $2\|y\|/(1+\|y\|^2) \leq 1$,  for all $y \in \mathbb{R}^d$,
\begin{align*}
   &  \frac{\left| \| \hat{\Sigma}_n^{-\frac{1}{2}}(\Sigma^{\frac{1}{2}}y+\mu-\hat{\mu}_n)\|^2 -~\|y\|^2\right|}{1+\|y\|^2} \\
   & \leq \opnorm{\Sigma^{\frac{1}{2}} \hat{\Sigma}_n^{-1} \Sigma^{\frac{1}{2}}- I_d} +\|\hat{\Sigma}_n^{-\frac{1}{2}}(\mu-\hat{\mu}_n)\|^2+ \opnorm{\hat{\Sigma}_n^{-1}}\cdot \opnorm{\Sigma^{\frac{1}{2}}}\|\mu-\hat{\mu}_n\|.
\end{align*}
Now since the right hand side of the above inequality is independent of $y$, we obtain
\begin{align}\label{eqinfinie} \notag
    & \displaystyle \| MHD_n - MHD \|_{\infty, \mathbb{R}^d} \\ \notag
   % & \leq \opnorm{\Sigma^{\frac{1}{2}} \hat{\Sigma}_n^{-1} \Sigma^{\frac{1}{2}}- I_d} +\|\hat{\Sigma}_n^{-\frac{1}{2}}(\mu-\hat{\mu}_n)\|^2+ \opnorm{\hat{\Sigma}_n^{-1}}\cdot \opnorm{\Sigma^{\frac{1}{2}}}\|\mu-\hat{\mu}_n\| \\ \notag
    & \leq \opnorm{\Sigma^{\frac{1}{2}} \hat{\Sigma}_n^{-1} \Sigma^{\frac{1}{2}}- I_d} + \opnorm{\hat{\Sigma}_n^{-\frac{1}{2}}}^2\|\mu- \hat{\mu}_n\|\left(\|\mu- \hat{\mu}_n\|+\opnorm{\Sigma^{\frac{1}{2}}} \right)\\
    & :=A_n(d),
\end{align}
%since $\opnorm{\hat{\Sigma}_n^{-1}}=\opnorm{\hat{\Sigma}_n^{-\frac{1}{2}}\hat{\Sigma}_n^{-\frac{1}{2}}} \leq \opnorm{\hat{\Sigma}_n^{-\frac{1}{2}}}^2.$\vspace{0.1cm}\newline
The problem reduces to studying the asymptotic behavior of  $A_n(d).$ On one hand, since $\hat{\Sigma}_n \xrightarrow[n \to \infty]{a.s} \Sigma$ and $\hat{\mu}_n \xrightarrow[n \to \infty]{a.s} \mu$, then by the continuity theorem we obtain
 $$\opnorm{\hat{\Sigma}_n^{-\frac{1}{2}}}^2\left(\|\mu- \hat{\mu}_n\|+\opnorm{\Sigma^{\frac{1}{2}}} \right) \xrightarrow[n \to \infty]{\mathbb{P}}\opnorm{\Sigma^{-\frac{1}{2}}}^2\opnorm{\Sigma^{\frac{1}{2}}} >0.$$
Furthermore, by the multivariate Central Limit theorem, it holds that $n^{\frac{1}{2}}(\hat{\mu}_n-\mu) \xrightarrow[n \to \infty]{\mathcal{L}} \mathcal{N}(0,\Sigma)$. Thus (by the continuity theorem and Slutsky's lemma), $\opnorm{\hat{\Sigma}_n^{-\frac{1}{2}}}^2\|\mu- \hat{\mu}_n\|\left(\|\mu- \hat{\mu}_n\|+\opnorm{\Sigma^{\frac{1}{2}}} \right)$ is $\mathcal{O}_{P}\left(n^{-\frac{1}{2}}\right).$
%If $d_n=o(\sqrt{n})$, then by Slutsky's Lemma, $d_n(\hat{\mu}_n-\mu) \xrightarrow[n \to \infty]{\mathcal{L}}0,$ so that $d_n(\hat{\mu}_n-\mu) \xrightarrow[n \to \infty]{\mathbb{P}}0.$ Consequently, \begin{center}
   % $d_n\opnorm{\hat{\Sigma}_n^{-\frac{1}{2}}}^2\|\mu- \hat{\mu}_n\|\left(\|\mu- \hat{\mu}_n\|+\opnorm{\Sigma^{\frac{1}{2}}} \right) \xrightarrow[n \to \infty]{\mathbb{P}}0$.
%\end{center}
On the other hand, to study the first term in $A_n(d)$, we define $$
    F : H \in \mathcal{S}_d(\mathbb{R}) \mapsto \Sigma^{\frac{1}{2}} H^{-1} \Sigma^{\frac{1}{2}},
$$ where $\mathcal{S}_d(\mathbb{R})$ is the vector space of all symmetric real-valued matrices of size $d$. We denote $\mathcal{S}_d^+(\mathbb{R})$ the set of all positive definite symmetric matrices which is an open set in $\mathcal{S}_d(\mathbb{R}).$ Using classical computations of Fréchet differentiable functions, it holds that for all $A \in \mathcal{S}_d^+(\mathbb{R})$, the differential of $F$ at $A$ is given by:  \begin{equation} \label{frechetdiffF} DF_A : H \in \mathcal{S}_d(\mathbb{R}) \mapsto DF_A(H)=-\Sigma^{\frac{
1}{2}}A^{-1}HA^{-1}\Sigma^{\frac{1}{2}}.
\end{equation}
%\begin{lemma}\label{diffferentielledeF}
%For all $A \in \mathcal{S}_d^+(\mathbb{R})$, the differential of F at $A$ is given by : $$ DF_A : H \in \mathcal{S}_d(\mathbb{R}) \mapsto DF_A(H)=-\Sigma^{\frac{
%1}{2}}A^{-1}HA^{-1}\Sigma^{\frac{1}{2}}.$$
%\end{lemma}
%\begin{proof}[Proof of Lemma \ref{diffferentielledeF}]
%Suppose that $\opnorm{H}$ is small enough so that $\opnorm{H} < %\frac{1}{\opnorm{A^{-1}}}$. In this case, we can write \begin{align*}
   % F(A+H)&=\Sigma^{\frac{1}{2}}(A+H)^{-1}\Sigma^{\frac{1}{2}} \\
      %    &=\Sigma^{\frac{1}{2}}(I_d+A^{-1}H)^{-1}A^{-1}\Sigma^{\frac{1}{2}} \\
      %    &=\Sigma^{\frac{1}{2}} \sum_{k=0}^{\infty} (-1)^k(A^{-1}H)^k %A^{-1}\Sigma^{\frac{1}{2}} \\
     %     &=\Sigma^{\frac{1}{2}}A^{-1}\Sigma^{\frac{1}{2}}-\Sigma^{\frac{1}{2}}A^{-1}%HA^{-1}\Sigma^{\frac{1}{2}} + \Sigma^{\frac{1}{2}} \sum_{k=2}^{\infty} %(-1)^k(A^{-1}H)^k A^{-1}\Sigma^{\frac{1}{2}} \\
%          &=F(A)+L(H) +o(\opnorm{H}),
%\end{align*}
%where $L(H)=-\Sigma^{\frac{1}{2}}A^{-1}HA^{-1}\Sigma^{\frac{1}{2}}$ is a linear %application. Therefore, %$DF_A(H)=-\Sigma^{\frac{1}{2}}A^{-1}HA^{-1}\Sigma^{\frac{1}{2}}.$
%\end{proof}
By isomorphism, one can see $\hat{\Sigma}_n$ as an element of $\mathbb{R}^{\frac{d(d+1)}{2}}$. Since $\bm X$ has all of its components in $\mathbb{L}^4$, then a multivariate CLT applies, i.e. there exists $M^* \in \mathcal{S}_{\frac{d(d+1)}{2}}^+(\mathbb{R})$ s.t.
\begin{equation}\label{tclmatricielsigma}
    \sqrt{n}(\hat{\Sigma}_n-\Sigma) \xrightarrow[
n \to \infty]{\mathcal{L}} \mathcal{N}(0,M^*),
\end{equation}
and for notational convenience, the gaussian vector $\mathcal{N}(0,M^*)$ could be rearranged in a size $d$ symmetric random matrix which will be denoted by $E^*.$
For the sake of completeness, we resume a proof of the delta method in the words of \citet{agresti02}  (p. 577). Using a first order Taylor expansion, for all $A \in \mathcal{S}_d^+(\mathbb{R})$ and $X \in \Boule(A,r) \subset \mathcal{S}_d^+(\mathbb{R})$, $r>0$, we can write :
\begin{align*}
    &F(X)-F(A)=DF_A(X-A)+R(X), \text{~with}\\
    &\frac{R(X)}{\opnorm{X-A}} \xrightarrow[X \to A]{}0.
\end{align*}
Then, $\mathbb{P}$-almost surely, using (\ref{frechetdiffF})
\begin{align*}
    F(\hat{\Sigma}_n)-F(\Sigma)&=DF_\Sigma(\hat{\Sigma}_n-\Sigma)+R(\hat{\Sigma}_n), \\
    &=-\Sigma^{\frac{1}{2}}\Sigma^{-1}(\hat{\Sigma}_n-\Sigma)\Sigma^{-1}\Sigma^{\frac{1}{2}} +R(\hat{\Sigma}_n) \\
    &=-\Sigma^{-\frac{1}{2}}(\hat{\Sigma}_n-\Sigma)\Sigma^{-\frac{1}{2}} +R(\hat{\Sigma}_n).
\end{align*}
This, together with (\ref{tclmatricielsigma}), using the continuity theorem and Slutsky's Lemma, we obtain \begin{center}
    $\displaystyle \sqrt{n}R(\hat{\Sigma}_n)=\sqrt{n} \opnorm{\hat{\Sigma}_n-\Sigma}\frac{R(\hat{\Sigma}_n)}{\opnorm{\hat{\Sigma}_n-\Sigma}} \xrightarrow[n \to \infty]{\mathcal{L}}0,$
\end{center} so $\sqrt{n} R(\hat{\Sigma}_n)\xrightarrow[n \to \infty]{\mathbb{P}}0.$ %and with $d_n=o(\sqrt{n})$ we deduce that $d_nR(\hat{\Sigma}_n)\xrightarrow[n \to \infty]{\mathbb{P}}0.$
In addition, from the continuity of $U \mapsto -\Sigma^{-\frac{1}{2}}U\Sigma^{-\frac{1}{2}}$ and using (\ref{tclmatricielsigma}), we obtain  $-\sqrt{n}\Sigma^{-\frac{1}{2}}(\hat{\Sigma}_n-\Sigma)\Sigma^{-\frac{1}{2}} \xrightarrow[n \to \infty]{\mathcal{L}}-\Sigma^{-\frac{1}{2}}E^*\Sigma^{-\frac{1}{2}}.$ Therefore, by continuity of the matrix norm, we deduce that $$\opnorm{\Sigma^{\frac{1}{2}} \hat{\Sigma}_n^{-1} \Sigma^{\frac{1}{2}}- I_d}=\mathcal{O}_{P}\left(n^{-\frac{1}{2}}\right).$$
To conclude, it holds that $A_n(d)=\mathcal{O}_{P}\left(n^{-\frac{1}{2}}\right)$ which implies the desired result:
  $$\| MHD_n - MHD\|_{\infty,\mathbb{R}^d}=\mathcal{O}_{P}\left(n^{-\frac{1}{2}}\right).$$
\end{proof} 
\begin{proof}[Proof of Corollary \ref{MHD_vitesseCCTE}]
Denote $D :=MHD$. Remark that the upper level sets based on $MHD$ are  ellipsoïdes in $\mathbb{R}^d$. It is sufficient to prove that under the assumptions of Corollary \ref{MHD_vitesseCCTE}, Assumption \textbf{(H1)(i)} of Corollary \ref{corSymDiff} is satisfied i.e. $\lambda_d(\mathcal{L}_{n_1}(\alpha)\Delta \mathcal{L}_{n_1}(\alpha)) =\mathcal{O}_{P}\left(v_{n_1}^{-1}\right)$ with $v_{n_1}=n_1^{\frac{1}{2}}$. The result is then a straight forward consequence of Corollary \ref{corSymDiff}. %Notice that when $n_1=n_2=n$ and $\delta=\frac{1}{2r} >0$, the rate of convergence is $d_{n}^{(1-\frac{1}{p})(1-\frac{1}{r})}$ since for $n$ large enough, $d_{n}^{1-\frac{1}{p}} \leq d_n =o(n^{\frac{1}{2}}),$ thus $d_{n}^{(1-\frac{1}{p})(1-\frac{1}{r})}=o(n^{\frac{1}{2}(1-\frac{1}{r})})$. 
We introduce : $$\ell_{n_1}=\ell_{n_1}(\alpha):=d_H(\partial \mathcal{L}_{n_1}(\alpha), \partial \mathcal{L}_D(\alpha)),$$ and the tube around the boundary $\partial \mathcal{L}_D(\alpha)$ of radius $\ell_{n_1}$ defined by
\begin{align*}
    \Tube(\partial \mathcal{L}_D(\alpha), \ell_{n_1}):=\left\lbrace z \in \mathbb{R}^d : \dist(z,\partial \mathcal{L}_D(\alpha)) \leq \ell_{n_1} \right\rbrace.
\end{align*}
Since $\partial \mathcal{L}_D(\alpha)$ is closed and $\ell_{n_1}$ is small enough ($\mathbb{P}$-a.s. for $n_1$ large enough, according to Proposition \ref{propositiondeMHDD_h} for $MHD$), then $\mathbb{P}$-a.s, (cf. \citet{Weyl39}, p. 461)
\begin{align*}
    \lambda_d(\mathcal{L}_{n_1}(\alpha)\Delta \mathcal{L}_D(\alpha))& \leq \lambda_d \left[\Tube(\partial \mathcal{L}_D(\alpha),\ell_{n_1}) \right] \\
    & \approx A_{d-1}(\alpha)\ell_{n_1} \text{~~for large~} n_1, ~ \text{(Weyl)} \\
    & \leq A_{d-1}(\alpha) \cdot C \| MHD_{n_1} -MHD\|_{\infty,\mathbb{R}^d} \text{~~for large~}n_1,
\end{align*}
and the above inequality is obtained by Proposition \ref{propositiondeMHDD_h}, where $ A_{d-1}(\alpha)$ is the $(d-1)$-dimensional volume of $\mathcal{L}_D(\alpha)$.
\begin{figure}[H]
    \centering
    \begin{tikzpicture}
    \def\firstellipse{[rotate=25](0.6,2) ellipse (2.8cm and 1.8cm)}
    \def\secondellipse{[rotate=40] (2,1.4) ellipse (2.8cm and 1.66cm)}
    
    \draw[->] (-4.5,-1.2) -- (-4.5,5.5) node[left] {$y$};
    \draw[->] (-5,-1.2) -- (5,-1.2) node[right] {$x$};
      
    \fill[even odd rule, yellow] \firstellipse \secondellipse;
    \draw[thick] \firstellipse \secondellipse;
    %\draw (0.1, 4.3) node { \small $\mathcal{L}_n(\alpha) \bigtriangleup \mathcal{L}_D(\alpha)$};
    \draw[<->][dashed,thick] (0.12, 4.05) -- (0.1, 5) node[below right] {$\ell_{n_1}$};
   
    \def\thirdellipse{[rotate=25,dashed,thick,blue](0.55,2) ellipse (1.8cm and 0.8cm)};
    \def\fourthellipse{[rotate=0.5,dashed,thick,blue](0.6,2.05) ellipse (4.1cm and 2.7cm)};
    \fill[even odd rule, blue,opacity=0.1] \thirdellipse \fourthellipse;
    \draw[thick] \thirdellipse \fourthellipse;
    
    \draw[<->][dashed,thick] (0, 1.3) -- (0.3, 0.35);
    \draw (-0.2,0.8) node {$\ell_{n_1}$};
    \draw[<->][dashed,thick,red] (1.1, 3) -- (3.2, 4.2) node[right] {$2\ell_{n_1}$};
    \draw[<->][dashed,thick] (-2.96, -0.6) -- (-2.3, 0.3);
    \draw (-3,0) node {$\ell_{n_1}$};
   % \filldraw[blue,opacity=0.1] (2.5,1.75) rectangle (3.8,2.25);
    \draw  (3, 2)  node { $\partial \mathcal{L}_D(\alpha)$};
    %\filldraw[white] (-2.5,3.75) rectangle (-1,4);
    \draw  (-2, 4.9)  node { $\partial \mathcal{L}_{n_1}(\alpha)$};
%\node[fill = white, shape = circle] (0,0) {$X$};
\end{tikzpicture}
    \caption{Illustration of $\lambda_d(\mathcal{L}_{n_1}(\alpha)\Delta \mathcal{L}_D(\alpha))$ (yellow), and the tube around $\mathcal{L}_D(\alpha)$ of radius $\ell_{n_1}$ (blue).}
\end{figure}
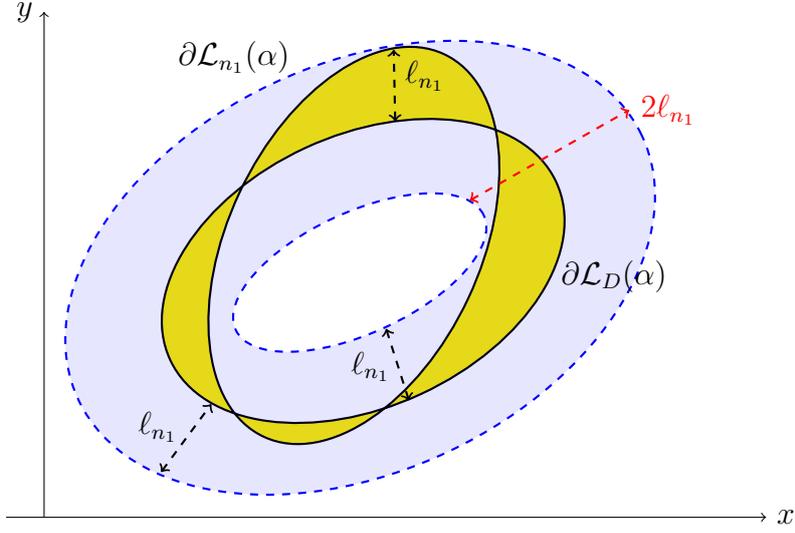
Finally, according to Theorem \ref{dnRate_MHD}, $\| MHD_{n_1} -MHD\|_{\infty,\mathbb{R}^d} = \mathcal{O}_{P}\left(n^{-\frac{1}{2}}\right).$ Hence, $$\lambda_d(\mathcal{L}_{n_1}(\alpha)\Delta \mathcal{L}_D(\alpha))=\mathcal{O}_{P}\left(n^{-\frac{1}{2}}\right).$$
\end{proof}
\newpage
\bibliographystyle{apalike} 
\bibliography{mabiblio} 
\end{document}